\newcommand{\abs}[1]{\left| #1\right|}
\newcommand{\spec}[1]{\spe(#1)}
\newcommand{\acspec}[1]{\spe_{ac}\left(#1\right)}
\newcommand{\essspec}[1]{\spe_{ess}\left(#1\right)}
\renewcommand{\epsilon}{\varepsilon}
\newcommand{\Rplus}{\mathbb{R}_{+}}
\newcommand{\Zplus}{\mathbb{Z_{+}}}
\newcommand{\1}{\mathbbm{1}}
\newcommand{\m}{\boldsymbol{m}}
\newcommand{\bbC}{\mathbb{C}}
\newcommand{\bbN}{\mathbb{N}}
\newcommand{\bbR}{\mathbb{R}}
\newcommand{\bbT}{\mathbb{T}}
\newcommand{\bbZ}{\mathbb{Z}}
\renewcommand{\kappa}{\varkappa}
\renewcommand{\phi}{\varphi}
\renewcommand{\theta}{\vartheta}
\newcommand{\calA}{\mathcal{A}}
\newcommand{\calB}{\mathcal{B}}
\newcommand{\calF}{\mathcal{F}}
\newcommand{\calH}{\mathcal{H}}
\newcommand{\calJ}{\mathcal{J}}
\newcommand{\calK}{\mathcal{K}}
\newcommand{\frakM}{\mathfrak{M}}
\newcommand{\frakS}{\mathfrak{S}}
\newcommand{\fraks}{\mathfrak{s}}
\newcommand{\sfc}{\mathsf{c}}
\newcommand{\sfn}{\mathsf{n}}
\newcommand{\ld}{\mathsf{LogDens}}
\newcommand{\uld}{\overline{\mathsf{LogDens}}}
\newcommand{\lld}{\underline{\mathsf{LogDens}}}
\newcommand{\Sp}{\mathfrak{S}_{p}}
\newcommand{\Sinfty}{\frakS_{\infty}}
\renewcommand{\to}{\rightarrow}
\newcommand{\wh}[1]{\widehat{#1}}
\newcommand{\ltwo}{\ell^{2}(\Zplus)}
\newcommand{\Ltwo}{L^{2}(\bbT)}
\newcommand{\LTwo}{L^{2}(0,1)}
\newcommand{\Gam}{\mathit{\Gamma}}
\DeclareMathOperator{\arcsech}{arcsech}
\DeclareMathOperator{\diag}{diag}
\DeclareMathOperator{\im}{Im}
\DeclareMathOperator{\M}{M}
\DeclareMathOperator{\singsupp}{sing\,supp}
\DeclareMathOperator{\supp}{supp}
\DeclareMathOperator{\Tr}{Tr}
\DeclareMathOperator{\arctanh}{arctanh}
\DeclareMathOperator{\sign}{sign}
\DeclareMathOperator{\spe}{spec}
\renewcommand{\Im}{\im}
\newcommand{\eps}{\epsilon}
\theoremstyle{plain}
\newtheorem{theorem}{Theorem}[section]
\newtheorem{lemma}[theorem]{Lemma}
\newtheorem{proposition}[theorem]{Proposition}
\newtheorem{corollary}[theorem]{Corollary}
\theoremstyle{definition}
\newtheorem{example}[theorem]{Example}
\newtheorem*{example*}{Example}
\theoremstyle{remark}
\newtheorem{remark}[theorem]{Remark}
\numberwithin{equation}{section}
\begin{document}

\title[Spectral density of Hankel operators]{The spectral density of Hankel operators with piecewise continuous symbols}
\author{Emilio Fedele}
\address{Department of Mathematics, King's College London, WC2R 2LS, London.}
\email{\href{mailto:emilio.fedele@kcl.ac.uk}{\nolinkurl{emilio.fedele@kcl.ac.uk}}}
\maketitle
\begin{abstract}
In 1966, H. Widom proved an asymptotic formula for the distribution of eigenvalues of the $N\times N$ truncated Hilbert matrix for large values of $N$. In this paper, we extend this formula to Hankel matrices with symbols in the class of piece-wise continuous functions on the unit circle. Furthermore, we show that the distribution of the eigenvalues is independent of the choice of truncation (e.g. square or triangular truncation).
\end{abstract}
\begin{section}{Introduction}
\begin{subsection}{General setting and first results.}
Given an (essentially) bounded function $\omega$, called a \textit{symbol}, on  the unit circle $ \bbT=\{v \in \bbC\ \vert\ \abs{v}=1\}$, the associated Hankel matrix, $\Gam(\wh{\omega})$, is the (bounded) operator on $\ltwo$, $\bbZ_{+}=\{0,1,2,\ldots\}$, whose ``matrix'' entries are $$\Gam(\wh{\omega})_{j,k}=\wh{\omega}(j+k),\quad j, k \geq 0,$$ where $\wh{\omega}$ denotes the sequence of Fourier coefficients $$\wh{\omega}(j)=\int_{0}^{2\pi}\omega(e^{ i \theta})e^{- i j\theta}\frac{d\theta}{2\pi},\ \ \  j \in \bbZ.$$ The matrix $\Gam(\wh{\omega})$ is always symmetric. In particular, it is self-adjoint if and only if $\wh{\omega}$ is real-valued. For instance, this is the case when $\omega$ satisfies the following symmetry condition
\begin{gather}\label{eqn:symmcond}
\omega(v)=\overline{\omega(\overline{v})},\ \ \ v \in \bbT. 
\end{gather}
In this paper, we consider symbols in the class of the piece-wise continuous functions on $\bbT$, denoted by $PC(\bbT)$, i.e. those symbols $\omega$ for which the limits 
\begin{equation}\label{eqn:limits}
    \omega(z+)=\lim_{\eps\to 0+}\omega(z e^{i \eps}),\ \ \ \omega(z-)=\lim_{\eps\to 0+}\omega(z e^{-i \eps}),
\end{equation}
 exist and are finite for all $z \in \bbT.$ The points $z \in \bbT$ for which the quantity $$\kappa_{z}(\omega)=\frac{\omega(z+)-\omega(z-)}{2}\neq 0$$ are called the \textit{jump discontinuities}  of $\omega$ and $\kappa_{z}(\omega)$ is the \textit{half-height of the jump} of the symbol at $z$. Due to the presence of jump discontinuities, Hankel matrices with these symbols are non-compact. The compactness of $\bbT$ and the existence of the limits in \eqref{eqn:limits} can be used to show that the sets $$\Omega_{s}=\{z \in \bbT \vert \abs{\kappa_{z}(\omega)}>s\}, \ \ \ s>0,$$ are finite and so the set of jump-discontinuities of $\omega$, denoted by $\Omega$, is at most countable. Furthermore, if the symbol satisfies \eqref{eqn:symmcond}, $\Omega$ is symmetric with respect to the real axis and for any $z \in \Omega$ $$\kappa_{\overline{z}}(\omega)=-\overline{\kappa_{z}(\omega)},$$ whereby we obtain that  $\abs{\kappa_{z}(\omega)}=\abs{\kappa_{\overline{z}}(\omega)}$, and at $z=\pm 1$, $\kappa_{z}(\omega)$ is purely imaginary.

Hankel matrices with piece-wise continuous symbols still attract attention in both the operator theory and spectral theory community, see for instance \cite{P-Y-Spectral,P-Y-Spec} and references therein. S. Power, \cite{Pow-Disc}, showed that the essential spectrum of such matrices consists of bands depending only on the heights of the jumps of the symbol and gave the following identity:
\begin{align}\notag
 \essspec{\Gam(\wh{\omega})}=\left[0, -i\kappa_{1}(\omega)\right]&\cup\left[0, -i\kappa_{-1}(\omega)\right]\cup\\ \label{eqn:essspec}
  &\bigcup_{z \in \Omega\backslash \{\pm 1\}}\left[-i(\kappa_{z}(\omega)\kappa_{\overline{z}}(\omega))^{1/2},i(\kappa_{z}(\omega)\kappa_{\overline{z}}(\omega))^{1/2}\right],
\end{align} 
where the notation $[a,b], a,b \in \bbC$ denotes the line segment joining $a$ and $b$. Assuming that the symbol has finitely many jumps and, say, it is Lipschitz continuous on the left and on the right of the jumps, in \cite{P-Y-Spectral}, a more detailed picture is obtained for the absolutely continuous (a.c.) spectrum of $\abs{\Gam(\wh{\omega})}=\sqrt{\Gam(\wh{\omega})^{\ast}\Gam(\wh{\omega})}$, where the following formula is obtained $$\acspec{\abs{\Gam(\wh{\omega})}}=\bigcup_{z \in \Omega}\left[0, \abs{\kappa_{z}(\omega)}\right].$$ Furthermore, it is shown that each band contributes 1 to the multiplicity of the a.c. spectrum.

\begin{example*} 
First examples of symbols fitting in this scheme are the following 
\begin{gather}\label{eqn:hilbsymb}
\gamma(e^{i\theta})=i\pi^{-1} e^{-i\theta}(\pi-\theta),\ \ \psi(e^{i\theta})=2 \1_{E}(e^{i\theta}),\quad \theta \in [0, 2\pi).
\end{gather}
where $\1_{E}$ is the characteristic function of the set $E=\{\theta \in\ [0,\, 2\pi):\,\cos\theta>0\}$. It is clear that both $ \gamma, \psi \in PC(\bbT)$, and their jumps occur at $z=1$ and $z=\pm i$ respectively and $\kappa_{1}(\gamma)=i$, $\kappa_{\pm i}(\psi)=\mp 1$. Simple integration by parts shows that $$\wh{\gamma}(j)=\frac{1}{\pi(j+1)},\ \ \wh{\psi}(j)=\frac{2\sin(\pi j/2)}{\pi j}, \quad j\geq 0,$$ with the understanding that $\wh{\psi}(0)=1$. Power's result in \eqref{eqn:essspec} in these cases gives \begin{gather}\label{eqn:hilbsequences}
    \essspec{\Gam(\wh{\gamma})}=\left[0,\, 1\right], \quad \essspec{\Gam(\wh{\psi}\, )}=\left[-1,\, 1\right].
\end{gather} The matrix $\Gam(\wh{\gamma})$, known as the Hilbert matrix, has simple a.c. spectrum coinciding with the interval $[0,1]$ and a full spectral decomposition was exhibited in \cite{Ros}.  In \cite{kostrykin2008krein}, the authors perform a more detailed spectral analysis of $\Gam(\wh{\psi})$ and show that its spectrum is purely a.c. of multiplicity one and coincides with the interval $[-1,\, 1]$.
\end{example*}

For $N\geq 1$, let $\Gam^{(N)}(\wh{\omega})$ be the $N\times N$ Hankel matrix $$\Gam^{(N)}(\wh{\omega})=\{\wh{\omega}(j+k)\}_{j,k= 0}^{N-1}.$$ We wish to give a description of the relationship between the spectrum of the infinite matrix $\Gam(\wh{\omega})$ and that of its truncation $\Gam^{(N)}(\wh{\omega})$. More specifically:
\begin{enumerate}[label=(\roman*)]
    \item for a non-self-adjoint Hankel matrix, we study the distribution of the singular values of $\Gam^{(N)}(\wh{\omega})$ inside the spectrum of $\abs{\Gam(\wh{\omega})}$;
    \item in the self-adjoint setting, we study the distribution of the eigenvalues of $\Gam^{(N)}(\wh{\omega})$ inside the spectrum of $\Gam(\wh{\omega})$.
\end{enumerate}
To do so, for a non-self-adjoint Hankel matrix $\Gam(\wh{\omega})$ we study the asymptotic behaviour of the \textit{singular-value counting function}
\begin{gather*}
\sfn(t;\Gam^{(N)}(\wh{\omega}))=\#\{n:\ s_{n}(\Gam^{(N)}(\wh{\omega}))>t\},\ \ \ t>0, 
\end{gather*}
as $N \to \infty$. Here $\{s_{n}(\Gam^{(N)}(\wh{\omega}))\}_{n\geq 1}$ is the sequence of singular values of $\Gam^{(N)}(\wh{\omega})$. In particular, we study the \textit{logarithmic spectral density} of $\abs{\Gam(\wh{\omega})}$, defined as
\begin{gather}\label{eqn:lsd}
     \ld_{\square}(t;\Gam(\wh{\omega})):=\lim_{N \to \infty}\frac{\sfn(t; \Gam^{(N)}(\wh{\omega}))}{\log(N)}.
\end{gather}
 
\noindent For a self-adjoint $\Gam(\wh{\omega})$, its spectrum, $\spec{\Gam(\wh{\omega})}$, is a subset of the real line and so we look at how the positive and negative eigenvalues of $\Gam^{(N)}(\wh{\omega})$ distribute inside $\spec{\Gam(\wh{\omega})}$. To this end, we analyze the behaviour of the \textit{eigenvalue counting functions} $$\sfn_{\pm}(t;\Gam^{(N)}(\wh{\omega}))=\#\{n:\ \lambda^{\pm}_{n}(\Gam^{(N)}(\wh{\omega}))>t\},\ \ \ t>0,$$ as $N \to \infty$. Here $\{\lambda^{\pm}_{n}(\Gam^{(N)}(\wh{\omega}))\}_{ n\geq 1}$ are the sequences of positive eigenvalues of $\pm \Gam^{(N)}(\wh{\omega})$ respectively. In this setting, we study the functions
\begin{gather}\label{eqn:lsdsa}
     \ld_{\square}^{\pm}(t;\Gam(\wh{\omega})):=\lim_{N \to \infty}\frac{\sfn_{\pm}(t; \Gam^{(N)}(\wh{\omega}))}{\log(N)}.
\end{gather}
Similarly to the non-self-adjoint setting, we call the function $\ld^{+}_{\square}$ (resp. $\ld^{-}_{\square}$) in \eqref{eqn:lsdsa} the \textit{positive} (resp. \textit{negative}) \textit{logarithmic spectral density} of $\Gam(\omega)$. 

The $\square$ appearing as an index in the definitions of the logarithimic spectral densities in \eqref{eqn:lsd} and \eqref{eqn:lsdsa} has been chosen to stress the fact that, a priori, these quantities depend on our choice to truncate the infinite matrix $\Gam(\wh{\omega})$ to its upper $N\times N$ square. Furthermore, the terminology we use for the functions $\ld_{\square}$ and $\ld^{\pm}_{\square}$ comes from the fact that we are only studying a logarithmically-small portion of the singular values (or eigenvalues) of the matrix $\Gam^{(N)}(\wh{\omega})$. Their definitions are motivated by the results obtained by Widom (see \cite[Theorem 4.3]{Widom}) for the Hilbert matrix $\Gam(\wh{\gamma})$, where he showed that
\begin{gather}\label{eqn:limit1}
\ld_{\square}(t;\Gam(\wh{\gamma}))=\mathsf{c}(t),\\\label{eqn:limit2}
\ld_{\square}^{-}(t;\Gam(\wh{\gamma}))=0,\quad \ld^{+}_{\square}(t;\Gam(\wh{\gamma}))=\mathsf{c}(t).
\end{gather}
Here $\mathsf{c}(t):=0$ whenever $t \notin (0,1)$ and 
\begin{gather}\label{eqn:hilbdensity}
\mathsf{c}(t):=\frac{1}{\pi^{2}}\arcsech(t)=\frac{1}{\pi^{2}}\log\left(\frac{1+\sqrt{1-t^{2}}}{t} \right), \quad t \in (0,1].
\end{gather}
We note that a factor of $2\pi$ is missing in the statement of \cite[Theorem 4.3]{Widom}. The aim of this paper is to extend \eqref{eqn:limit1} to a general symbol $\omega \in PC(\bbT)$. In particular, for a non-self-adjoint Hankel matrix, we aim to show that
\begin{gather}\label{eqn:poissonasym}
\ld_{\square}(t;\Gam(\wh{\omega}))=\sum_{z\in \Omega}\mathsf{c}\left( t\abs{\kappa_{z}(\omega)}^{-1}\right),
\end{gather}
where $\mathsf{c}$ is the function defined in \eqref{eqn:hilbdensity}. Recall that the symbol $\psi$ defined in \eqref{eqn:hilbsymb} has jumps at $\pm 1$ whose half-height is $\kappa_{\pm i}(\psi)=\mp 1$, so for the Hankel matrix $\Gam(\wh{\psi})$ the formula \eqref{eqn:poissonasym} yields
\begin{gather*}
\ld_{\square}(t;\Gam(\wh{\psi}))=2 \mathsf{c}\left(t\right).
\end{gather*}
 For self-adjoint Hankel matrices, we extend the result in \eqref{eqn:limit2} to symbols $\omega \in PC(\bbT)$ satisfying \eqref{eqn:symmcond} and obtain
\begin{align}\label{eqn:poissonasymsa}
\ld^{\pm}_{\square}(t;\Gam(\wh{\omega}))&=\mathsf{c}\left(t\abs{\kappa_{1}(\omega)}^{-1}\right)\1_{\pm}(-i\kappa_{1}(\omega))+\mathsf{c}\left(t\abs{\kappa_{-1}(\omega)}^{-1}\right)\1_{\pm}(-i\kappa_{-1}(\omega))\cr
&+\sum_{z\in \Omega^{+}}{\mathsf{c}\left(t\abs{\kappa_{z}(\omega)}^{-1}\right)},
\end{align}
where $\Omega^{+}=\{z\in \Omega \ \vert \Im z>0\}$, and $\1_{\pm}$ is the indicator function of  the half-line $(0, \pm \infty)$. Again, the function $\mathsf{c}$ has been defined in \eqref{eqn:hilbdensity}.  In particular, for the symbol $\psi$ in \eqref{eqn:hilbsymb}, we obtain that $$\ld^{\pm}_{\square}(t;\Gam(\wh{\psi}))=\mathsf{c}\left(t\right).$$

\noindent A natural question that we also address here is that of the universality of  the limits in \eqref{eqn:lsd} and \eqref{eqn:lsdsa}. In other words, we investigate whether they depend on the choice of ``regularisation'' of the matrix $\Gam(\wh{\omega})$. For instance, the main results of this paper, Theorems \ref{thm:sd} and \ref{thm:sdsa} below, tell us that the singular values of the matrix $\Gam^{(N)}(\wh{\omega})$ and of the regularised matrix
\begin{align}\label{eqn:poistrun}
\Gam_{N}(\wh{\omega})=\left\{e^{-\frac{j+k}{N}}\wh{\omega}(j+k)\right\}_{j,k\geq 0},\quad N\geq 1,
\end{align}
have the same distribution for large values of $N$. 
\end{subsection}
\begin{subsection}{Schur-Hadamard multipliers}
The truncation of a matrix to its finite $N\times N$ upper block and the ``matrix regularisation" in \eqref{eqn:poistrun} are examples of Schur-Hadamard multipliers, defined below.

\noindent For a bounded sequence $(\tau(j,k))_{j,k\geq 0}$, called a \textit{multiplier}, and a bounded operator $A$ on $\ltwo$, the Schur-Hadamard multiplication of $\tau$ and $A$ is the operator on $\ltwo$, $\tau\star A$, formally defined through the quadratic form
\begin{gather}\label{eqn:schurmultiplication}
((\tau\star A)e_{j}, e_{k})=\tau(j,k)(Ae_{j},e_{k}),\ \ \ j, k\geq 0,
\end{gather}
where $e_{j}$ is the $j$-th vector of the standard basis of $\ltwo$. Various authors in the literature, \cite{Bennett, birman1967stieltjes, Peller-Hankel}, have addressed the issue of establishing how properties of $\tau$ translate into the boundedness of this operation on the space of bounded operators and the Schatten classes $\Sp$ (for a definition see section 2 below). To do so, they have studied the operator norms
\begin{align}\label{eqn:norm}
\|\tau\|_{\frakM}&=\sup_{\|A\|=1}\|\tau\star A\|,\\ \label{eqn:pnorm}
\|\tau\|_{\frakM_{p}}&=\sup_{\|A\|_{\Sp}=1}\|\tau\star A\|_{\Sp},\quad 1\leq p\leq \infty.
\end{align}
Using the duality of $\Sp$-classes, it is possible to show that the following identities hold 
\begin{align}\label{eqn:duality1}
\|\tau\|_{\frakM_{1}}&=\|\tau\|_{\frakM_{\infty}}=\|\tau\|_{\frakM},\\\label{eqn:duality2}
\|\tau\|_{\frakM_{p}}&=\|\tau\|_{\frakM_{q}},\quad 1< p,\, q<\infty,\, p^{-1}+q^{-1}=1,
\end{align}
and so it is sufficient to study the boundedness of Schur-Hadamard multiplication on $\Sp$ for $p\geq 2$. The case of $p=2$ is somewhat trivial. In fact, the structure of $\frakS_{2}$ gives that any bounded sequence $\tau$ is a bounded Schur-Hadamard multiplier and furthermore $$\|\tau\|_{\frakM_{2}}=\sup_{j,k\geq 0}\abs{\tau(j,k)}.$$ For a general $1<p<\infty,\, p\neq 2$ not much is known with regards to the finiteness of $\|\tau\|_{\frakM_p}$. However, a necessary and sufficient condition for the boundedness of a Schur-Hadamard multiplier on the space of bounded operators (and, as a consequence of \eqref{eqn:duality1}, on $\frakS_{1}$ and $\Sinfty$) is known and can be found in \cite{birman2003double}. 

For the purposes of this paper, we will consider the Schur-Hadamard multiplier $\tau$ in \eqref{eqn:schurmultiplication} as the restriction to $\bbZ_{+}^{2}$ of a bounded function defined on $[0,\, \infty)^{2}$. For $N\geq 1$ set $\tau_{N}(j,k)=\tau(jN^{-1}, kN^{-1})$. If $\tau$ is such that the sequence of $\tau_{N}$ satisfies the following
\begin{equation}\label{eqn:uniform}
\sup_{N\geq 1}\|\tau_{N}\|_{\frakM}<\infty,
\end{equation}
we say that $\tau$ \textit{induces a uniformly bounded multiplier}. An easy example of such a multiplier is the $ N\times N$ truncation of an infinite matrix. To see this take the function 
\begin{align}\label{eqn:square}
\tau_{\square}(x,y)=\1_{\square}(x,y),
\end{align}
where $\1_{\square}$ is the characteristic function of the half-open unit square $[0,1)^{2}$. For any bounded operator $A$, $\tau_{N}\star A$ is the truncation to its upper $N\times N$ block and so we have that for any $N \geq 1$ $$\|\tau_{N}\|_{\frakM}= 1.$$ We discuss some more examples of Schur-Hadamard multipliers below. 
\end{subsection}
\begin{subsection}{Statement of the main results.} 
As we anticipated, our main results are not only concerned to the existence of the limits in \eqref{eqn:limit1} and \eqref{eqn:limit2}, but also with their universality. In other words, for a Hankel matrix $\Gam(\wh{\omega})$ and  a given multiplier $\tau$, we show that under some mild assumptions on $\tau$, see \ref{(A)}-\ref{(C)} below, the function
\begin{equation}
    \ld_{\tau}(t;\Gam(\wh{\omega})):=\lim_{N\to \infty}\frac{\sfn(t;\tau_{N}\star\Gam(\wh{\omega}))}{\log N},\quad t>0,
\end{equation}
is independent of the choice of $\tau$. Similarly, for a self-adjoint Hankel matrix and a multiplier $\tau$ such that $\tau(x,y)=\overline{\tau(y,x)}$, we show that the same is true for the functions
\begin{equation}
    \ld_{\tau}^{\pm}(t;\Gam(\wh{\omega})):=\lim_{N\to \infty}\frac{\sfn_{\pm}(t;\tau_{N}\star\Gam(\wh{\omega}))}{\log N},\quad t>0.
\end{equation}
Note that when $\tau=\tau_{\square}$ as in \eqref{eqn:square}, the functions $\ld_{\tau}(t;\Gam(\wh{\omega}))$ and $ \ld_{\tau}^{\pm}(t;\Gam(\wh{\omega}))$ are precisely those defined in \eqref{eqn:limit1} and \eqref{eqn:limit2}.

\noindent Let us state the following assumptions on $\tau$:
\begin{enumerate}[label=(\Alph*)]
\item\label{(A)} $\tau$ induces a uniformly bounded Schur-Hadamard multiplier, i.e. \eqref{eqn:uniform} holds;
\item\label{(B)} $\tau(0,0)=1$ and for some $\eps>0$ and some $\beta >1/2$, there exists $C_{\beta}>0$, so that  $$\abs{\tau(x, y)-1}\leq C_{\beta}\abs{\log(x+y)}^{-\beta}, \quad \forall\, 0\leq x,\,y\leq \eps;$$
\item\label{(C)} for some $\alpha>1/2$ one can find $C_{\alpha}$ so that $$\abs{\tau(x,y)}\leq C_{\alpha}\log(x+y+2)^{-\alpha},\ \ \ \forall\, x,\, y\geq 0.$$ 
\end{enumerate}
Then \eqref{eqn:poissonasym} is a particular case of the following:
\begin{theorem}\label{thm:sd}
Let $\tau$ be a multiplier satisfying \textnormal{\ref{(A)}-\ref{(C)}}. Let $\omega \in PC(\bbT)$ and $\Omega$ be the set of its discontinuities. Then
\begin{gather}\label{eqn:sdho}
\ld_{\tau}(t;\Gam(\wh{\omega}))=\sum_{z\in \Omega}\mathsf{c}\left( t\abs{\kappa_{z}(\omega)}^{-1}\right)
\end{gather}
where $\mathsf{c}(t)$ is the function defined in \eqref{eqn:hilbdensity}.
\end{theorem}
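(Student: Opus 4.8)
The plan is to localize the singular-value counting function to the individual jump discontinuities of $\omega$, to identify the contribution of each jump with a rescaled and modulated copy of Widom's Hilbert matrix, to invoke \eqref{eqn:limit1} for the latter, and finally to remove the dependence on $\tau$ by a Hilbert--Schmidt perturbation argument, which is where the logarithmic decay hypotheses \ref{(B)}--\ref{(C)} enter in an essential way. Fix $t>0$. Since $\Omega_{s}=\{z:|\kappa_{z}(\omega)|>s\}$ is finite for every $s>0$ and $\mathsf{c}$ vanishes on $[1,\infty)$, the right-hand side of \eqref{eqn:sdho} equals the finite sum $\sum_{z\in\Omega_{s}}\mathsf{c}(t|\kappa_{z}(\omega)|^{-1})$ whenever $s<t$. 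I would write $\omega=\omega_{s}+\omega_{s}'$, where $\omega_{s}=\sum_{z\in\Omega_{s}}\kappa_{z}(\omega)\,\theta_{z}$ with $\theta_{z}\in PC(\bbT)$ a fixed sawtooth-type model symbol having its only jump at $z$ and with $\kappa_{z}(\theta_{z})=1$, so that $\omega_{s}'\in PC(\bbT)$ has all jumps of half-height at most $s$. By the classical bound $\|\Gam(\wh{\eta})\|_{\mathrm{ess}}\lesssim\sup_{z}|\kappa_{z}(\eta)|$ for $\eta\in PC(\bbT)$, we have $\|\Gam(\wh{\omega_{s}'})\|_{\mathrm{ess}}\lesssim s$, so $\Gam(\wh{\omega_{s}'})=R_{s}+K_{s}$ with $\|R_{s}\|\lesssim s$ and $K_{s}$ of finite rank, hence Hilbert--Schmidt; using \ref{(A)} together with the elementary inequality $\|\tau_{N}\star K\|_{\frakS_{2}}\le\|K\|_{\frakS_{2}}\sup_{j,k}|\tau_{N}(j,k)|$ one gets $\sfn(t;\tau_{N}\star\Gam(\wh{\omega_{s}'}))=O(1)$ for $s$ small enough. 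The Ky Fan inequalities $\sfn(t_{1}+t_{2};A+B)\le\sfn(t_{1};A)+\sfn(t_{2};B)$ then reduce the theorem to the symbol $\omega_{s}$.

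\emph{Decoupling the jumps and passing to the Hilbert matrix.} For $z\neq w$ the product $\Gam(\wh{\theta_{z}})^{\ast}\Gam(\wh{\theta_{w}})$ is compact --- indeed, this product is, up to a diagonal unitary, of the form $\Gam(\wh\gamma)^{\ast}D_{\psi}\Gam(\wh\gamma)$ with $D_{\psi}=\diag(e^{-ij\psi})$ and $\psi\not\equiv0\pmod{2\pi}$, which is Hilbert--Schmidt by a summation-by-parts estimate exploiting the cancellation in $\sum_{l}e^{-il\psi}$. The same oscillation estimate should give a Hilbert--Schmidt bound on the ``cross terms'' $(\tau_{N}\star\Gam(\wh{\theta_{z}}))^{\ast}(\tau_{N}\star\Gam(\wh{\theta_{w}}))$, $z\neq w$, that is uniform in $N$; feeding this into the standard splitting lemma for the counting function yields $\sfn(t;\tau_{N}\star\Gam(\wh{\omega_{s}}))=\sum_{z\in\Omega_{s}}\sfn(t;\kappa_{z}(\omega)\,\tau_{N}\star\Gam(\wh{\theta_{z}}))+o(\log N)$. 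Writing $z=e^{i\phi}$, one may take $\theta_{z}(\zeta)=-i\gamma(\zeta e^{-i\phi})$, so that $\Gam(\wh{\theta_{z}})=-iD_{\phi}\Gam(\wh\gamma)D_{\phi}$; since $\tau_{N}\star(D_{\phi}AD_{\phi})=D_{\phi}(\tau_{N}\star A)D_{\phi}$ and $|D_{\phi}BD_{\phi}|$ is unitarily equivalent to $|B|$, the singular values of $\kappa_{z}(\omega)\,\tau_{N}\star\Gam(\wh{\theta_{z}})$ are exactly $|\kappa_{z}(\omega)|$ times those of $\tau_{N}\star\Gam(\wh\gamma)$. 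Consequently the whole theorem reduces to establishing that
\[
\ld_{\tau}(t;\Gam(\wh\gamma))=\mathsf{c}(t),\qquad t>0,
\]
for every $\tau$ obeying \ref{(A)}--\ref{(C)}.

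\emph{Universality for the Hilbert matrix.} One checks directly that $\tau_{\square}$ satisfies \ref{(A)}--\ref{(C)}, so \eqref{eqn:limit1} gives $\ld_{\tau_{\square}}(t;\Gam(\wh\gamma))=\mathsf{c}(t)$ and it remains to compare an arbitrary admissible $\tau$ with $\tau_{\square}$. I would estimate the difference $D_{N}=(\tau_{N}-\tau_{\square,N})\star\Gam(\wh\gamma)$ in the Hilbert--Schmidt norm: splitting the index set according to whether $\max(j,k)<N$, bounding $|\tau(jN^{-1},kN^{-1})-1|$ by \ref{(B)} where $jN^{-1},kN^{-1}\le\eps$ and trivially elsewhere, and bounding $|\tau(jN^{-1},kN^{-1})|$ by \ref{(C)} on the complement, one is led to sums of the shape $\sum_{1\le s\le\eps N}s^{-1}(\log(N/s))^{-2\beta}$ and $\sum_{s>\eps N}s^{-1}(\log(s/N+2))^{-2\alpha}$, whose convergence is exactly what the conditions $\beta,\alpha>\tfrac12$ provide; hence $\|D_{N}\|_{\frakS_{2}}^{2}=o(\log N)$, and therefore $\sfn(\delta;D_{N})\le\delta^{-2}\|D_{N}\|_{\frakS_{2}}^{2}=o(\log N)$ for every $\delta>0$. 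Two further applications of the Ky Fan inequality then sandwich $\sfn(t\pm\delta;\tau_{N}\star\Gam(\wh\gamma))$ between $\sfn(t;\tau_{\square,N}\star\Gam(\wh\gamma))\pm o(\log N)$; dividing by $\log N$, letting $N\to\infty$ and then $\delta\to0$, and using the continuity of $\mathsf{c}$ on $(0,\infty)$, we obtain $\ld_{\tau}(t;\Gam(\wh\gamma))=\mathsf{c}(t)$.

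\emph{The main obstacle.} The crude bound $\|\tau_{N}\star X\|\le\|\tau_{N}\|_{\frakM}\|X\|$ only yields $O(\log N)$ for the error terms, never $o(\log N)$, so the logarithmic decay encoded in \ref{(B)}--\ref{(C)} is genuinely indispensable rather than cosmetic; the exponents $>\tfrac12$ are precisely the Hilbert--Schmidt summability threshold over the $\asymp\log N$ dyadic scales that carry the singular values above $t$, and forcing the contribution of the ``outer'' scales $\max(j,k)\gtrsim N$ down to $o(\log N)$ is the technical heart of the universality step. The other delicate point is the asymptotic orthogonality used in the decoupling step: because Schur--Hadamard multiplication does not commute with operator products, one cannot simply quote the compactness of $\Gam(\wh{\theta_{z}})^{\ast}\Gam(\wh{\theta_{w}})$, and must instead establish a quantitative Schatten-norm bound on the cross terms $(\tau_{N}\star\Gam(\wh{\theta_{z}}))^{\ast}(\tau_{N}\star\Gam(\wh{\theta_{w}}))$ that survives the $N\to\infty$ limit.
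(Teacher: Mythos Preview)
Your overall architecture---strip away the small jumps, decouple the remaining ones, identify each with a modulated Hilbert matrix, and then invoke Widom together with a universality argument---is exactly the paper's strategy. The universality step you sketch is correct and is essentially the paper's Lemma~\ref{lemma:unifbound} (in fact the sums you write down are bounded uniformly in $N$, not merely $o(\log N)$, so $\|D_{N}\|_{\frakS_{2}}=O(1)$). The reduction from infinitely to finitely many jumps is also fine, though the paper does it more simply by constructing the model symbols so that the tail has small $L^{\infty}$ norm rather than small essential norm, whence $\|\tau_{N}\star\Gam(\wh{\omega'_{s}})\|\le(\sup_{N}\|\tau_{N}\|_{\frakM})\,\|\omega'_{s}\|_{\infty}$ directly via \ref{(A)}, with no splitting into a small bounded part plus a finite-rank part.

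The real gap is precisely the one you flag in your final paragraph: decoupling the jumps for a \emph{general} multiplier $\tau$. Your summation-by-parts estimate for $\Gam(\wh{\gamma})D_{\psi}\Gam(\wh{\gamma})$ is fine, but extending it to $(\tau_{N}\star\Gam(\wh{\gamma}))D_{\psi}(\tau_{N}\star\Gam(\wh{\gamma}))$ uniformly in $N$ would require control on the $l$-differences of $\tau_{N}(j,l)\tau_{N}(l,k)$, i.e.\ regularity of $\tau$ that is nowhere assumed in \ref{(A)}--\ref{(C)}. The paper sidesteps this entirely by reversing the order of your two reductions. Since the finite-jump model $\Phi=\sum_{z\in\Omega}\kappa_{z}(\omega)\gamma_{z}$ has $\Gam(\wh{\Phi})\in\calB_{0}$ (entries $O((j+k+1)^{-1})$, see \eqref{eqn:fouriercoeff}), the Invariance Principle (Theorem~\ref{thm:invprinc}) already applies to $\Gam(\wh{\Phi})$, and one switches from the given $\tau$ to the specific Abel--Poisson multiplier $\tau_{1}(x,y)=e^{-(x+y)}$ \emph{before} attempting to decouple. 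For this particular multiplier $(\tau_{1})_{N}\star\Gam(\wh{\gamma_{z}})$ is unitarily equivalent to the Hankel operator $H((\gamma_{z})_{r})$ with $r=e^{-1/N}$, so the cross terms become $H((\gamma_{z})_{r})^{\ast}H((\gamma_{w})_{r})$; these are bounded in trace norm uniformly in $r$ (Lemma~\ref{lemma:lemma4.3}\ref{item:item4.3.1}) by a smooth-cutoff and commutator argument that uses only $\singsupp\gamma_{z}\cap\singsupp\gamma_{w}=\varnothing$. In short, the obstacle you identify is genuine, but the resolution is not a harder oscillation estimate---it is to apply your own universality argument one step earlier, to $\Gam(\wh{\Phi})$ rather than to $\Gam(\wh{\gamma})$.
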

\noindent Analogously for the self-adjoint case, \eqref{eqn:poissonasymsa} is a particular case of the Theorem below:
\begin{theorem}\label{thm:sdsa}
Let $\tau$ satisfy conditions \textnormal{\ref{(A)}-\ref{(C)}} and such that $\tau(x, y)=\overline{\tau(y,x)}.$ Suppose $\omega \in PC(\bbT)$ satisfies \eqref{eqn:symmcond} and let $\Omega^{+}=\{z \in \Omega\, \vert\, \im z>0\}$. Then 
\begin{align}\label{eqn:sdsa}
\ld_{\tau}^{\pm}(t;\Gam(\wh{\omega}))&=\sum_{z\in \Omega^{+}}{\mathsf{c}\left(t\abs{\kappa_{z}(\omega)}^{-1}\right)}\cr
&+\mathsf{c}\left(t\abs{\kappa_{1}(\omega)}^{-1}\right)\1_{\pm}(-i\kappa_{1}(\omega))\cr
&+\mathsf{c}\left(t\abs{\kappa_{-1}(\omega)}^{-1}\right)\1_{\pm}(-i\kappa_{-1}(\omega)),
\end{align}
where $\mathsf{c}(t)$ is the function defined in \eqref{eqn:hilbdensity} and $\1_{\pm}$ is the characteristic function of the half-line $(0, \pm \infty)$.
\end{theorem}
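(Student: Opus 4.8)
The plan is to deduce Theorem~\ref{thm:sdsa} from Theorem~\ref{thm:sd} by resolving each single-jump contribution into its positive and negative part. Write $\Lambda^{\pm}(t)$ for the right-hand side of \eqref{eqn:sdsa}. Since $\tau(x,y)=\overline{\tau(y,x)}$ and $\wh{\omega}$ is real-valued, $\tau_{N}\star\Gam(\wh{\omega})$ is self-adjoint, so its singular values are the moduli of its eigenvalues and $\sfn(t;\tau_{N}\star\Gam(\wh{\omega}))=\sfn_{+}(t;\tau_{N}\star\Gam(\wh{\omega}))+\sfn_{-}(t;\tau_{N}\star\Gam(\wh{\omega}))$ for every $N$. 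The symmetry of $\Omega$ gives the elementary identity $\Lambda^{+}(t)+\Lambda^{-}(t)=\sum_{z\in\Omega}\mathsf{c}(t\abs{\kappa_{z}(\omega)}^{-1})=:L(t)$: one pairs $\Omega^{+}$ with its conjugate via $\abs{\kappa_{z}(\omega)}=\abs{\kappa_{\overline{z}}(\omega)}$, while for $p=\pm1$, $\kappa_{p}(\omega)$ is purely imaginary and vanishes exactly when $p\notin\Omega$, so that $\1_{+}(-i\kappa_{p}(\omega))+\1_{-}(-i\kappa_{p}(\omega))$ equals $1$ if $p\in\Omega$ and $0$ otherwise. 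By Theorem~\ref{thm:sd}, $\sfn(t;\tau_{N}\star\Gam(\wh{\omega}))/\log N\to L(t)$; hence, if one establishes the two upper bounds $\limsup_{N}\sfn_{\pm}(t;\tau_{N}\star\Gam(\wh{\omega}))/\log N\leq\Lambda^{\pm}(t)$, then
\[
\liminf_{N\to\infty}\frac{\sfn_{+}(t;\tau_{N}\star\Gam(\wh{\omega}))}{\log N}=L(t)-\limsup_{N\to\infty}\frac{\sfn_{-}(t;\tau_{N}\star\Gam(\wh{\omega}))}{\log N}\geq L(t)-\Lambda^{-}(t)=\Lambda^{+}(t),
\]
and symmetrically for the negative part, so both limits exist and equal $\Lambda^{\pm}(t)$. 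Everything therefore reduces to these two upper bounds.

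For the upper bounds one localizes at the jumps, as in the proof of Theorem~\ref{thm:sd}. Fix $t>0$ and a small $s\in(0,t)$, put $S=\{z\in\Omega:\abs{\kappa_{z}(\omega)}>s\}$ (finite, invariant under conjugation) and $S^{+}=S\cap\Omega^{+}$, and split the symbol additively and compatibly with \eqref{eqn:symmcond} as
\[
\omega=\omega_{c}+\sum_{z\in S^{+}}\bigl(\omega_{z}+\omega_{\overline{z}}\bigr)+\sum_{p\in S\cap\{\pm1\}}\omega_{p},
\]
where $\omega_{z}$ is a canonical single jump at $z$ with $\kappa_{z}(\omega_{z})=\kappa_{z}(\omega)$, $\omega_{\overline{z}}(v):=\overline{\omega_{z}(\overline{v})}$, one takes $\omega_{p}=(-i\kappa_{p}(\omega))\,\gamma_{p}$ at $p=\pm1$ with $\gamma_{1}=\gamma$ (from \eqref{eqn:hilbsymb}) and $\gamma_{-1}(v)=\gamma(-v)$, and the remainder $\omega_{c}$ has all its jumps of half-height $\leq s$, so that by \eqref{eqn:essspec}, $\Gam(\wh{\omega_{c}})$ is a finite-rank perturbation of a self-adjoint operator of norm $O(s)$. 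By linearity of $\Gam$ and of the Schur product,
\[
\tau_{N}\star\Gam(\wh{\omega})=\sum_{z\in S^{+}}\tau_{N}\star T_{z}+\sum_{p}\tau_{N}\star\Gam(\wh{\omega_{p}})+\tau_{N}\star\Gam(\wh{\omega_{c}}),\qquad T_{z}:=\Gam(\wh{\omega_{z}+\omega_{\overline{z}}}),
\]
and, using \ref{(A)} together with $\|\tau_{N}\|_{\frakM_{2}}=\sup\abs{\tau_{N}}<\infty$, the term $\tau_{N}\star\Gam(\wh{\omega_{c}})$ has $\sfn(u;\cdot)=O(1)$ for each fixed $u>0$ once $s$ is small. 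It then remains (i) to combine the finitely many model summands at the scale $\log N$, and (ii) to compute their one-sided logarithmic densities.

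Step (ii) is where Theorem~\ref{thm:sd} and conditions \ref{(B)}--\ref{(C)} enter. At $p=\pm1$, $\Gam(\wh{\omega_{p}})=(-i\kappa_{p}(\omega))\Gam(\wh{\gamma_{p}})$ is a real multiple of $\Gam(\wh{\gamma_{p}})$, a unitary conjugate of the positive Hilbert matrix $\Gam(\wh{\gamma})$ (the conjugating unitary being $\diag((-1)^{j})$ when $p=-1$); hence $\Gam(\wh{\omega_{p}})$ is a definite operator whose sign $\epsilon_{p}\in\{+,-\}$ is that of $-i\kappa_{p}(\omega)$, and its truncation $\Gam^{(N)}(\wh{\omega_{p}})\oplus 0$ has the same sign. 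With $\sigma=\tau-\1_{\square}$, so that $\sigma_{N}\star\Gam(\wh{\omega_{p}})=\tau_{N}\star\Gam(\wh{\omega_{p}})-\bigl(\Gam^{(N)}(\wh{\omega_{p}})\oplus 0\bigr)$, the Weyl inequality for the addition of a definite operator gives $\sfn_{-\epsilon_{p}}(u;\tau_{N}\star\Gam(\wh{\omega_{p}}))\leq\sfn(u;\sigma_{N}\star\Gam(\wh{\omega_{p}}))$; and a direct Hilbert--Schmidt estimate — splitting indices into $\{j,k<N\}$ and $\{\max(j,k)\geq N\}$, and using $\abs{\wh{\omega_{p}}(m)}\leq C(m+1)^{-1}$ with \ref{(B)} where $(j+k)/N$ is small and \ref{(C)} where it is not — shows $\sup_{N}\|\sigma_{N}\star\Gam(\wh{\omega_{p}})\|_{\frakS_{2}}<\infty$, the relevant series $\sum_{m}m^{-1}\abs{\log(m/N)}^{-2\beta}$ and $\sum_{m}m^{-1}(\log(m/N+2))^{-2\alpha}$ being bounded uniformly in $N$ precisely because $\beta,\alpha>1/2$. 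Hence $\sfn_{-\epsilon_{p}}(u;\tau_{N}\star\Gam(\wh{\omega_{p}}))=O(1)$, while $\sfn_{\epsilon_{p}}(u;\cdot)\leq\sfn(u;\cdot)$, so by Theorem~\ref{thm:sd} (which gives $\ld_{\tau}(u;\Gam(\wh{\omega_{p}}))=\mathsf{c}(u\abs{\kappa_{p}(\omega)}^{-1})$) one gets $\limsup_{N}\sfn_{\pm}(u;\tau_{N}\star\Gam(\wh{\omega_{p}}))/\log N\leq\mathsf{c}(u\abs{\kappa_{p}(\omega)}^{-1})\,\1_{\pm}(-i\kappa_{p}(\omega))$. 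For the pair operator $T_{z}$, the relation $\kappa_{\overline{z}}(\omega)=-\overline{\kappa_{z}(\omega)}$ makes its essential spectrum the symmetric band $[-\abs{\kappa_{z}(\omega)},\abs{\kappa_{z}(\omega)}]$ (by \eqref{eqn:essspec}), and the resonance analysis of Theorem~\ref{thm:sd}, rerun for the self-adjoint $T_{z}$ while keeping track of signs, shows that $\tau_{N}\star T_{z}$ has equal positive and negative logarithmic densities; since $\ld_{\tau}(u;T_{z})=2\mathsf{c}(u\abs{\kappa_{z}(\omega)}^{-1})$ by Theorem~\ref{thm:sd}, each equals $\mathsf{c}(u\abs{\kappa_{z}(\omega)}^{-1})$.

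The genuinely hard point is step (i): one must combine the model summands without degrading the threshold $t$. A naive Weyl--Ky Fan estimate only bounds $\sfn_{\pm}\bigl(t;\sum_{z}\tau_{N}\star T_{z}+\sum_{p}\tau_{N}\star\Gam(\wh{\omega_{p}})\bigr)$ by $\sum_{z}\sfn_{\pm}(t_{z};\tau_{N}\star T_{z})+\sum_{p}\sfn_{\pm}(t_{p};\tau_{N}\star\Gam(\wh{\omega_{p}}))$ with $\sum t_{z}+\sum t_{p}<t$, and since $\mathsf{c}$ is decreasing this replaces each $\mathsf{c}(t\abs{\kappa_{z}(\omega)}^{-1})$ by the strictly larger $\mathsf{c}(t_{z}\abs{\kappa_{z}(\omega)}^{-1})$ — the wrong direction. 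One must instead show that the model operators attached to distinct jump points are asymptotically orthogonal at the scale $\log N$, i.e. that the products $(\tau_{N}\star T_{z})(\tau_{N}\star T_{z'})$ for $z\neq z'$ (and the analogous products involving the $\Gam(\wh{\omega_{p}})$) have singular-value counting functions that are $o(\log N)$. Unlike $T_{z}T_{z'}$ — which is compact by Power's calculus, the jump sets being disjoint — these are not Schur multiples of a single Hankel product, so a new quantitative estimate is needed, upgrading that compactness to a Schatten bound that survives multiplication by $\tau_{N}$: this is exactly where conditions \ref{(A)}--\ref{(C)} and the thresholds $\alpha,\beta>1/2$ become indispensable, and it is the same mechanism that drives the proof of Theorem~\ref{thm:sd}. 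Once it is in place, Steps (i)--(ii) and the reduction of the first paragraph complete the proof.
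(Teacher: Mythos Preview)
Your overall strategy --- establish only the upper bounds $\uld_\tau^\pm(t;\Gam(\wh{\omega}))\leq\Lambda^\pm(t)$ and then recover the matching lower bounds from $\sfn=\sfn_++\sfn_-$ together with Theorem~\ref{thm:sd} --- is a valid and rather elegant reduction, and it does differ from the paper, which bounds $\sfn_\pm$ from both sides directly via Theorems~\ref{thm:thm2.2} and~\ref{thm:thm2.3}. The handling of the sign-definite pieces at $p=\pm1$ is fine (you are essentially re-proving Lemma~\ref{lemma:unifbound} for the difference $\tau-\1_\square$), and you correctly identify that the combination in Step~(i) is resolved by the same almost-orthogonality mechanism (Lemma~\ref{lemma:lemma4.3}\ref{item:item4.3.1} and Theorem~\ref{thm:thm2.2}, after passing to the Poisson multiplier via the Invariance Principle) that already drives Theorem~\ref{thm:sd}. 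Note, though, that once you invoke that machinery its self-adjoint version \eqref{eqn:uldinvpm}--\eqref{eqn:lldinvpm} gives both bounds at once, so your detour through Theorem~\ref{thm:sd} does not actually shorten the argument.

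There is, however, a genuine gap in your treatment of the pair operator $T_z=\Gam(\wh{\omega_z+\omega_{\overline z}})$. You need $\uld_\tau^\pm(u;T_z)\leq\mathsf c(u\abs{\kappa_z(\omega)}^{-1})$, i.e.\ that positive and negative eigenvalues of $\tau_N\star T_z$ carry equal logarithmic weight. Knowing from Theorem~\ref{thm:sd} that their \emph{sum} has density $2\mathsf c$ does not give this: nothing rules out one sign carrying $\tfrac32\mathsf c$ and the other $\tfrac12\mathsf c$. The symmetry of the essential spectrum you cite via~\eqref{eqn:essspec} is a qualitative statement about $T_z$ itself and says nothing about the sign distribution of eigenvalues of the \emph{truncations} $\tau_N\star T_z$; and ``rerun the resonance analysis keeping track of signs'' is not a mechanism --- the proof of Theorem~\ref{thm:sd} never separates signs. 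What is missing is the Symmetry Principle: the paper exhibits a concrete unitary $\fraks$ (multiplication by $\sign(\Im v)$) that anti-commutes with $H((\omega_z+\omega_{\overline z})_r)$ modulo trace class \emph{uniformly in $r$} (Lemma~\ref{lemma:lemma4.3}\ref{item:item4.3.2}), and then Theorem~\ref{thm:thm2.3} converts this into $\uld_\tau^+ = \uld_\tau^-$ at continuity points. You need this ingredient (or an equivalent anti-unitary symmetry), and it is precisely the new idea that distinguishes Theorem~\ref{thm:sdsa} from Theorem~\ref{thm:sd}. A minor point: the conclusion $\sfn(u;\tau_N\star\Gam(\wh{\omega_c}))=O(1)$ follows from Lemma~\ref{lemma:compactperturbations} applied to the finite-rank part together with the small-norm bound, not from the $\frakM_2$-norm you invoke.
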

\end{subsection}
\begin{subsection}{Remarks}
\begin{enumerate}[label=(\Alph*)]
    \item It is clear that Theorems \ref{thm:sd} and \ref{thm:sdsa} generalise the result of Widom in \cite{Widom} mentioned earlier in \eqref{eqn:hilbdensity} to any multiplier $\tau$ and, in both instances, we only describe the behaviour of a logarithmically small portion of the spectrum of $\tau_{N}\star\Gam(\wh{\omega})$ as most of the points lie in a vicinity of 0.
    \item Both Theorems \ref{thm:sd} and \ref{thm:sdsa} deal with a rather general class of symbols and for this reason we cannot say more about the error term in the asymptotic expansion of the functions $\sfn, \sfn_{\pm}$. In fact, we can only write $$\sfn(t; \tau_{N}\star\Gam(\wh{\omega}))=\log(N)\sum_{z \in \Omega}\mathsf{c}(t\abs{\kappa_{z}(\omega)}^{-1})+o(\log(N)),\quad N\to \infty.$$ If, however, we were to restrict our attention to those symbols with finitely many jumps and some degree of smoothness away from them (say Lipschitz continuity), we would obtain a more precise estimate,  see \cite{FedeleGebert}, however the trade-off would be that of making our results less general.
    \item Studying the spectral density of operators is common to many areas of spectral analysis. In particular, our results can be put in parallel to well-known results in the spectral theory of Schr\"odinger operators, where the existence and universality of the density of states is a well-studied problem for a wide class of potentials, see \cite{cycon2009schrodinger} and \cite[Section 5]{kirsch2007invitation} for an introduction and references therein for more on this subject. 
    \item Both Theorems \ref{thm:sd} and \ref{thm:sdsa} assume that the multiplier $\tau$ induces a uniformly bounded multiplier on the space of bounded operators. However, this condition can be substantially weakened in two different ways.
    
    \noindent Firstly, we can weaken assumption \ref{(A)} on the multiplier $\tau$ by assuming that  for some finite $p>1$, $\tau$ induces a uniformly bounded Schur-Hadamard multiplier on $\Sp$, or in other words that
    \begin{equation}\label{eqn:spuniform}
        \sup_{N\geq 1}\|\tau_{N}\|_{\frakM_{p}}=\sup_{N\geq 1}\left(\sup_{\|A\|_{\Sp}=1}\|\tau_{N}\star A\|_{\Sp}\right)<\infty.
    \end{equation}
    However, as a trade-off, we need to impose more stringent conditions on the symbol, as the following statement shows:
    \begin{proposition}\label{prop:schattenld}
    Suppose $\tau$ satisfies \eqref{eqn:spuniform} as well as Assumptions \ref{(B)} and \ref{(C)}. If the symbol $\omega$ can be written as 
    \begin{gather}\label{eqn:basicrep}
    \omega(v)=-i\sum_{z \in \Omega}\kappa_{z}(\omega)\gamma(\overline{z}v)+\eta(v),\quad v\in\bbT,
    \end{gather}
    where $\Omega$ is a finite subset of $\bbT$, $\gamma$ is the symbol in \eqref{eqn:hilbsymb} and $\eta$ is a symbol for which $\Gam(\wh{\eta}) \in \Sp$, then \eqref{eqn:sdho} holds. Furthermore, if $\tau(x,y)=\overline{\tau(y,x)}$ and $\omega$ also satisfies \eqref{eqn:symmcond}, then \eqref{eqn:sdsa} holds.
    \end{proposition}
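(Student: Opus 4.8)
The plan is to reduce Proposition \ref{prop:schattenld} to the already-known case $\tau=\tau_\square$ by a perturbation argument that plays off the representation \eqref{eqn:basicrep} against the Schatten-class hypothesis on $\tau$. First I would dispose of the smooth part: since $\Gam(\wh\eta)\in\Sp$ and $\tau$ induces a \emph{uniformly} bounded multiplier on $\Sp$ by \eqref{eqn:spuniform}, the operators $\tau_N\star\Gam(\wh\eta)$ have $\Sp$-norms bounded uniformly in $N$; in particular their singular values satisfy $s_n(\tau_N\star\Gam(\wh\eta))\le C n^{-1/p}$ uniformly, so for any fixed $t>0$ the counting function $\sfn(t;\tau_N\star\Gam(\wh\eta))$ is bounded by a constant independent of $N$, hence is $o(\log N)$. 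By the Ky Fan inequalities for the counting function (i.e. $\sfn(t_1+t_2;A+B)\le\sfn(t_1;A)+\sfn(t_2;B)$), splitting $\omega$ as in \eqref{eqn:basicrep} reduces the computation of $\ld_\tau(t;\Gam(\wh\omega))$ to that of $\ld_\tau\big(t;\Gam(\wh\omega_0)\big)$ where $\omega_0(v)=-i\sum_{z\in\Omega}\kappa_z(\omega)\gamma(\bar z v)$ is the ``model'' piecewise-continuous symbol with the same jumps.

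Next I would handle the model symbol. Because $\Omega$ is finite, $\Gam(\wh\omega_0)=-i\sum_{z\in\Omega}\kappa_z(\omega)\,\Gam\big(\wh{\gamma(\bar z\,\cdot)}\big)$ is a finite linear combination of ``rotated'' Hilbert matrices. I expect the Fourier coefficients of $\gamma(\bar z\,\cdot)$ to be $z^{-j}\wh\gamma(j)=z^{-j}/(\pi(j+1))$, so $\Gam\big(\wh{\gamma(\bar z\,\cdot)}\big)_{j,k}=z^{-(j+k)}/(\pi(j+k+1))$, i.e. a diagonal unitary conjugation of the Hilbert matrix combined with the same conjugation on the multiplier: $\tau_N\star\Gam\big(\wh{\gamma(\bar z\,\cdot)}\big)=U_z^*\big(\tilde\tau_N\star\Gam(\wh\gamma)\big)U_z$ where $\tilde\tau(x,y)=\tau(x,y)$ after absorbing the unimodular factor (which drops out of the $|\tau(x,y)-1|$ and $|\tau(x,y)|$ bounds, so $\tilde\tau$ still satisfies \ref{(B)},\ref{(C)}). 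This shows each rotated block is unitarily equivalent to a $\tau$-multiplied Hilbert matrix, whose spectral density is $\mathsf c(t|\kappa_z(\omega)|^{-1})$ by the $\tau$-version of Widom's theorem (which I would invoke as the one-jump case of Theorem \ref{thm:sd}, or prove directly for $\gamma$ exactly as in the body of the paper). The distinct jumps decouple asymptotically: for $z\ne z'$ the cross-terms live on ``far apart'' frequency scales and one shows, again via Ky Fan and a localization/separation-of-scales estimate, that $\sfn(t;\sum_z A_z^{(N)})=\sum_z\sfn(t;A_z^{(N)})+o(\log N)$, so the densities add.

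For the self-adjoint statement I would run the same argument, but track signs. Under \eqref{eqn:symmcond} the jumps pair up as $z,\bar z$ with $\kappa_{\bar z}=-\overline{\kappa_z}$, and for $z\in\Omega^+$ the pair $\{z,\bar z\}$ contributes a block unitarily equivalent to $\mathsf a\otimes(\text{sign-indefinite Hilbert-type matrix})$ whose positive and negative eigenvalue counting functions each grow like $\mathsf c(t|\kappa_z(\omega)|^{-1})\log N$ — this is exactly the content of the $\ld_\square^-(t;\Gam(\wh\gamma))=0$, $\ld_\square^+(t;\Gam(\wh\gamma))=\mathsf c(t)$ dichotomy in \eqref{eqn:limit2}, which says a single Hilbert block is asymptotically \emph{positive}. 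At the fixed points $z=\pm1$, $\kappa_z(\omega)$ is purely imaginary and the corresponding block is a single (real, sign-definite) Hilbert matrix scaled by $-i\kappa_{\pm1}(\omega)$, so it contributes to $\ld_\tau^+$ or $\ld_\tau^-$ according to the sign of $-i\kappa_{\pm1}(\omega)$, which is the role of $\1_\pm$ in \eqref{eqn:sdsa}; one must also check that $\tau(x,y)=\overline{\tau(y,x)}$ is preserved under the diagonal conjugation, which it is since the conjugating factors are unimodular and conjugate-symmetric.

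\medskip
\noindent\textbf{Main obstacle.} The crux is the asymptotic \emph{decoupling} of distinct jumps: proving $\sfn\big(t;\sum_{z}A_z^{(N)}\big)=\sum_z\sfn\big(t;A_z^{(N)}\big)+o(\log N)$ despite the off-diagonal cross terms not being Schatten-small. The Ky Fan inequalities only give $\le$ up to an arbitrarily small loss in $t$ if one first shows the cross terms $U_{z'}^*(\cdots)U_z$ have $o(\log N)$-many singular values above any $t>0$; this oscillatory-cancellation estimate — essentially that $\{(z/z')^{j+k}/(\pi(j+k+1))\}_{j,k<N}$ has sub-logarithmic spectral density because $z/z'\ne1$ kills the slowly-decaying diagonal — is the technical heart, and I would expect it to be handled either by a stationary-phase/summation-by-parts bound on its Schatten-$p$ norm or by comparison with a genuinely compact operator, exactly as the $\pm1$-versus-generic-$z$ distinction is treated elsewhere in the paper.
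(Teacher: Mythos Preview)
Your treatment of the smooth part $\eta$ is correct and matches the paper: the uniform $\Sp$-boundedness~\eqref{eqn:spuniform} gives $\sup_N\|\tau_N\star\Gam(\wh\eta)\|_{\Sp}<\infty$, whence $\sfn(t;\tau_N\star\Gam(\wh\eta))=O_t(1)$ by Lemma~\ref{lemma:lemma2.1}, and Weyl's inequality reduces the problem to the model symbol $\Phi(v)=-i\sum_{z\in\Omega}\kappa_z(\omega)\gamma(\bar z v)$.

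Where you diverge is in the decoupling of distinct jumps, which you correctly flag as the crux but then propose to attack \emph{for the given $\tau$} via a summation-by-parts or stationary-phase bound on the cross terms. This is missing the key shortcut. Once $\eta$ is gone, the remaining operator $\Gam(\wh\Phi)$ has entries $O((j+k+1)^{-1})$, i.e.\ $\Gam(\wh\Phi)\in\calB_0$, and the Invariance Principle (Theorem~\ref{thm:invprinc}) requires \emph{only} Assumptions~\ref{(B)} and~\ref{(C)} --- neither~\ref{(A)} nor its $\Sp$-weakening~\eqref{eqn:spuniform}. So one may immediately swap $\tau$ for the Abel--Poisson multiplier $\tau_1(x,y)=e^{-(x+y)}$ at no cost. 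For $\tau_1$ the decoupling is already established in the paper: Lemma~\ref{lemma:lemma4.3}\ref{item:item4.3.1} gives $\sup_N\|\Gam_N(\wh\gamma_z)^*\Gam_N(\wh\gamma_w)\|_{\frakS_1}<\infty$ for $z\ne w$ (via Poisson-kernel localisation of singular supports, not oscillatory cancellation), and Theorem~\ref{thm:thm2.2} converts this into additivity of the densities. The self-adjoint case goes the same way through Lemma~\ref{lemma:lemma4.3}\ref{item:item4.3.2} and Theorem~\ref{thm:thm2.3}.

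In short, the paper's proof is literally ``repeat Step~1 of Theorems~\ref{thm:sd} and~\ref{thm:sdsa}; the only change is that \eqref{eqn:spuniform} together with Lemma~\ref{lemma:lemma2.1} replaces Lemma~\ref{lemma:compactperturbations} when discarding $\eta$.'' The $\Sp$-hypothesis on $\tau$ is used \emph{solely} for $\eta$; after that the Invariance Principle takes over and nothing new is needed. Your direct attack on the cross terms for general $\tau$ is a harder problem than the one actually at hand. (A minor aside: in your unitary-conjugation step there is no need to modify $\tau$ to a $\tilde\tau$; the unimodular factors $z^{-j}$, $z^{-k}$ are carried by the diagonal unitaries $U_z$ and the multiplier is untouched --- this is exactly Corollary~\ref{cor:cor3.3}.)
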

    \noindent Secondly, we can assume that $\tau$ only induces a uniformly bounded Schur-Hadamard multiplier on the space bounded Hankel matrices, i.e. that
    \begin{equation}\label{eqn:hankeluniform}
        \sup_{N\geq 1}\left(\sup_{\|\Gam(\wh{\omega})\|=1}\|\tau_{N}\star \Gam(\wh{\omega})\|\right)<\infty.
    \end{equation}
     In this case, Theorems \ref{thm:sd} and \ref{thm:sdsa} still hold in their generality and we have the following
    \begin{proposition}\label{prop:hankelld}
     Let $\omega \in PC(\bbT)$ and let $\tau$ satisfy \eqref{eqn:hankeluniform} as well as Assumptions \ref{(B)} and \ref{(C)}. Then \eqref{eqn:sdho} holds. Furthermore, if $\omega$ satisfies the symmetry condition \eqref{eqn:symmcond} and $\tau(x,y)=\overline{\tau(y,x)}$, then \eqref{eqn:sdsa} holds.
    \end{proposition}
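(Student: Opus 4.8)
The plan is to run the proof of Theorem~\ref{thm:sd} essentially unchanged, checking that wherever the multiplier enters, Assumption~\ref{(A)} can be replaced by the weaker hypothesis \eqref{eqn:hankeluniform}. The structural reason this should succeed is that every reduction in that proof is carried out at the level of \emph{symbols}, and both $g\mapsto\Gam(\wh{g})$ and $A\mapsto\tau_N\star A$ are linear; moreover $\tau_N\star(DAD)=D(\tau_N\star A)D$ for any diagonal unitary $D$, so a rotated Hilbert matrix $D_z\Gam(\wh{\gamma})D_z$ (with $D_z=\diag(z^{-j})_{j\ge 0}$), which is the Hankel matrix with the shifted Hilbert symbol $\gamma(\overline{z}v)$, is --- for the purpose of counting singular values after Schur multiplication --- interchangeable with the plain Hilbert matrix $\Gam(\wh{\gamma})$. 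Consequently $\tau_N$ is only ever Schur-multiplied against Hankel matrices (and against differences of Hankel matrices appearing in the Ky~Fan comparisons $\sfn(s+t;A+B)\le\sfn(s;A)+\sfn(t;B)$), so the only constant controlling the argument is $C:=\sup_{N\ge 1}\sup_{\|\Gam(\wh{g})\|=1}\|\tau_N\star\Gam(\wh{g})\|$, which is finite by \eqref{eqn:hankeluniform}.

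The single assertion in the proof of Theorem~\ref{thm:sd} that, a priori, needs more than \eqref{eqn:hankeluniform} is that a compact Hankel perturbation is $\tau$-negligible: if $\Gam(\wh{\eta})$ is compact then $\sfn(t;\tau_N\star\Gam(\wh{\eta}))=o(\log N)$ for every $t>0$. I would prove the stronger fact that $\tau_N\star\Gam(\wh{\eta})\to\Gam(\wh{\eta})$ in operator norm. By Hartman's theorem we may assume $\eta\in C(\bbT)$; let $\eta_m$ be its Fej\'er means, so that $\Gam(\wh{\eta_m})$ is finite rank (its matrix is supported on $\{(j,k):j+k\le m\}$) and $\|\Gam(\wh{\eta-\eta_m})\|\le\|\eta-\eta_m\|_{L^{\infty}(\bbT)}\to 0$. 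Writing $\1$ for the all-ones multiplier (so that $\1\star A=A$), the operator $(\tau_N-\1)\star\Gam(\wh{\eta_m})$ is finite rank with matrix entries $\tau_N(j,k)-1=\tau(j/N,k/N)-1$ tending to $0$ as $N\to\infty$ by~\ref{(B)}, hence it tends to $0$ in norm; and $\|(\tau_N-\1)\star\Gam(\wh{\eta-\eta_m})\|\le(C+1)\|\Gam(\wh{\eta-\eta_m})\|$ by \eqref{eqn:hankeluniform}. Letting $N\to\infty$ and then $m\to\infty$ gives the claimed norm convergence. Since $\Gam(\wh{\eta})$ is compact, $M:=\sfn(t/2;\Gam(\wh{\eta}))<\infty$ and $s_{M+1}(\Gam(\wh{\eta}))\le t/2$; Weyl's inequality then yields $s_{M+1}(\tau_N\star\Gam(\wh{\eta}))\le t$ for $N$ large, so $\sfn(t;\tau_N\star\Gam(\wh{\eta}))\le M$ eventually, i.e.\ it is $O(1)$.

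Granting this, I would assemble the proof exactly as for Theorem~\ref{thm:sd}. Fix $s>0$; since $\Omega_s$ is finite, decompose $\omega=\omega_0+\eta_1+\eta_2$ with $\omega_0=-i\sum_{z\in\Omega_s}\kappa_z(\omega)\gamma(\overline{z}v)$, with $\Gam(\wh{\eta_1})$ compact, and with $\eta_2$ carrying the remaining jumps localised to pairwise disjoint arcs so that $\|\Gam(\wh{\eta_2})\|\le\|\eta_2\|_{L^{\infty}(\bbT)}\lesssim s$. For the main term, $\ld_\tau(t;\Gam(\wh{\omega_0}))=\sum_{z\in\Omega_s}\mathsf{c}(t\abs{\kappa_z(\omega)}^{-1})$ --- the base case of the proof of Theorem~\ref{thm:sd}, which by the structural remark above uses only \eqref{eqn:hankeluniform}. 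Combining the three pieces via the Ky~Fan inequalities, using $\sfn(t;\tau_N\star\Gam(\wh{\eta_1}))=O(1)$ and $\sfn(t;\tau_N\star\Gam(\wh{\eta_2}))=0$ once $\|\Gam(\wh{\eta_2})\|<t/C$, then dividing by $\log N$, letting $N\to\infty$ and afterwards $s\to 0$, and using the continuity of $\mathsf{c}$ together with the fact that for fixed $t>0$ the sum $\sum_{z\in\Omega}\mathsf{c}(t\abs{\kappa_z(\omega)}^{-1})$ is finite (its nonzero terms are those $z$ with $\abs{\kappa_z(\omega)}>t$), gives \eqref{eqn:sdho}. For the self-adjoint statement, the hypothesis $\tau(x,y)=\overline{\tau(y,x)}$ makes $\tau_N\star\Gam(\wh{\omega})$ self-adjoint for every $N$; one chooses the decomposition so that each summand also satisfies \eqref{eqn:symmcond} --- possible since $\gamma$ satisfies \eqref{eqn:symmcond} and $\Omega,\Omega_s$ are invariant under conjugation with $\kappa_{\overline{z}}(\omega)=-\overline{\kappa_z(\omega)}$ --- and runs the identical scheme with $\sfn$ replaced by $\sfn_{\pm}$, the positive and negative spectral subspaces, and the Ky~Fan inequalities for eigenvalue counting functions, to obtain \eqref{eqn:sdsa}.

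The main obstacle is the claim invoked above that the base case --- the computation of $\ld_\tau(t;\Gam(\wh{\omega_0}))$ for $\omega_0$ a finite sum of rotated Hilbert symbols --- genuinely proceeds with $\tau_N$ applied only to Hankel matrices. The $\tau$-independence for a single Hilbert matrix, i.e.\ that $\ld_\tau(t;\Gam(\wh{\gamma}))=\mathsf{c}(t)$ for every admissible $\tau$, should be safe: it follows by showing that $(\tau_N-\tau_{\square,N})\star\Gam(\wh{\gamma})$ has counting function $o(\log N)$, a Hankel-input estimate that uses \ref{(B)} and \ref{(C)}. The delicate point is the jump-separation, i.e.\ that the densities attached to the individual rotated Hilbert matrices simply add up; if this is implemented in the proof of Theorem~\ref{thm:sd} by compressing to invariant subspaces or to spectral projections of $\abs{\Gam(\wh{\omega_0})}$, those operators are not Hankel and their Schur multipliers would require \ref{(A)}. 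To stay within \eqref{eqn:hankeluniform} one must instead realise the separation by multiplying the symbol $\omega_0$ by smooth, disjointly supported bump functions $\chi_z$ (with $\chi_z\equiv 1$ near $z$), so that $\Gam(\wh{\omega_0})=\sum_{z}\Gam(\wh{\chi_z\omega_0})+(\text{compact Hankel})$ with $\Gam(\wh{\chi_z\omega_0})=-i\kappa_z(\omega)D_z\Gam(\wh{\gamma})D_z+(\text{compact Hankel})$, and then carry out the asymptotic orthogonality of the $\Gam(\wh{\chi_z\omega_0})$ (driven by the oscillation of the twists $D_z^{*}D_{z'}$ for distinct jumps) at the level of these Hankel operators, absorbing all non-Hankel errors into compact perturbations handled by the lemma of the second paragraph. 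Making this rearrangement precise while preserving the $o(\log N)$ control of the errors is where the substantive work lies; granting it, Proposition~\ref{prop:hankelld} follows.
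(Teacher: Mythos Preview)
Your approach is correct and is precisely the paper's: one reruns the proofs of Theorems~\ref{thm:sd} and~\ref{thm:sdsa}, observing that $\tau_N$ is only ever Schur-multiplied against Hankel matrices, so \eqref{eqn:hankeluniform} suffices in place of~\ref{(A)}. Your explicit reproof of Lemma~\ref{lemma:compactperturbations} for compact Hankel inputs via Fej\'er approximation is the one detail worth spelling out, and your handling of Step~2 (the small-jump tail controlled by $\|\tau_N\star\Gam(\wh{\Phi-\Phi_M})\|\le C\|\Phi-\Phi_M\|_\infty$) is also right.

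The obstacle you identify in your last paragraph, however, is not real. In the proof of Theorem~\ref{thm:sd}, once the compact Hankel error $\Gam(\wh{\eta})$ is discarded, the very next step is the Invariance Principle (Theorem~\ref{thm:invprinc}), applied to $\Gam(\wh{\Phi})\in\calB_0$, which replaces $\tau$ by the Poisson multiplier $\tau_1(x,y)=e^{-(x+y)}$. That theorem and its supporting Lemma~\ref{lemma:unifbound} use only Assumptions~\ref{(B)} and~\ref{(C)}; they never invoke~\ref{(A)} nor \eqref{eqn:hankeluniform}. Everything downstream --- the almost-orthogonality of Theorem~\ref{thm:thm2.2}, verified through Lemma~\ref{lemma:lemma4.3}, and in the self-adjoint case Theorem~\ref{thm:thm2.3} --- is carried out with $\tau_1$, which is factorisable and hence satisfies~\ref{(A)} outright. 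The block-matrix and spectral-projection manipulations you worry about therefore never see $\tau$ at all, and your proposed bump-function rearrangement is unnecessary.
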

    \noindent We chose to make use of Assumption \ref{(A)} instead of \eqref{eqn:spuniform} and \eqref{eqn:hankeluniform}, because there are no known necessary and sufficient conditions for a multiplier to satisfy either of them. We give specific examples of multipliers that satisfy these conditions below. 
    
\end{enumerate}
\end{subsection}
\begin{subsection}{Some Examples of Schur-Hadamard multipliers}
\begin{example}[Factorisable multipliers]\label{example1}
If the function $\tau$ can be factorised as $$\tau(x,y)=f(x)g(y),\quad x,y\geq 0,$$ for some bounded function $f, g$, then it is easy to see that it induces a uniformly bounded Schur-Hadamard multiplier in the sense of \eqref{eqn:uniform}, and furthermore $$\sup_{N\geq 1}\|\tau_{N}\|_{\frakM}\leq\|f\|_{\infty}\|g\|_{\infty}.$$ As it was pointed out earlier in \eqref{eqn:square}, the truncation to the upper $N\times N$ square is an example of such a multiplier. Another example is given by choosing the function $\tau_{1}(x, y)=e^{-(x+y)}=e^{-x}e^{-y}$. This induces the regularisation in \eqref{eqn:poistrun} and it is immediate to see that $$\sup_{N\geq 1}\|(\tau_{1})_{N}\|_{\frakM}=1.$$ Furthermore, $\tau_{1}$ satisfies the assumptions \ref{(B)} and \ref{(C)} and so Theorems \ref{thm:sd} and \ref{thm:sdsa} hold.
\end{example}
\begin{example}[Non-examples]\label{example2} In stark contrast to the square truncation in \eqref{eqn:square}, the so-called ``main triangle projection" induced by the function 
\begin{equation}\label{eqn:Dirichlet}
\tau_{2}(x,y)=\1_{[0,1)}\left(x+y\right)
\end{equation}
is not uniformly bounded on the bounded operators, see \cite{A-C-N, Bonami-Bruna, K-P}, where it was shown that $$\sup_{\|A\|=1}\|(\tau_{2})_{N}\star A\|= \pi^{-1}\log(N)+o(\log(N)),\ \ \ N \to \infty.$$ However, $\tau_{2}$ is uniformly bounded on any Schatten class $\Sp,\, 1<p<\infty$, see \cite{birman2003double}, and so Proposition \ref{prop:schattenld} holds.

\noindent Proposition \ref{prop:hankelld} shows that Theorems \ref{thm:sd} and \ref{thm:sdsa} still hold in the case that the Schur-Hadamard multiplier is only uniformly bounded on the set of bounded Hankel matrices. An example of such a multiplier is given by the indicator function, $\tau_{\beta, \gamma}$, of the region $$\Xi_{\beta, \gamma}=\{(x, y)\in [0,1]^{2}\ \vert\ x\leq -\beta y+ \gamma\}, \quad \beta,\, \gamma \in \bbR.$$ Even though $\tau_{\beta, \gamma}$ does not induce, in general, a uniformly bounded Schur-Hadamard multiplier, it has been shown in \cite[Theorem 1(a)]{Bonami-Bruna} that this is the case on the set of bounded Hankel matrices for $\beta \neq 1, 0$ and any $\gamma$ (at $\beta=1$ and $\gamma=1$, $\tau_{1, 1}$ reduces to the multiplier $\tau_{2}$ considered above). With this at hand, an appropriate choice of the parameters $\beta$ and $\gamma$ gives \eqref{eqn:sdho} and \eqref{eqn:sdsa}.
\end{example}
\begin{example}[General Criterion]
For more complicated functions, the following criterion can be of help. Let $\Sigma \subset \bbR$ and $m$ be a measure on $\Sigma$. Suppose that for the function $\tau$ we can write $$\tau_{N}(j,k)=\tau\left(\frac{j}{N}, \frac{k}{N}\right)=\int_{\Sigma}e^{-it(j+k)}f_{N}(t)dm(t),\quad \forall j,\, k\geq 0$$ for some functions $f_{N} \in L^{1}(\Sigma, m)$ so that $\sup_{N\geq 1}\|f_{N}\|_{L^{1}(\Sigma, m)}<\infty$. It is not hard to check that $\tau$ induces a uniformly bounded Schur-Hadamard multiplier and that $$\sup_{N\geq 1}\|\tau_{N}\|_{\frakM}\leq \sup_{N\geq 1}\|f_{N}\|_{L^{1}(\Sigma, m)}.$$ Using this it is possible to show that the function
\begin{gather}\label{eqn:cesaro}
\tau_{3}(x,y)=\left(1-(x+y)\right)\1_{[0,1)}\left(x+y\right),
\end{gather}
induces a uniformly bounded Schur-Hadamard multiplier, since one has the following representation
\begin{gather}\label{eqn:fejer}
\tau_{3}(j N^{-1},k N^{-1})=\int_{0}^{2\pi}e^{-i t \left(j+k\right)}F_{N}(e^{it})\frac{dt}{2\pi},\quad j,\, k\geq 0,
\end{gather}
where $F_{N}$ in \eqref{eqn:fejer} denotes the $N$-th Fej\'er kernel.

\noindent The multipliers induced by the functions 
\begin{gather*}
\tau_{1}(x,y)=e^{-(x+y)},\\
\tau_{2}(x,y)=\1_{[0,1)}(x+y),\\
\tau_{3}(x,y)=\left(1-(x+y)\right)\1_{[0,1)}\left(x+y\right)
\end{gather*}
are related to the Abel-Poisson, Dirichlet and Cesaro summation methods respectively and share some of the properties of the operators of convolution with the respective kernels, see Section 3 for more on the Poisson kernel.
\end{example}
\end{subsection}
\begin{subsection}{Outline of the proofs}
To prove Theorems \ref{thm:sd} and \ref{thm:sdsa} we use a similar approach to the one in \cite{Pow-Disc} and \cite{P-Y-Localization} and combine abstract results concerned with the general properties of the functions $\sfn, \sfn_{\pm}$ (see Section 2) and more hands-on function theoretic ones that are specific to the theory of Hankel matrices, see Section 3. 

To prove Theorem \ref{thm:sd}, we firstly assume that the set of jump-discontinuities, $\Omega$, of the symbol $\omega$ is finite and we write
\begin{gather}\label{eqn:rep1}
    \omega(v)=-i\sum_{z \in \Omega}\kappa_{z}(\omega)\gamma(\overline{z}v)+\eta(v),\quad v\in\bbT,
\end{gather}
where $\gamma$ is the symbol in \eqref{eqn:hilbsymb} and $\eta$ is a continuous function on $\bbT$. The analysis of $\ld_{\tau}(t;\Gam(\wh{\omega}))$ then proceeds with the study of each summand appearing in \eqref{eqn:rep1} and the interactions this has with all the others. In particular, Assumption \ref{(A)} allows us to disregard the contribution coming from the matrix $\Gam(\wh{\eta})$, i.e. it gives that $$\ld_{\tau}(t;\Gam(\wh{\omega}))=\ld_{\tau}\left(t;\sum_{z\in \Omega}\kappa_{z}(\omega)\Gam(\wh{\gamma}_{z})\right),$$ where $\gamma_{z}(v)=-i\gamma(\overline{z}v),\, v \in \bbT$.
 The invariance of the functions $\ld_{\tau}$ with respect to the choice of multiplier, proved in Theorem \ref{thm:invprinc}, gives that $$\ld_{\tau}\left(t;\sum_{z\in \Omega}\kappa_{z}(\omega)\Gam(\wh{\gamma}_{z})\right)=\ld_{\tau_{1}}\left(t;\sum_{z\in \Omega}\kappa_{z}(\omega)\Gam(\wh{\gamma}_{z})\right)$$ where the multiplier $\sigma_{1}(x,y)=e^{-(x+y)}$ is given in the Example \ref{example1} above, and it is shown to induce the regularisation in \eqref{eqn:poistrun}, i.e. $(\tau_{1})_{N}\star\Gam(\wh{\omega})=\Gam_{N}(\wh{\omega})$. For the multiplier $\tau_{1}$, we explicitly show that the operators $\Gam(\wh{\gamma}_{z})$ are mutually ``almost orthogonal" in the sense that if $z\neq w \in \Omega$, then both  $$\Gam_{N}(\wh{\gamma}_{z})^{\ast}\Gam_{N}(\wh{\gamma}_{w}),\quad \Gam_{N}(\wh{\gamma}_{z})\Gam_{N}(\wh{\gamma}_{w})^{\ast}$$ are trace-class. From here, Theorem \ref{thm:thm2.2}, gives that each jump contributes independently, or in other words that we can write $$\ld_{\tau_{1}}\left(t;\sum_{z\in \Omega}\kappa_{z}(\omega)\Gam(\wh{\gamma}_{z})\right)=\sum_{z \in \Omega}\ld_{\tau_{1}}(t;\kappa_{z}(\omega)\Gam(\wh{\gamma}_{z})).$$ We note here that the above is another instance of the general fact that jumps occurring at different points of the unit circle contribute independently to the spectral properties of the operator $\Gam(\wh{\omega})$. For this reason, we follow the terminology used by the authors of \cite{P-Y-Localization} and we refer to this fact as the \textit{``Localisation Principle"}.
 
 \noindent Finally, using once again the Invariance Principle, Theorem \ref{thm:invprinc}, and the result of Widom in \eqref{eqn:hilbdensity}, we obtain the identity  \eqref{eqn:poissonasym} for a symbol $\omega$ with finitely many jumps.

The proof of Theorem \ref{thm:sdsa} roughly follows the same outline. However, instead of writing the symbol $\omega$ as in \eqref{eqn:rep1}, we make use of the symmetry of the set of jump-discontinuities, $\Omega$, to decompose it as follows
\begin{gather}
    \omega(v)=\kappa_{1}\gamma_{1}(v)+\kappa_{-1}\gamma_{-1}(v)-i\sum_{z \in \Omega^{+}}{\left(\kappa_{z}(\omega)\gamma_{z}(v)+\kappa_{\overline{z}}(\omega)\gamma_{\overline{z}}(v)\right)}+\eta(v),\quad v \in \bbT,
\end{gather}
where $\Omega^{+}=\{z \in \Omega\ \vert\ \im{z}>0\}$ and, as before, $\gamma_{z}(v)=-i\gamma(\overline{z}v)$ and $\eta$ is a continuous symbol on $\bbT$. The same strategy used in the proof of Theorem \ref{thm:sd} leads to the following identity 
\begin{align*}
        \ld_{\tau}^{\pm}(t;\Gam(\wh{\omega}))&=\ld_{\tau_{1}}^{\pm}(t;\kappa_{1}(\omega)\Gam(\wh{\gamma}_{1}))+\ld_{\tau_{1}}^{\pm}(t;\kappa_{-1}(\omega)\Gam(\wh{\gamma}_{-1}))\\
                                             &+\sum_{z \in \Omega^{+}}\ld_{\tau_{1}}^{\pm}(t;\kappa_{z}(\omega)\Gam(\wh{\gamma}_{z})+\kappa_{\overline{z}}(\omega)\Gam(\wh{\gamma}_{\overline{z}})).
\end{align*}
The fact that the jumps of $\omega$ are arranged symmetrically around $\bbT$ can be used to show that the positive and negative eigenvalues of the compact operator $$\kappa_{z}(\omega)\Gam_{N}(\wh{\gamma}_{z})+\kappa_{\overline{z}}(\omega)\Gam_{N}(\wh{\gamma}_{\overline{z}})$$ are arranged almost symmetrically around 0, in a sense that we will specify in Lemma \ref{lemma:lemma4.3}-\ref{item:item4.3.2}. Using Theorem \ref{thm:thm2.3}, we conclude that 
\begin{gather}\label{eqn:symm}
   \ld_{\tau_{1}}^{\pm}(t;\kappa_{z}(\omega)\Gam(\wh{\gamma}_{z})+\kappa_{\overline{z}}(\omega)\Gam(\wh{\gamma}_{\overline{z}}))=\ld_{\tau_{1}}(t;\kappa_{z}(\omega)\Gam(\wh{\gamma}_{z})). 
\end{gather}
 Using once again the result of Widom in \eqref{eqn:hilbdensity}, we arrive at \eqref{eqn:sdsa}. It is worth noting here that \eqref{eqn:symm} shows that if $\omega$ has jumps occurring at a pair of complex conjugate points, then the upper and lower logarithmic spectral densities, $\ld^{\pm}(t, \Gam(\wh{\omega}))$, contribute equally to the logarithmic spectral density of $\abs{\Gam(\wh{\omega})}$, we refer to this as the \textit{``Symmetry Principle"}, following the terminology used by the authors of \cite{P-Y-Asymptotics}.

Both Theorem \ref{thm:sd} and \ref{thm:sdsa} are then extended to the case of a symbol with infinitely-many jump-discontinuities using an approximation argument first presented by Power in \cite{Pow-Disc} and subsequently in \cite[Ch. 10, Thm. 1.10]{Peller}, see Section 4 below. 
\end{subsection}
\end{section}
\begin{section}{Abstract properties of the spectral density}
\begin{subsection}{First definitions and results}
Let $\Sinfty$ denote the ideal of compact operators. For any $p>0$, $\Sp$ denotes the ideal of compact operators whose singular values are $p$-summable and let $\frakS_{0}=\cap_{p>0}\Sp$.  For $p\geq 1$, the $\Sp$-norm is defined as 
 \begin{gather*}
 \|A\|_{\Sp}^{p}=\sum_{n=1}^{\infty}s_{n}(A)^{p},\quad 1\leq p<\infty.\\
 \|A\|_{\Sinfty}=\sup_{n}s_{n}(A),\quad p=\infty.
 \end{gather*}
 Here $\{s_{n}(A)\}_{n=1}^{\infty}$ is the sequence of singular values of $A$ ordered in a decreasing manner with multiplicities taken into account. All operators in this section are bounded operators acting on the space of square summable sequence $\ltwo$.

The functions $\sfn, \sfn_{\pm}$ were defined in the Introduction. It is clear that $\sfn(t; A)=\sfn(t; A^{\ast}),$ as the non-zero singular values of $A$ and $A^{\ast}$ coincide and, furthermore one has 
\begin{gather}\label{eqn:eqn2.3}
\sfn(t; A)=\sfn(t^{2}; A^{\ast}A), \quad  t>0.
\end{gather}
For any self-adjoint operator $A$, the functions $\sfn$ and $\sfn_{\pm}$ are linked via the following: $$\sfn(t;A)=\sfn_{+}(t;A)+\sfn_{-}(t;A), \quad  t>0.$$
The singular-value counting function of $K \in \Sp,$ satisfies the following simple estimate:
\begin{lemma}\label{lemma:lemma2.1}
Let $K \in \Sp$, $1\leq p< \infty$, then, for any $t>0$ one has $$\sfn(t; K)\leq \frac{\|K\|_{\frakS_p}^{p}}{t^{p}}.$$ If $K$ is self-adjoint, the same holds for the functions $\sfn_{\pm}(t;K)$.
\end{lemma}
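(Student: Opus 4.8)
The plan is to read off the bound directly from the monotone ordering of the singular values, i.e.\ to run a Chebyshev-type argument. Set $n_{0}=\sfn(t;K)$. Since the singular values $s_{1}(K)\geq s_{2}(K)\geq\cdots$ are listed in decreasing order (with multiplicities), the definition of $\sfn$ says exactly that $s_{n}(K)>t$ for every $1\leq n\leq n_{0}$. Hence
$$\|K\|_{\Sp}^{p}=\sum_{n=1}^{\infty}s_{n}(K)^{p}\geq\sum_{n=1}^{n_{0}}s_{n}(K)^{p}>n_{0}\,t^{p}=\sfn(t;K)\,t^{p},$$
and dividing by $t^{p}>0$ gives the claim. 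Note that $K\in\Sp$ forces $s_{n}(K)\to 0$, so $n_{0}<\infty$ and the finite sum above makes sense; if $n_{0}=0$ the inequality is trivial.

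For the self-adjoint case I would reduce to the estimate just proved. When $K=K^{\ast}$, the spectral theorem gives $s_{n}(K)=\abs{\lambda_{n}(K)}$, so the positive eigenvalues of $\pm K$ are among the singular values of $K$. Consequently the index set $\{n:\lambda_{n}^{\pm}(K)>t\}$ embeds into $\{n:s_{n}(K)>t\}$, whence $\sfn_{\pm}(t;K)\leq\sfn(t;K)$. Combining this with the bound $\sfn(t;K)\leq\|K\|_{\Sp}^{p}/t^{p}$ already established yields the second assertion. (Alternatively one could repeat the displayed computation verbatim with the $n_{0}$ largest positive eigenvalues of $\pm K$ in place of the $s_{n}(K)$.)

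I do not anticipate any genuine obstacle: the statement is just the operator-theoretic packaging of the elementary fact that a sequence in $\ell^{p}$ can have at most $\|\cdot\|_{\ell^{p}}^{p}/t^{p}$ entries exceeding $t$. The only point worth spelling out is the passage from $\sfn_{\pm}$ to $\sfn$ in the self-adjoint case, which uses nothing beyond $s_{n}(K)=\abs{\lambda_{n}(K)}$.
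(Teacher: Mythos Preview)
Your argument is correct: it is precisely the Chebyshev-type estimate one expects, and the self-adjoint reduction via $s_n(K)=\abs{\lambda_n(K)}$ is fine. The paper itself states this lemma without proof, treating it as standard, so there is nothing further to compare.
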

We will also use the following inequalities, known as Weyl's inequalitites, see \cite[Thm. 9, Ch. 9]{BirSol1}:
\begin{lemma}[\textbf{Weyl Inequality}]\label{lemma:weyl} Let $A, B$ be compact operators and $0<s<t$, then
\begin{gather}\label{eqn:weyl}
\sfn(t;A+B)\leq \sfn(t-s;A)+\sfn(s;B),\\\label{eqn:weylsa}
\sfn_{\pm}(t;A+B)\leq \sfn_{\pm}(t-s;A)+\sfn_{\pm}(s;B),
\end{gather}
with the last inequality holding for self-adjoint operators.  
\end{lemma}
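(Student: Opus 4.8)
The plan is to deduce both inequalities from the classical Weyl inequalities for the singular values (resp.\ eigenvalues) of a sum of compact operators, namely $s_{j+k-1}(A+B)\le s_j(A)+s_k(B)$ and, in the self-adjoint case, $\lambda_{j+k-1}^{\pm}(A+B)\le \lambda_j^{\pm}(A)+\lambda_k^{\pm}(B)$, which are exactly the statements recorded in \cite[Ch.~9]{BirSol1}. Granting these, \eqref{eqn:weyl} reduces to a short counting argument: I would set $m=\sfn(t-s;A)$ and $\ell=\sfn(s;B)$, so that by the very definition of $\sfn$ one has $s_{m+1}(A)\le t-s$ and $s_{\ell+1}(B)\le s$. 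Then
\[
s_{m+\ell+1}(A+B)=s_{(m+1)+(\ell+1)-1}(A+B)\le s_{m+1}(A)+s_{\ell+1}(B)\le (t-s)+s=t,
\]
so that $s_n(A+B)\le t$ for every $n\ge m+\ell+1$, which says precisely $\sfn(t;A+B)\le m+\ell=\sfn(t-s;A)+\sfn(s;B)$. The self-adjoint inequality \eqref{eqn:weylsa} follows verbatim with $s_n$ replaced by $\lambda_n^{+}$ and $\sfn$ by $\sfn_{+}$, using the eigenvalue form of Weyl's inequality; the case of $\sfn_{-}$ is then obtained by applying the $\sfn_{+}$ statement to $-A$ and $-B$.

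Alternatively, and avoiding the need to invoke two separate forms of the classical inequality, one can argue variationally. For a compact operator $A$ and $t>0$, $\sfn(t;A)$ equals $\dim\operatorname{Ran}E_{|A|}((t,\infty))$; on this finite-dimensional subspace $\|Ax\|>t\|x\|$ for every nonzero $x$ (strictness uses that, $A$ being compact and $t>0$, only finitely many singular values exceed $t$, whence a genuine gap), while $\operatorname{Ran}E_{|A|}([0,t])$ is a subspace of codimension $\sfn(t;A)$ on which $\|Ax\|\le t\|x\|$. Applying this to $A+B$ at level $t$, to $A$ at level $t-s$ and to $B$ at level $s$, and writing $L_0=\operatorname{Ran}E_{|A+B|}((t,\infty))$, $M_A=\operatorname{Ran}E_{|A|}([0,t-s])$, $M_B=\operatorname{Ran}E_{|B|}([0,s])$, the subspace $M_A\cap M_B$ has codimension at most $\sfn(t-s;A)+\sfn(s;B)$; hence if $\sfn(t;A+B)=\dim L_0$ exceeded this sum there would be a nonzero $x\in L_0\cap M_A\cap M_B$, and then $t\|x\|<\|(A+B)x\|\le\|Ax\|+\|Bx\|\le (t-s)\|x\|+s\|x\|=t\|x\|$, a contradiction. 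The self-adjoint case is identical, with $E_{|A|}$ replaced by the spectral projections $E_A((t,\infty))$, $E_A((-\infty,t-s])$ and the norm estimates replaced by the corresponding bounds on $\langle A\,\cdot\,,\cdot\rangle$.

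This is a classical fact and I do not expect a real obstacle; the only points needing a little care are the strictness of $\|Ax\|>t\|x\|$ on the top spectral subspace — where compactness and the hypotheses $t>0$ and $t-s>0$ are used — and the elementary bookkeeping showing $\operatorname{codim}(M_A\cap M_B)\le\operatorname{codim}M_A+\operatorname{codim}M_B$. Both are routine.
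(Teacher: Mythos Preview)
Your argument is correct. The paper does not actually supply a proof of this lemma: it merely records the inequalities and refers the reader to \cite[Thm.~9, Ch.~9]{BirSol1}. Your first derivation, deducing the counting-function bounds from the classical singular-value (resp.\ eigenvalue) Weyl inequalities $s_{j+k-1}(A+B)\le s_j(A)+s_k(B)$ via the choice $m=\sfn(t-s;A)$, $\ell=\sfn(s;B)$, is exactly the standard way to pass from those pointwise inequalities to \eqref{eqn:weyl}--\eqref{eqn:weylsa}, and the alternative variational proof is also fine.
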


For a bounded  $\tau$ on $[0, \infty)^{2}$ we have already defined in the Introduction the meaning of $\tau_{N}\star A$. We have the following simple
\begin{lemma}\label{lemma:convergence}
Let $\tau$ be continuous at $(0,0)$ with $\tau(0,0)=1$ and suppose it satisfies Assumption \ref{(A)}. Then for any bounded operator $A$, $\tau_{N}\star A\to A$ as $N \to \infty$ in the strong operator topology. Furthermore, if $A$ is compact, the same is true in the operator norm.
\end{lemma}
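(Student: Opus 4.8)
The plan is to reduce the statement to the (trivial) behaviour of $\tau_N\star\cdot$ on matrices with finite support, and to use Assumption \ref{(A)} only to absorb everything outside a fixed finite block.

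First I would dispose of the case in which $B$ is a matrix supported on a finite set $F\subset\Zplus^{2}$ of index pairs. Then $\tau_N\star B$ is the matrix with entries $\tau(j/N,k/N)B_{jk}$ on $F$, so, using that the operator norm is dominated by the $\frakS_{2}$-norm (the Hilbert--Schmidt norm),
\[
\|\tau_N\star B-B\|\le\|\tau_N\star B-B\|_{\frakS_{2}}=\Big(\sum_{(j,k)\in F}\big|\tau(j/N,k/N)-1\big|^{2}|B_{jk}|^{2}\Big)^{1/2},
\]
which tends to $0$ as $N\to\infty$ because the sum is finite and each factor $|\tau(j/N,k/N)-1|$ tends to $0$ by continuity of $\tau$ at $(0,0)$ together with $\tau(0,0)=1$. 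Hence $\tau_N\star B\to B$ in operator norm for every finitely supported $B$.

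Next, for an arbitrary bounded $A$, set $M:=\sup_{N\ge1}\|\tau_N\|_{\frakM}<\infty$ by Assumption \ref{(A)}, so that $\|\tau_N\star C\|\le M\|C\|$ for all bounded $C$ and all $N$; in particular the operators $\tau_N\star A$ are uniformly bounded. Since the linear span of $\{e_j:j\ge0\}$ is dense in $\ltwo$, to obtain $\tau_N\star A\to A$ in the strong operator topology it suffices to verify $(\tau_N\star A)e_j\to Ae_j$ for each fixed $j$. For this I would write
\[
\|(\tau_N\star A-A)e_j\|^{2}=\sum_{k\ge0}\big|\tau(j/N,k/N)-1\big|^{2}\,|(Ae_j,e_k)|^{2}
\]
and apply dominated convergence with respect to the counting measure in $k$: for fixed $j$ each summand tends to $0$ as $N\to\infty$, and it is bounded, uniformly in $N$, by $(\|\tau\|_{\infty}+1)^{2}|(Ae_j,e_k)|^{2}$, a summable sequence. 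This gives $\|(\tau_N\star A-A)e_j\|\to0$, hence the strong convergence.

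Finally, for the norm statement when $A$ is compact, I would invoke the standard fact that $P_nAP_n\to A$ in operator norm, where $P_n$ is the orthogonal projection onto the span of $\{e_0,\dots,e_{n-1}\}$. Given $\epsilon>0$, fix $n$ with $\|A-P_nAP_n\|<\epsilon$ and put $B:=P_nAP_n$, which is finitely supported. Then
\[
\|\tau_N\star A-A\|\le\|\tau_N\star(A-B)\|+\|\tau_N\star B-B\|+\|B-A\|\le(M+1)\epsilon+\|\tau_N\star B-B\|,
\]
and the last term tends to $0$ by the first step; letting $N\to\infty$ and then $\epsilon\to0$ completes the proof. The only genuine obstacle is that finitely supported matrices are not norm-dense in the bounded operators (a finite-rank operator need not have finite support), so an arbitrary bounded $A$ cannot be approximated in norm; this is exactly why the general case only yields strong convergence --- routed through the basis vectors and a dominated-convergence estimate --- while the norm conclusion genuinely needs the compactness of $A$ to supply the approximation $P_nAP_n\to A$, with Assumption \ref{(A)} providing the uniform bound that absorbs the approximation error.
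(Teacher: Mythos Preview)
Your proof is correct and follows essentially the same approach as the paper: verify convergence on basis vectors (equivalently, on finitely supported matrices) using continuity of $\tau$ at the origin, extend to strong convergence via the uniform bound from Assumption~\ref{(A)} and density, and then handle the compact case by a finite-matrix approximation plus the same triangle-inequality estimate. The only differences are organizational---you isolate the finitely supported case first and make the dominated-convergence step for $(\tau_N\star A)e_j\to Ae_j$ explicit, whereas the paper calls this ``a simple calculation'' and packages the extension as an $\epsilon/3$ argument.
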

\begin{proof}[Proof of Lemma]
Recall that Assumption \ref{(A)} implies that for any operator $A$ one has 
\begin{gather}\label{eqn:normestimate1}
\|\tau_{N}\star A\|\leq \sup_{N\geq 1}\|\tau_{N}\|_{\frakM}\|A\|.
\end{gather}
Let $e_{j},\, j\geq 0$ be the standard basis vectors of $\ltwo$. Using continuity of $\tau$ at $(0,0)$ and the fact that $\tau(0,0)=1$, a simple calculation shows $(\tau_{N}\star A)e_{j}\to Ae_{j}$ in $\ltwo$ and so we obtain that $(\tau_{N}\star A)x\to A x$ as $N \to \infty$ for any finite sequence $x \in \ltwo$. 

For any $x \in \ltwo$, the result follows from a standard $\eps/3$ argument. In particular, for $\eps>0$, we find a finite sequence $x_{\eps}$ so that $\|x-x_{\eps}\|_{2}<\eps/3$. Using the triangle inequality and \eqref{eqn:normestimate1}, together with the fact that $(\tau_{N}\star A)x_{\eps}\to A x_{\eps}$ we obtain the assertion.

If $A$ is compact, we have that for any given $\eps>0$ we can find a finite matrix $B$ so that $\|A-B\|<\eps$. For any finite matrix $B$, the convergence $\tau_{N}\star B\to B$ in the strong operator topology implies convergence in the operator norm, so for $N$ large we have $\|(\tau_{N}\star B)- B\|<\eps$. The triangle inequality now yields
\begin{align*}
\|(\tau_{N}\star A)-A\|&\leq \|\tau_{N}\star(A-B)\|+\|\tau_{N}\star B-B\|+\|B-A\|\\
&\leq(1+\sup_{N\geq 1}\|\tau_{N}\|_{\frakM})\|A-B\|+\eps\\
&\leq(2+\sup_{N\geq 1}\|\tau_{N}\|_{\frakM})\eps \qedhere
\end{align*}
\end{proof}
As a consequence, we have the following
\begin{lemma}\label{lemma:compactperturbations}
Let $K \in \Sinfty$ and $\tau$ be as in Lemma \ref{lemma:convergence}. Then for any $t>0$ one has $$\sfn(t;\tau_{N}\star K)=O_{t}(1),\quad N \to \infty.$$ If $\tau(x,y)=\overline{\tau(y,x)}$ and $K$ is self-adjoint, the same holds for the functions $\sfn_{\pm}$.
\end{lemma}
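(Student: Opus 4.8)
The plan is to derive the lemma from Lemma~\ref{lemma:convergence} --- which, since $K$ is compact, gives $\|\tau_{N}\star K-K\|\to 0$ as $N\to\infty$ --- together with the elementary perturbation bound for ordered singular values. Recall that for \emph{any} bounded operator $T$ the numbers $s_{n}(T)=\inf\{\|T-R\|:\operatorname{rank}R<n\}$ are well defined and non-increasing in $n$, and that for bounded $A,B$ one has $s_{n}(A+B)\leq s_{n}(A)+\|B\|$ for every $n\geq 1$, so in particular $|s_{n}(A+B)-s_{n}(A)|\leq\|B\|$. The value of this formulation is that it requires no compactness of $\tau_{N}\star K$ itself, which is convenient because a priori we do not know $\tau_{N}\star K$ to be compact; we will only claim a bound valid for large $N$.

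Fix $t>0$. As $K$ is compact its singular values tend to $0$, so $m:=\sfn(t/2;K)$ is finite and $s_{n}(K)\leq t/2$ for all $n>m$. By Lemma~\ref{lemma:convergence} choose $N_{0}$ with $\|\tau_{N}\star K-K\|\leq t/3$ for all $N\geq N_{0}$. Then for $N\geq N_{0}$ and $n>m$,
\begin{gather*}
s_{n}(\tau_{N}\star K)\leq s_{n}(K)+\|\tau_{N}\star K-K\|\leq \frac{t}{2}+\frac{t}{3}=\frac{5t}{6}<t .
\end{gather*}
Since $n\mapsto s_{n}(\tau_{N}\star K)$ is non-increasing, this forces $\sfn(t;\tau_{N}\star K)\leq m$ for all $N\geq N_{0}$, i.e.\ $\sfn(t;\tau_{N}\star K)=O_{t}(1)$ as $N\to\infty$.

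For the self-adjoint statement, assume in addition $\tau(x,y)=\overline{\tau(y,x)}$ and $K=K^{\ast}$. A direct check on the quadratic form \eqref{eqn:schurmultiplication} shows that $\tau_{N}\star K$ is then self-adjoint, so $\sfn_{\pm}(t;\tau_{N}\star K)$ make sense, and Lemma~\ref{lemma:convergence} still yields $\|\tau_{N}\star K-K\|\to0$. Since every eigenvalue of $\pm(\tau_{N}\star K)$ exceeding $t$ is in particular a singular value of $\tau_{N}\star K$ exceeding $t$ (with multiplicities respected), $\sfn_{\pm}(t;\tau_{N}\star K)\leq \sfn(t;\tau_{N}\star K)$, and the claim follows from the first part.

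I do not expect a real obstacle here: the statement is a soft consequence of the norm convergence supplied by Lemma~\ref{lemma:convergence} and the compactness of $K$, the only point of care being the one noted above about not assuming $\tau_{N}\star K$ compact. If one wished to avoid invoking Lemma~\ref{lemma:convergence}, an equally short route is to write $K=F+(K-F)$ with $F$ of finite rank $r$ and $\|K-F\|<t\big(2\sup_{N}\|\tau_{N}\|_{\frakM}\big)^{-1}$, so that $\|\tau_{N}\star(K-F)\|<t/2$ by Assumption~\ref{(A)}; the singular-value inequality above then gives $\sfn(t;\tau_{N}\star K)\leq\sfn(t/2;\tau_{N}\star F)$, and Lemma~\ref{lemma:lemma2.1} bounds the right-hand side by $4t^{-2}\|\tau_{N}\star F\|_{\frakS_{2}}^{2}\leq 4t^{-2}\big(\sup_{j,k}|\tau(j,k)|\big)^{2}\|F\|_{\frakS_{2}}^{2}$, which is finite and independent of $N$.
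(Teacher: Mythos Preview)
Your proof is correct and follows essentially the same approach as the paper: both invoke Lemma~\ref{lemma:convergence} to obtain $\|\tau_{N}\star K-K\|\to0$ and then apply a Weyl-type perturbation bound for singular values. The paper uses the counting-function form $\sfn(t;\tau_{N}\star K)\leq\sfn(t-\eps;K)+\sfn(\eps;\tau_{N}\star K-K)$ and observes that the second term vanishes once $\|\tau_{N}\star K-K\|<\eps$, while you use the equivalent pointwise form $s_{n}(A+B)\leq s_{n}(A)+\|B\|$; the content is the same. Your extra care in not presupposing compactness of $\tau_{N}\star K$ is a nice touch (the paper's Lemma~\ref{lemma:weyl} is stated for compact operators, so strictly speaking this point is glossed over there), and your alternative finite-rank argument is a valid independent route.
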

\begin{proof}[Proof of Lemma]
From Lemma \ref{lemma:convergence}, we have that $\tau_{N}\star K\to K$ in the operator norm and, in particular, for $\eps>0$ we can find $N$ suitably large so that $\|\tau_{N}\star K-K\|<\eps$, whereby it follows that $\sfn(\eps;\tau_{N}\star K-K)=0$. Using \eqref{eqn:weyl}, we obtain for $0<\eps<t$:
\begin{align*}
\sfn(t;\tau_{N}\star K)&\leq \sfn(t-\eps;K)= C_{t}.
\end{align*}
The proof in the self-adjoint case follows exactly the same reasoning.
\end{proof}
\noindent Define $\calB_{0}$ as the set of operators on $\ltwo$:
\begin{gather}\label{eqn:decaycond}
A \in \calB_{0} \quad \Longleftrightarrow \quad \ A_{j,k}= O\left(\frac{1}{j+k}\right),\ \ \ j,k \to \infty.
\end{gather}
Clearly $A \in \calB_{0}$ if and only if there exists a sequence $ a \in \ell^{\infty}(\bbN^{2})$ so that $$A_{j,k}=\frac{a_{j,k}}{\pi(j+k+1)},\qquad \forall j, k\geq 0.$$ From Hilbert inequality one obtains the estimate $\|A\|\leq \|a\|_{\ell^{\infty}},$ and so $A$ is also bounded. If the multiplier $\tau$ satisfies assumption \ref{(C)}, i.e. if for some $\alpha>1/2,$ one has $$\abs{\tau(x,y)}\leq \frac{C_{\alpha}}{\log(x+y+2)^{\alpha}},\ \ \ \forall x,\,y, $$ it is not difficult to see that when $A \in \calB_{0}$ one has that $\tau_{N}\star A \in\frakS_{2}$, since we have the following estimate
\begin{align*}
    \|\tau_{N}\star A\|_{\frakS_{2}}^{2}&=\sum_{j,k,\geq 0}\abs{\tau\left(\frac{j}{N},\,\frac{ k}{N}\right)A_{j,k}}^{2}\\
    &\leq C_{\alpha} \sum_{j,k\geq 0}\frac{1}{\log\left(\frac{j+k}{N}+2\right)^{2\alpha}(j+k+1)^{2}}<\infty.
\end{align*}
In particular, $\tau_{N}\star A$ is a compact operator for any given $N$ and so it makes sense to study how the functions $\sfn(t;\tau_{N}\star A)$ and $\sfn_{\pm}(t;\tau_{N} \star A)$ (whenever $A$ is self-adjoint and $\tau(x,y)=\overline{\tau(y,x)}$) behave for large $N$. To this end, it is useful to define the following two functionals
\begin{gather}\label{eqn:upp}
\uld_{\tau}(t; A):=\limsup_{N\to \infty}\frac{\sfn(t; \tau_{N}\star A)}{\log(N)},\ \ \ t>0,\\\label{eqn:low}
\lld_{\tau}(t; A):=\liminf_{N\to \infty}\frac{\sfn(t; \tau_{N}\star A)}{\log(N)},\ \ \ t>0.
\end{gather}
If $\uld_{\tau}(t;A)=\lld_{\tau}(t;A)$, we denote by $\ld_{\tau}(t;A)$ their common value. For a self-adjoint operator $A \in \calB_{0}$, we define the functionals $\uld_{\tau}^{\pm}(t;A),\, \lld_{\tau}^{\pm}(t;A)$ with the functions $\sfn_{\pm}$ replacing $\sfn$ in \eqref{eqn:upp} and \eqref{eqn:low} respectively and denote by $\ld_{\tau}^{\pm}(t; A)$ their common value, if it exists. 
\end{subsection}
\begin{subsection}{Invariance of spectral densities}
For a fixed operator $A \in \calB_{0}$, we wish to study the relation between the asymptotic behaviour of $\mathsf{n}(t;\tau_{N}\star A)$ for large $N$ and the Schur-Hadamard multiplier $\tau$. In particular, the result below tells us that the function $\mathsf{n}(t;\tau_{N}\star A)$ (as well as $\mathsf{n}_{\pm}(t;\tau_{N}\star A)$) asymptotically behaves independently of the multiplier $\tau$. We refer to this phenomenon as the \textit{Invariance Principle} and we state it as follows

\begin{theorem}[\textbf{Invariance Principle}]\label{thm:invprinc}
Suppose $\tau_{1}, \tau_{2}$ are multipliers satisfying assumptions \textnormal{\ref{(B)}} and \textnormal{\ref{(C)}}. Then for $A \in \calB_{0}$ and for $t>0$ one has that 
\begin{gather*}
\uld_{\tau_{1}}(t+0;A)\leq \uld_{\tau_{2}}(t;A)\leq\uld_{\tau_{1}}(t-0;A),\cr 
\lld_{\tau_{1}}(t+0;A)\leq \lld_{\tau_{2}}(t;A)\leq\lld_{\tau_{1}}(t-0;A).
\end{gather*}
Similarly, for a self-adjoint $A\in \calB_{0}$ and $\tau_{i}(x,y)=\overline{\tau_{i}(y,x)}$, then one has that 
\begin{gather*}
\uld_{\tau_{1}}^{\pm}(t+0;A)\leq \uld_{\tau_{2}}^{\pm}(t;A)\leq\uld_{\tau_{1}}^{\pm}(t-0;A),\cr 
\lld_{\tau_{1}}^{\pm}(t+0;A)\leq \lld_{\tau_{2}}^{\pm}(t;A)\leq\lld_{\tau_{1}}^{\pm}(t-0;A).
\end{gather*}
\end{theorem}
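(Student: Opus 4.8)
The plan is to reduce the statement to a single comparison: if $\tau$ is a multiplier satisfying \ref{(B)} and \ref{(C)}, then comparing $\tau_N\star A$ with $(\tau_1)_N\star A$ (the Abel--Poisson regularisation, or indeed any fixed reference multiplier satisfying \ref{(B)}--\ref{(C)}) costs only an arbitrarily small shift in the threshold $t$. By symmetry of the roles of $\tau_1$ and $\tau_2$ it suffices to prove $\uld_{\tau_2}(t;A)\le \uld_{\tau_1}(t-0;A)$ and the analogous $\liminf$ inequality, the other bounds following by swapping $\tau_1\leftrightarrow\tau_2$. Write $\tau_2 = \tau_1 + (\tau_2-\tau_1)$, so that on the level of Schur--Hadamard products $(\tau_2)_N\star A = (\tau_1)_N\star A + R_N$ where $R_N = ((\tau_2-\tau_1)_N)\star A$ and $\tau_2-\tau_1$ vanishes at $(0,0)$. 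The Weyl inequality \eqref{eqn:weyl} then gives, for any $0<s<t$,
\begin{gather*}
\sfn(t;(\tau_2)_N\star A)\le \sfn(t-s;(\tau_1)_N\star A)+\sfn(s;R_N),
\end{gather*}
so the whole matter comes down to showing that $\sfn(s;R_N) = o(\log N)$ for every fixed $s>0$.

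The key step, and the main obstacle, is precisely the estimate $\sfn(s;R_N)=o(\log N)$ for the ``difference'' multiplier $\sigma:=\tau_2-\tau_1$. Here one exploits that $\sigma(0,0)=0$ together with the two quantitative bounds inherited from \ref{(B)} and \ref{(C)}: near the origin, $|\sigma(x,y)|\le C|\log(x+y)|^{-\beta}$ with $\beta>1/2$, and globally $|\sigma(x,y)|\le C\log(x+y+2)^{-\alpha}$ with $\alpha>1/2$. Since $A\in\calB_0$ we have $|A_{j,k}|\le c/(j+k+1)$, and I would estimate $R_N$ in the Hilbert--Schmidt norm by splitting the index set into the ``diagonal-ish'' region $j+k\le N$ and the tail $j+k>N$. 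On the tail the global bound \ref{(C)} gives
\begin{gather*}
\sum_{j+k>N}|\sigma_N(j,k)A_{j,k}|^2 \le c^2 C_\alpha^2 \sum_{j+k>N}\frac{1}{\log((j+k)/N+2)^{2\alpha}(j+k+1)^2},
\end{gather*}
which, grouping by $m=j+k$ (there are $m+1$ pairs), behaves like $\sum_{m>N} \log(m/N)^{-2\alpha} m^{-1}$; substituting $m = N e^{u}$ turns this into $\int_0^\infty u^{-2\alpha}\,du$ near $u=0$... which diverges, so in fact one must be more careful and use that the contribution to $\sfn(s;\cdot)$ only comes from sufficiently large singular values — I would instead bound $\|R_N\|_{\frakS_2}^2$ on dyadic blocks $2^\ell N \le m < 2^{\ell+1}N$, getting $\sum_{\ell\ge 0} (\ell+1)^{-2\alpha}$, which converges since $2\alpha>1$, and is in particular $O(1)$, hence $o(\log N)$. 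On the near-origin region $j+k\le N$ the bound \ref{(B)} applied to $\sigma_N(j,k) = \sigma(j/N,k/N)$ gives $|\sigma_N(j,k)|\le C|\log((j+k)/N)|^{-\beta}$, and again grouping by $m=j+k$ and summing $\sum_{m\le N} (\log(N/m))^{-2\beta} m^{-1}$ over dyadic blocks $N/2^{\ell+1} \le m < N/2^\ell$ yields $\sum_{\ell\ge 1}(\ell)^{-2\beta} = O(1)$ because $2\beta>1$. Thus $\|R_N\|_{\frakS_2}^2 = O(1)$, so by Lemma \ref{lemma:lemma2.1}, $\sfn(s;R_N) = O_s(1) = o(\log N)$.

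Granting this, dividing the Weyl estimate by $\log N$ and letting $N\to\infty$ gives $\uld_{\tau_2}(t;A)\le \uld_{\tau_1}(t-s;A)$ for every $s\in(0,t)$; letting $s\to 0+$ yields $\uld_{\tau_2}(t;A)\le \uld_{\tau_1}(t-0;A)$, and interchanging $\tau_1\leftrightarrow\tau_2$ in the same argument gives $\uld_{\tau_2}(t;A)\ge \uld_{\tau_1}(t+0;A)$, i.e. the first displayed chain. The $\liminf$ version is obtained verbatim, replacing $\limsup$ by $\liminf$ throughout (the Weyl inequality is an inequality between the counting functions at each fixed $N$, so it passes to both $\limsup$ and $\liminf$). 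Finally, in the self-adjoint case with $\tau_i(x,y)=\overline{\tau_i(y,x)}$ the operators $(\tau_i)_N\star A$ and $R_N$ are self-adjoint, so the same argument applies with \eqref{eqn:weylsa} in place of \eqref{eqn:weyl} and $\sfn_\pm$ in place of $\sfn$, using the self-adjoint clause of Lemma \ref{lemma:lemma2.1}. The one point requiring a little care is making the dyadic-block bookkeeping uniform in $N$, which is where the strict inequalities $\alpha,\beta>1/2$ are essential: they are exactly what makes the block sums $\sum_\ell (\ell+1)^{-2\alpha}$ and $\sum_\ell \ell^{-2\beta}$ summable and hence $O(1)$ rather than $O(\log N)$.
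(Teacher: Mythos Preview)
Your argument is essentially the paper's: Weyl's inequality reduces everything to a uniform Hilbert--Schmidt bound on $R_N=(\tau_2-\tau_1)_N\star A$, which then feeds into Lemma~\ref{lemma:lemma2.1} to give $\sfn(s;R_N)=O_s(1)$. The paper isolates this HS bound as a separate lemma (Lemma~\ref{lemma:unifbound}), proving it by the integral test and a change of variables $x\mapsto Nx$, $y\mapsto Ny$ that removes $N$ entirely, then splitting into a small neighbourhood of the origin (handled via \ref{(B)} and polar coordinates) and its complement (handled via \ref{(C)}, which is a \emph{global} bound valid for all $x,y\ge 0$).

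One point to tighten: you invoke \ref{(B)} on all of $\{j+k\le N\}$, but \ref{(B)} only guarantees $|\sigma(x,y)|\le C|\log(x+y)|^{-\beta}$ for $0\le x,y\le\eps$, i.e.\ for $j,k\le\eps N$; moreover your claimed bound $|\log((j+k)/N)|^{-\beta}$ blows up as $j+k\to N$, so the dyadic sum as written is not justified on the outer shells. The cure is simply to apply \ref{(B)} only on the box $\{j,k\le\eps N\}$ and cover everything else with \ref{(C)} (or plain boundedness of $\sigma$ together with $j+k\ge\eps N$), which is precisely the paper's split. Also, your worry about divergence at $u=0$ in the tail estimate is a phantom caused by dropping the $+2$: with $\log((j+k)/N+2)$ the logarithm is bounded below by $\log 3$ at $m=N$, so the substitution $m=Ne^{u}$ gives a convergent integral directly and the dyadic workaround, while correct, was not needed.
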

Before proving the result, let us prove the following auxiliary lemma.
\begin{lemma}\label{lemma:unifbound}
Let $\sigma$  satisfy Assumption \ref{(C)} and be such that $\sigma(0,0)=0$ and such that for some $\eps>0$ and some $\beta >1/2$, there exists $C_{\beta}>0$, so that 
\begin{gather}\label{eqn:decaycondition}
\abs{\sigma(x, y)}\leq C_{\beta}\abs{\log(x+y)}^{-\beta}, \quad \forall\, 0\leq x,\,y\leq \eps.
\end{gather}
For any $A \in \calB_{0}$, one has $\sigma_{N}\star A\in \frakS_{2}$ and furthermore there exists $C>0$, independent of $N$, such that $$\|\sigma_{N}\star A\|_{\frakS_{2}}\leq C.$$
\end{lemma}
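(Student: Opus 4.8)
The plan is to reduce the assertion to a uniform-in-$N$ bound on an explicit double series, and then to split that series into a ``low-frequency'' and a ``high-frequency'' part according to whether $j+k$ is below or above $N\eps$, estimating each by a comparison with an integral.

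First, since $A\in\calB_{0}$ the text following \eqref{eqn:decaycond} lets me write $A_{j,k}=a_{j,k}/\big(\pi(j+k+1)\big)$ with $M_{a}:=\|a\|_{\ell^{\infty}(\bbN^{2})}<\infty$, whence
\begin{gather*}
\|\sigma_{N}\star A\|_{\frakS_{2}}^{2}=\sum_{j,k\geq 0}\Big|\sigma\Big(\tfrac{j}{N},\tfrac{k}{N}\Big)\Big|^{2}|A_{j,k}|^{2}\leq \frac{M_{a}^{2}}{\pi^{2}}\sum_{m=0}^{\infty}\frac{1}{(m+1)^{2}}\sum_{j+k=m}\Big|\sigma\Big(\tfrac{j}{N},\tfrac{k}{N}\Big)\Big|^{2},
\end{gather*}
where the lattice points have been grouped by $m=j+k$ (there are $m+1$ of them for each $m$). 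Because $\sigma$ satisfies \ref{(C)}, the computation displayed just before \eqref{eqn:upp} already shows $\sigma_{N}\star A\in\frakS_{2}$ for each fixed $N$, so it suffices to bound the right-hand side uniformly for all sufficiently large $N$; I may also assume $\eps\in(0,1/2)$, since shrinking $\eps$ only weakens hypothesis \eqref{eqn:decaycondition} (and guarantees $x+y<1$ on the relevant range, so that $|\log(x+y)|=\log\big(1/(x+y)\big)>0$).

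For the \textbf{low-frequency part} $1\leq m\leq N\eps$ (the $m=0$ term vanishes because $\sigma(0,0)=0$): here $j,k\leq m\leq N\eps$, so $j/N,k/N\in[0,\eps]$ and \eqref{eqn:decaycondition} gives $|\sigma(j/N,k/N)|\leq C_{\beta}\big(\log(N/m)\big)^{-\beta}$. Carrying out the inner sum, this part contributes at most $C_{\beta}^{2}\sum_{1\leq m\leq N\eps}\tfrac{1}{m+1}\big(\log(N/m)\big)^{-2\beta}$. The crude pointwise bound $\big(\log(N/m)\big)^{-2\beta}\leq\big(\log(1/\eps)\big)^{-2\beta}$ is fatal, as it produces a factor $\log N$; instead I use $\tfrac{1}{m+1}\leq\log\tfrac{m+1}{m}=\log\tfrac{N}{m}-\log\tfrac{N}{m+1}$ together with the monotonicity of $v\mapsto v^{-2\beta}$ to dominate the $m$-th term by $\int_{\log(N/(m+1))}^{\log(N/m)}v^{-2\beta}\,dv$. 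Summing over $m$ telescopes the intervals and gives the bound $\int_{\log(N/(\lfloor N\eps\rfloor+1))}^{\infty}v^{-2\beta}\,dv=\frac{(\log(N/(\lfloor N\eps\rfloor+1)))^{1-2\beta}}{2\beta-1}$, which is finite because $\beta>1/2$ and, for $N$ large (explicitly $N\geq 3$, using $\eps<1/2$), the lower endpoint $\log\big(N/(\lfloor N\eps\rfloor+1)\big)$ is bounded below by a positive constant depending only on $\eps$.

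For the \textbf{high-frequency part} $m>N\eps$ I invoke \ref{(C)}: $|\sigma(j/N,k/N)|\leq C_{\alpha}\big(\log(m/N+2)\big)^{-\alpha}$, so after the inner sum this part is at most $C_{\alpha}^{2}\sum_{m>N\eps}\tfrac{1}{m+1}\big(\log(m/N+2)\big)^{-2\alpha}$. The summand $g(x)=(x+1)^{-1}\big(\log(x/N+2)\big)^{-2\alpha}$ is decreasing in $x$, so the series is at most $g(N\eps)+\int_{N\eps}^{\infty}g(x)\,dx$; the first term is $\leq(\log 2)^{-2\alpha}$, and the substitution $x=Ny$ turns the integral into $\int_{\eps}^{\infty}\frac{N\,dy}{(Ny+1)(\log(y+2))^{2\alpha}}\leq\int_{\eps}^{\infty}\frac{dy}{y(\log(y+2))^{2\alpha}}$, which is finite and independent of $N$ since $\alpha>1/2$ and the lower limit $\eps$ is strictly positive. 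Combining the two parts, and absorbing the finitely many small $N$ into the constant via the per-$N$ finiteness noted above, yields $\sup_{N\geq 1}\|\sigma_{N}\star A\|_{\frakS_{2}}<\infty$. The main obstacle is precisely the low-frequency estimate: near $j+k\asymp N\eps$ the symbol bound $|\log|^{-\beta}$ is largest among the terms it controls, and a term-by-term bound there costs a logarithm; the role of the hypothesis $\beta>1/2$ is exactly to make the telescoped integral $\int^{\infty}v^{-2\beta}\,dv$ converge, and one must check with some care that its lower limit stays away from $0$ uniformly in $N$ — which is why the argument needs $\eps<1$ and a reduction to large $N$.
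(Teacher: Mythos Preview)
Your argument is correct and follows essentially the same strategy as the paper: split according to whether the rescaled variables lie in a neighbourhood of the origin or not, apply the hypothesis \eqref{eqn:decaycondition} on the former piece and Assumption~\ref{(C)} on the latter, and reduce each to an integral of the form $\int v^{-2\beta}\,dv$ (resp.\ $\int v^{-2\alpha}\,dv$) whose convergence uses precisely $\beta>1/2$ (resp.\ $\alpha>1/2$). The only cosmetic difference is that the paper passes immediately to a two-dimensional integral via an integral test and then changes variables, whereas you stay with the discrete sum grouped by $m=j+k$ and use the telescoping bound $\tfrac{1}{m+1}\leq \log\tfrac{N}{m}-\log\tfrac{N}{m+1}$; both routes land on the same one-variable integrals.
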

\begin{proof}[Proof of Lemma]
We need to estimate the following quantity $$\|\sigma_{N}\star A\|_{\frakS_{2}}^{2}=\sum_{j,k\geq 0}\abs{\sigma\left(\frac{j}{N},\, \frac{k}{N}\right)}^{2}\abs{A_{j,k}}^{2}.$$
A modification of the integral test and the assumption that $A \in \calB_{0}$, shows that one can find $C>0$ so that
\begin{align*}
\|\sigma_{N}\star A\|_{\frakS_{2}}^{2}&\leq C \iint_{\Rplus^{2}}\frac{\abs{\sigma\left(\frac{x}{N},\, \frac{y}{N}\right)}^{2}}{(x+y+1)^{2}}dxdy\\ 
&=C \iint_{\Rplus^{2}}\frac{\abs{\sigma\left(s, t\right)}^{2}}{(s+t+1/N)^{2}}dsdt\quad (:=I_{N}),
\end{align*}
the last inequality follows from the change of variables $x=Ns,\, y=Nt$. Let $\Omega_{\eps}=\{(s, t) \in \Rplus^{2}\, \vert\, s^{2}+t^{2}<\eps\}$ and $\Omega_{\eps}^{c}=\Rplus^{2}\setminus \Omega_{\eps}$, then:
\begin{align*}
I_{N}&= \iint_{\Omega_{\eps}}\frac{\abs{\sigma\left(s, t\right)}^{2}}{(s+t+1/N)^{2}}dsdt \qquad (:=J_{1})\\
&+\iint_{\Omega_{\eps}^{c}}\frac{\abs{\sigma\left(s, t\right)}^{2}}{(s+t+1/N)^{2}}dsdt\qquad (:=J_{2})
\end{align*}
We will show that each summand is uniformly bounded. Since $\sigma$ satisfies \eqref{eqn:decaycondition}, it follows
\begin{align*}
J_{1}&\leq\frac{C_{\beta}}{\log(2)^{2}}\iint_{\Omega_{\eps}}\frac{1}{\log\left(s^{2}+t^{2}\right)^{2\beta}(s^{2}+t^{2})}dsdt\\
&\leq C \int_{0}^{\eps}\frac{1}{r\log(r)^{2\beta}}dr<\infty.
\end{align*}
The second inequality is a consequence of writing the integral in polar coordinates and, since $\beta>1/2$, the last integral is finite. Using \ref{(C)}, it follows that
\begin{align*}
J_{2}&\leq C \iint_{\Omega_{\eps}^{c}}\frac{dsdt}{(s+t)^{2}\log(s+t+2)^{2\alpha}}\\
&\leq C \int_{\eps}^{\infty}\frac{dx}{x\log(x+2)^{2\alpha}}<\infty.
\end{align*}
We have thus obtained that $I_{N}$ is uniformly bounded in $N$, whereby the assertion follows. 
\end{proof}
\begin{proof}[Proof of Theorem \ref{thm:invprinc}]
Write $A_{i}^{(N)}= (\tau_{i})_{N}\star A$, Weyl's inequality \eqref{eqn:weyl} states that 
\begin{align*}
\mathsf{n}(t;A_{1}^{(N)})&=\mathsf{n}(t+s-s;A_{1}^{(N)}+A_{2}^{(N)}-A_{2}^{(N)})\\
&\leq \mathsf{n}(t-s;A_{2}^{(N)})+\mathsf{n}(s;A_{1}^{(N)}-A_{2}^{(N)}),
\end{align*}
for any $0<s<t$. Swapping the roles of $A_{1}^{(N)}$ and $A_{2}^{(N)}$ in the above, we obtain 
\begin{gather*}
    \mathsf{n}(t+s;A_{2}^{(N)})-\mathsf{n}(s;A_{1}^{(N)}-A_{2}^{(N)})\leq\mathsf{n}(t;A_{1}^{(N)})\leq\mathsf{n}(t-s;A_{2}^{(N)})+\mathsf{n}(s;A_{1}^{(N)}-A_{2}^{(N)}),
\end{gather*}
 Lemmas \ref{lemma:lemma2.1} and \ref{lemma:unifbound} together imply that $$\mathsf{n}(s;A_{1}^{(N)}-A_{2}^{(N)})=O_{s}(1)$$ as $N\to \infty$, and so we obtain that 
\begin{gather*}
    \uld_{\tau_{1}}(t+s;A)\leq \uld_{\tau_{2}}(t;A)\leq \uld_{\tau_{1}}(t-s;A),\\
    \lld_{\tau_{1}}(t+s;A)\leq \lld_{\tau_{2}}(t;A)\leq \lld_{\tau_{1}}(t-s;A).
\end{gather*}
Sending $s\to 0$ gives the desired inequalities. In the self-adjoint setting, the same reasoning carries through once we replace the function $\sfn$ with the functions $\sfn_{\pm}$.
\end{proof}
\end{subsection}
\begin{subsection}{Almost symmetric and almost orthogonal operators} As mentioned in the Introduction, we will use the following two results which are similar, at least in spirit, to Theorems 2.2 in \cite{P-Y-Localization} and Theorem 2.7 in \cite{P-Y-Spec} and their proofs follow the same scheme. 

\noindent From now on, we make no assumptions on the uniform boundedness and smoothness of our multiplier $\tau$ and write $A^{(N)}=\tau_{N}\star A$. The first of the two results discussed below is about the interactions at the level of their spectral densities between two operators. Namely, if $A, B$ are bounded operators whose truncations $A^{(N)}, B^{(N)}$ are almost orthogonal in the sense that $$A^{(N)\ast}B^{(N)}\in \Sp,\qquad A^{(N)}B^{(N)\ast}\in \Sp,$$ for some $p\geq 1$ uniformly in $N$, then each of the logarithmic spectral densities of $\abs{A}$ and $\abs{B}$ contributes independently to the logarithmic spectral density of $\abs{A+B}$. Let us state the result as follows
\begin{theorem}\label{thm:thm2.2}
Let $A_{i}$, with $1\leq i\leq L$, be a family of operators such that for some $p\in [1, \infty)$ one has $$\sup_{N\geq 1}\| A^{(N)\ast}_{j}A_{k}^{(N)}\|_{\Sp}<\infty,\ \ \ \sup_{N\geq 1}\| A_{j}^{(N)}A^{(N)\ast}_{k}\|_{\Sp}^{p}<\infty,\ \ \ \forall\,  j\neq k.$$ Then, for $A=\sum_{j=1}^{L}A_{j}$ and for any $t>0$:
\begin{align}\label{eqn:uldinv}
\sum_{j=1}^{L}\uld_{\tau}(t+0; A_{j})\leq \uld_{\tau}\left(t; A\right)&\leq\sum_{j=1}^{L}\uld_{\tau}(t-0; A_{j}),\\\label{eqn:lldinv}
\sum_{j=1}^{L}\lld_{\tau}(t+0; A_{j})\leq \lld_{\tau}\left(t; A\right)&\leq\sum_{j=1}^{L}\lld_{\tau}(t-0; A_{j}).
\end{align}
If all $A_{j}$ are self-adjoint and $\tau(x,y)=\overline{\tau(y,x)}$, then we have 
\begin{align}\label{eqn:uldinvpm}
\sum_{j=1}^{L}\uld_{\tau}^{\pm}(t+0; A_{j})\leq \uld_{\tau}^{\pm}\left(t; A\right)&\leq\sum_{j=1}^{L}\uld_{\tau}^{\pm}(t-0; A_{j}),\\\label{eqn:lldinvpm}
\sum_{j=1}^{L}\lld_{\tau}^{\pm}(t+0; A_{j})\leq \lld_{\tau}^{\pm}\left(t; A\right)&\leq\sum_{j=1}^{L}\lld_{\tau}^{\pm}(t-0; A_{j}).
\end{align}
\end{theorem}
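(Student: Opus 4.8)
The plan is to reduce the multi-operator statement to the two-operator case $L=2$ by induction, since once we know $A_1 + A_2$ behaves well, we can treat $A = (A_1 + \cdots + A_{L-1}) + A_L$ provided the partial sum $A_1 + \cdots + A_{L-1}$ is still almost orthogonal to $A_L$ at the level of truncations; this last point follows because $(\sum_{j<L} A_j)^{(N)\ast}A_L^{(N)} = \sum_{j<L}A_j^{(N)\ast}A_L^{(N)}$ is a finite sum of operators each uniformly bounded in $\frakS_p$, hence uniformly bounded in $\frakS_p$, and similarly for the other product. So it suffices to prove the case of two operators $A, B$ with $\sup_N\|A^{(N)\ast}B^{(N)}\|_{\frakS_p} < \infty$ and $\sup_N\|A^{(N)}B^{(N)\ast}\|_{\frakS_p} < \infty$.

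For the two-operator case, first I would establish the upper bounds. Writing $C^{(N)} = A^{(N)} + B^{(N)}$, Weyl's inequality \eqref{eqn:weyl} gives, for $0 < s < t$, $\sfn(t; C^{(N)}) \leq \sfn(t-s; A^{(N)}) + \sfn(s; B^{(N)})$, and dividing by $\log N$ and taking $\limsup$ (resp. $\liminf$) then letting $s \to 0$ yields $\uld_\tau(t; A+B) \leq \uld_\tau(t-0; A) + \uld_\tau(t-0; B)$, and similarly for $\lld_\tau$ and for $\sfn_\pm$. For the lower bounds, the key device is to control $\sfn$ of the \emph{block} operator. Consider the operator $D^{(N)} = \begin{pmatrix} A^{(N)} \\ B^{(N)} \end{pmatrix}$ acting $\ltwo \to \ltwo \oplus \ltwo$; then $D^{(N)\ast}D^{(N)} = A^{(N)\ast}A^{(N)} + B^{(N)\ast}B^{(N)}$ while $D^{(N)}D^{(N)\ast} = \begin{pmatrix} A^{(N)}A^{(N)\ast} & A^{(N)}B^{(N)\ast} \\ B^{(N)}A^{(N)\ast} & B^{(N)}B^{(N)\ast} \end{pmatrix}$. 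The off-diagonal blocks of the latter are uniformly bounded in $\frakS_p$ by hypothesis, so by Weyl's inequality $\sfn(t; D^{(N)}D^{(N)\ast})$ differs from $\sfn(t; A^{(N)}A^{(N)\ast} \oplus B^{(N)}B^{(N)\ast}) = \sfn(t; A^{(N)}A^{(N)\ast}) + \sfn(t; B^{(N)}B^{(N)\ast})$ by at most $O_s(1)$ after shifting the argument by $s$; combined with \eqref{eqn:eqn2.3} this relates $\sfn(t; D^{(N)})$ to $\sfn_A + \sfn_B$. Finally one has to pass from the block operator $D^{(N)}$ back to the sum $C^{(N)} = A^{(N)} + B^{(N)}$; here observe $C^{(N)} = U D^{(N)}$ where $U: \ltwo \oplus \ltwo \to \ltwo$ is the contraction $U(x,y) = x+y$ (with $\|U\| = \sqrt{2}$), so $s_n(C^{(N)}) \leq \sqrt{2}\, s_n(D^{(N)})$, giving $\sfn(t; C^{(N)}) \leq \sfn(t/\sqrt 2; D^{(N)})$ — but we need the reverse comparison for the lower bound, so instead I would use that $A^{(N)\ast}A^{(N)} \leq 2\, \mathrm{Re}(\cdot)$-type estimates are not available; rather, the cleanest route is to note $\begin{pmatrix} A^{(N)} \\ B^{(N)}\end{pmatrix}$ and $(A^{(N)}, B^{(N)})$ (a row operator) have, up to the off-diagonal $\frakS_p$-errors, essentially the same nonzero singular values as $A^{(N)} + B^{(N)}$ together with a small-rank correction, using the polarization $\|A^{(N)}x\|^2 + \|B^{(N)}x\|^2$ versus $\|A^{(N)}x + B^{(N)}x\|^2$ and the cross-term bound.

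Concretely, for the lower bound I would argue: for a unit vector $x$, $\|(A^{(N)}+B^{(N)})x\|^2 = \|A^{(N)}x\|^2 + \|B^{(N)}x\|^2 + 2\,\mathrm{Re}\langle A^{(N)}x, B^{(N)}x\rangle = \langle D^{(N)\ast}D^{(N)}x, x\rangle + 2\,\mathrm{Re}\langle B^{(N)\ast}A^{(N)}x, x\rangle$, and since $B^{(N)\ast}A^{(N)} \in \frakS_p$ uniformly, a min-max / Ky Fan argument shows that the eigenvalue counting function of $(A^{(N)}+B^{(N)})^\ast(A^{(N)}+B^{(N)})$ above level $t^2$ and that of $D^{(N)\ast}D^{(N)}$ above a nearby level $t'^2$ differ by $O_s(1)$ (using Lemma \ref{lemma:lemma2.1} to count the eigenvalues of the $\frakS_p$ error term). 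Since $D^{(N)\ast}D^{(N)} = A^{(N)\ast}A^{(N)} + B^{(N)\ast}B^{(N)}$ and these two are \emph{almost orthogonal in the sense that their ranges are nearly orthogonal} — precisely because $A^{(N)}B^{(N)\ast} \in \frakS_p$ forces $B^{(N)\ast}A^{(N)}A^{(N)\ast}B^{(N)} \in \frakS_{p/2}$ hence the mixed term in $(A^{(N)\ast}A^{(N)})(B^{(N)\ast}B^{(N)})$ is small — one gets $\sfn(t^2; A^{(N)\ast}A^{(N)} + B^{(N)\ast}B^{(N)}) \geq \sfn(t^2 + s; A^{(N)\ast}A^{(N)}) + \sfn(t^2 + s; B^{(N)\ast}B^{(N)}) - O_s(1)$ by the standard orthogonal-subspace argument. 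Dividing by $\log N$, taking limits, and sending $s \to 0$ closes the lower bound. The self-adjoint versions \eqref{eqn:uldinvpm}--\eqref{eqn:lldinvpm} follow by replacing $\sfn$ with $\sfn_\pm$ throughout, using \eqref{eqn:weylsa} and the self-adjoint part of Lemma \ref{lemma:lemma2.1}, together with the spectral projections of $\pm(A^{(N)}+B^{(N)})$ in place of the singular-value arguments; here $\tau(x,y) = \overline{\tau(y,x)}$ ensures all $A_j^{(N)}$ stay self-adjoint. The main obstacle, and the step requiring the most care, is the quantitative passage from "the mixed products $A_j^{(N)\ast}A_k^{(N)}$ and $A_j^{(N)}A_k^{(N)\ast}$ are uniformly small in $\frakS_p$" to "the counting functions add up to within $O_s(1)$"; the difficulty is that $\frakS_p$-smallness of a perturbation only controls $O_s(1) = O_s(\|\cdot\|_{\frakS_p}^p / s^p)$ many eigenvalues via Lemma \ref{lemma:lemma2.1}, and one must carefully track that this $O_s(1)$ is uniform in $N$ and does not secretly depend on the other operators in a way that ruins the division by $\log N$ — this is exactly where the uniform-in-$N$ hypotheses $\sup_N \|\cdot\|_{\frakS_p} < \infty$ are essential, and where the argument mirrors the localization-principle proofs of \cite{P-Y-Localization, P-Y-Spec}.
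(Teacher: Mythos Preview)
Your direct-Weyl argument for the upper bound is wrong: from $\sfn(t; A^{(N)}+B^{(N)}) \leq \sfn(t-s; A^{(N)}) + \sfn(s; B^{(N)})$ you obtain, after dividing by $\log N$, passing to the limit, and sending $s\to 0$, only
\[
\uld_\tau(t; A+B) \leq \uld_\tau(t-0; A) + \uld_\tau(0+; B),
\]
not the symmetric bound $\uld_\tau(t-0; A) + \uld_\tau(t-0; B)$. The Weyl split $t=(t-s)+s$ is inherently asymmetric in the two summands, and no choice of $s$ recovers the claimed inequality. This step does not use the almost-orthogonality hypothesis at all, which should already signal that it cannot yield the sharp upper bound.

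The block-operator device you introduce for the lower bound is precisely the paper's approach and, when executed cleanly, delivers \emph{both} inequalities at once; the separate upper-bound argument is unnecessary. The paper handles all $L$ summands simultaneously (so your induction, while harmless, is also unnecessary): with $\calA_N=\diag(A_1^{(N)},\dots,A_L^{(N)})$ on $\calH=\bigoplus^L\ltwo$ and $\calJ:\calH\to\ltwo$ the summation map, one has $(\calJ\calA_N)^*(\calJ\calA_N)=\calA_N^*\calA_N+\calK_N$, where $\calK_N$ is the off-diagonal block matrix with entries $A_j^{(N)*}A_k^{(N)}$, uniformly bounded in $\Sp$. Weyl then gives two-sided estimates between $\sfn(t;\calJ\calA_N)$ and $\sum_j\sfn(\sqrt{t^2\pm s};A_j^{(N)})$. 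Separately, $A^{(N)}A^{(N)*}=(\calJ\calA_N)(\calJ\calA_N)^*+D_N$ with $D_N=\sum_{j\neq k}A_j^{(N)}A_k^{(N)*}$ again uniformly in $\Sp$, and another application of Weyl gives two-sided estimates between $\sfn(t;A^{(N)})$ and $\sfn(\cdot;\calJ\calA_N)$. Chaining these and sending $s\to 0$ yields all of \eqref{eqn:uldinv}--\eqref{eqn:lldinv}.

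Your final paragraph, attempting to split $\sfn(t^2; A^{(N)*}A^{(N)}+B^{(N)*}B^{(N)})$ directly via ``almost orthogonal ranges'', is both unnecessary and incorrect as stated: a sum of two nonnegative operators on the \emph{same} space need not have counting function close to the sum of the individual ones (take $A=B$). The correct passage, which you in fact already wrote down a few lines earlier, is that $D^{(N)*}D^{(N)}$ and $D^{(N)}D^{(N)*}$ share nonzero eigenvalues, and $D^{(N)}D^{(N)*}$ is a genuine block-diagonal operator on $\ltwo\oplus\ltwo$ plus a $\Sp$-small off-diagonal perturbation; it is the block-diagonal structure on the direct sum, not any orthogonality of ranges inside $\ltwo$, that makes the counting function additive.
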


\begin{proof}[Proof of Theorem \ref{thm:thm2.2}]
We will prove only \eqref{eqn:uldinv}, since \eqref{eqn:lldinv},\eqref{eqn:uldinvpm} and \eqref{eqn:lldinvpm} follow the same line of reasoning. Put $\calH=\oplus_{i=1}^{L}\ltwo$ and define the block diagonal operator $\calA_{N}=\diag\{A_{1}^{(N)},\ldots,A_{L}^{(N)}\}$ such that $$\calA_{N}(f_{1},\ldots, f_{L})=(A_{1}^{(N)}f_{1},\ldots, A_{L}^{(N)}f_{L}).$$ Similarly, let $\calA=\diag\{A_{1}, \ldots, A_{L}\}$. Define the operator $\calJ:\calH\to\ltwo$ as $$\calJ(f_{1},\ldots,f_{L})=\sum_{j=1}^{L}f_{j}.$$
The operator $(\calJ\calA_{N})^{\ast}(\calJ\calA_{N})$ can be written as an $L\times L$ block-matrix of the form:
$$\begin{pmatrix}
A_{1}^{(N)\ast}A_{1}^{(N)}&A_{1}^{(N)\ast}A_{2}^{(N)}&\ldots &A_{1}^{(N)\ast}A_{L}^{(N)}\\
A_{2}^{(N)\ast}A_{1}^{(N)}&A_{2}^{(N)\ast}A_{2}^{(N)}&\ldots &A_{2}^{(N)\ast}A_{L}^{(N)}\\
\vdots &\vdots &\ddots &\vdots\\
A_{L}^{(N)\ast}A_{1}^{(N)}&A_{L}^{(N)\ast}A_{2}^{(N)}&\ldots &A_{L}^{(N)\ast}A_{L}^{(N)}
\end{pmatrix}.$$
Since the operator $\calA_{N}^{\ast}\calA_{N}$ is the block diagonal $L\times L$ matrix $$\begin{pmatrix}
A_{1}^{(N)\ast}A_{1}^{(N)}&0&\ldots &0\\
0&A_{2}^{(N)\ast}A_{2}^{(N)}&\ldots &0\\
\vdots &\vdots &\ddots &\vdots\\
0&0&\ldots &A_{L}^{(N)\ast}A_{L}^{(N)}
\end{pmatrix},$$ 
it is easy to see that the difference $(\calJ\calA_{N})^{\ast}(\calJ\calA_{N})-\calA_{N}^{\ast}\calA_{N}$ is the $L\times L$ matrix
$$\calK_{N}=\begin{pmatrix}
0&A_{1}^{(N)\ast}A_{2}^{(N)}&\ldots &A_{1}^{(N)\ast}A_{L}^{(N)}\\
A_{2}^{(N)\ast}A_{1}^{(N)}&0&\ldots &A_{2}^{(N)\ast}A_{L}^{(N)}\\
\vdots &\vdots &\ddots &\vdots\\
A_{L}^{(N)\ast}A_{1}^{(N)}&A_{L}^{(N)\ast}A_{2}^{(N)}&\ldots &0
\end{pmatrix}.$$
Furthermore, since the operators $A_{j}$ are so that $\sup_{N}\|A_{j}^{(N)\ast}A_{k}^{(N)}\|_{\Sp}$ is finite for all $j\neq k$, then $$\sup_{N\geq 1}\|\calK_{N}\|_{\Sp}<\infty.$$ 
Thus, Weyl inequality \eqref{eqn:weyl} gives
\begin{align*}
\sfn(t; \calJ\calA_{N})=\sfn(t^{2}; (\calJ\calA_{N})^{\ast}(\calJ\calA_{N}))&\leq \sfn(t^{2}-s; \calA_{N}^{\ast } \calA_{N})+\sfn(s; \calK_{N})\\
																					 &=\sum_{j=1}^{L}\sfn(t^{2}-s; A_{j}^{(N)\ast} A_{j}^{(N)})+\sfn(s; \calK_{N}),
\end{align*}
where in the second line we used the fact that $\calA_{N}^{\ast } \calA_{N}$ is diagonal and so $$\sfn(t^{2}-s; \calA_{N}^{\ast } \calA_{N})=\sum_{j=1}^{L}\sfn(t^{2}-s; A_{j}^{(N)\ast} A_{j}^{(N)}).$$ Just as in the proof of Theorem \ref{thm:invprinc}, we swap the roles of $\calJ\calA_{N}$ and $\calA_{N}$ and, using \eqref{eqn:eqn2.3}, we obtain $$\sum_{j=1}^{L}\sfn(\sqrt{t^{2}+s}; A_{j}^{(N)})-\sfn(s; \calK_{N})\leq\sfn(t; \calJ\calA_{N})\leq \sum_{j=1}^{L}\sfn(\sqrt{t^{2}-s}; A_{j}^{(N)})+\sfn(s; \calK_{N}).$$
Dividing by $\log(N)$, exploiting the sub-additivity of the $\limsup$ in conjunction with Lemma \ref{lemma:lemma2.1} and sending $s \to 0$, one gets  
\begin{align}\label{eqn:inequalities}
\sum_{j=1}^{L}\uld_{\tau}(t+0; A_{j})\leq \uld_{\tau}(t; \calJ\calA)\leq \sum_{j=1}^{L}\uld_{\tau}(t-0; A_{j}).
\end{align}
Recall now that we set $A=\sum_{j=1}^{L}{A_{j}}$. We have
\begin{gather*}
A^{(N)}A^{(N)\ast}=\sum_{j, k=1}^{L}A_{j}^{(N)}A_{k}^{(N)\ast},\cr
(\calJ\calA_{N})(\calJ\calA_{N})^{\ast}=\sum_{j=1}^{L}A_{j}^{(N)}A_{j}^{(N)\ast}.
\end{gather*}
Write $D_{N}=A^{(N)}A^{(N)\ast}-(\calJ\calA_{N})(\calJ\calA_{N})^{\ast}$, then from our assumptions it follows that $\sup_{N\geq 1}\|D_{N}\|_{\Sp}<\infty$ and using \eqref{eqn:eqn2.3} in conjunction with the Weyl inequality \eqref{eqn:weyl}, we obtain
\begin{align*}
\sfn(t; A^{(N)\ast})=\sfn(t^{2}; A^{(N)}A^{(N)\ast})&\leq \sfn(t^{2}-s; (\calJ\calA_{N})(\calJ\calA_{N})^{\ast})+\sfn(s;D_{N})\\
&=\sfn(\sqrt{t^{2}-s};(\calJ\calA_{N})^{\ast})+\sfn(s;D_{N}).\\
\end{align*}
Whereby we obtain that
\begin{gather*}
\uld_{\tau}(t; A)=\uld_{\tau}(t; A^{\ast})\geq \uld_{\tau}(t+0; (\calJ\calA)^{\ast})=\uld_{\tau}(t+0; (\calJ\calA)),\\
\uld_{\tau}(t; A)=\uld_{\tau}(t; A^{\ast})\leq \uld_{\tau}(t-0; (\calJ\calA)^{\ast})=\uld_{\tau}(t-0; (\calJ\calA)). 
\end{gather*}
The above, in conjunction with \eqref{eqn:inequalities}, gives the result.
\end{proof}

\noindent The second result applies to a self-adjoint operator $A$ and establishes a relation between $\uld_{\tau}^{+}(t;A)$ (resp. $\lld^{+}(t;A)$) and $\uld_{\tau}^{-}(t;A)$ (resp. $\lld^{-}(t;A)$).  More precisely, if a self-adjoint operator $A$ is so that its truncation $A^{(N)}$ is almost symmetric under reflection around 0, in the sense that for some unitary operator $U$ one has $$UA^{(N)}+A^{(N)}U\ \in\ \Sp$$ for some $p\geq 1$ uniformly in $N$, then its upper and lower logarithmic spectral densities contribute equally to the logarithmic spectral density of $\abs{A}$. In other words, the positive and negative eigenvalues of $\tau_{N}\star A$ accumulate to the spectrum of $A$ in the same way. We can formulate this as follows
\begin{theorem}\label{thm:thm2.3}
Let $A$ be a self-adjoint operator and let $\tau$ be such that $\tau(x,y)=\overline{\tau(y,x)}$. Suppose there exists a unitary operator $U$ for which $$\sup_{N\geq 1}\|UA^{(N)}+A^{(N)}U\|_{\Sp}<\infty$$ for some $p\geq 1$. Then for $t>0$
\begin{gather*}
\uld_{\tau}^{-}(t+0;A)\leq\uld_{\tau}^{+}(t;A)\leq\uld_{\tau}^{-}(t-0;A),\\
\lld_{\tau}^{-}(t+0;A)\leq\lld_{\tau}^{+}(t;A)\leq\lld_{\tau}^{-}(t-0;A)
\end{gather*}
In particular, we get that
 \begin{gather*}
\uld_{\tau}(t+0;A)\leq 2\uld_{\tau}^{\pm}(t;A)\leq\uld_{\tau}(t-0;A),\\
\lld_{\tau}(t+0;A)\leq 2\lld_{\tau}^{\pm}(t;A)\leq\lld_{\tau}(t-0;A).
\end{gather*}
\end{theorem}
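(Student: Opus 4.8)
The plan is to reduce the whole statement to a single unitary-conjugation identity, to which we apply the self-adjoint Weyl inequality \eqref{eqn:weylsa} and the Schatten estimate of Lemma~\ref{lemma:lemma2.1}. Write $A^{(N)}=\tau_{N}\star A$. Since $A=A^{\ast}$ and $\tau(x,y)=\overline{\tau(y,x)}$, each $A^{(N)}$ is self-adjoint, and in the ambient setting under which $\uld_{\tau}^{\pm},\lld_{\tau}^{\pm}$ are defined ($A\in\calB_{0}$ and $\tau$ satisfying \ref{(C)}, whence $A^{(N)}$ is compact) Lemma~\ref{lemma:weyl} is available. The first step is to check that
\[
\tilde K_{N}:=\bigl(UA^{(N)}+A^{(N)}U\bigr)U^{\ast}=UA^{(N)}U^{\ast}+A^{(N)}
\]
is self-adjoint, lies in $\Sp$, and satisfies $\sup_{N}\|\tilde K_{N}\|_{\Sp}=\sup_{N}\|UA^{(N)}+A^{(N)}U\|_{\Sp}<\infty$, since multiplication by the unitary $U$ preserves singular values. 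Rearranging, $UA^{(N)}U^{\ast}=-A^{(N)}+\tilde K_{N}$; and because $UA^{(N)}U^{\ast}$ is unitarily equivalent to $A^{(N)}$ one has $\sfn_{\pm}(t;A^{(N)})=\sfn_{\pm}(t;UA^{(N)}U^{\ast})$ for every $t>0$.

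The second step feeds this identity into \eqref{eqn:weylsa}: for $0<s<t$,
\[
\sfn_{+}(t;A^{(N)})\le\sfn_{+}(t-s;-A^{(N)})+\sfn_{+}(s;\tilde K_{N})=\sfn_{-}(t-s;A^{(N)})+\sfn_{+}(s;\tilde K_{N}),
\]
and, with $\sfn_{-}$ in place of $\sfn_{+}$, $\sfn_{-}(t;A^{(N)})\le\sfn_{+}(t-s;A^{(N)})+\sfn_{-}(s;\tilde K_{N})$. By Lemma~\ref{lemma:lemma2.1}, $\sfn_{\pm}(s;\tilde K_{N})\le\|\tilde K_{N}\|_{\Sp}^{p}\,s^{-p}\le Cs^{-p}$ uniformly in $N$, so after division by $\log N$ these error terms vanish in the limit. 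Passing to $\limsup_{N\to\infty}$ (respectively $\liminf$) and then letting $s\downarrow0$ gives $\uld_{\tau}^{+}(t;A)\le\uld_{\tau}^{-}(t-0;A)$ from the first inequality, and, from the second applied at the point $t+s$, $\uld_{\tau}^{-}(t+s;A)\le\uld_{\tau}^{+}(t;A)$ for all $s>0$, hence $\uld_{\tau}^{-}(t+0;A)\le\uld_{\tau}^{+}(t;A)$; the $\lld$-versions are obtained verbatim. This is precisely the first pair of displayed chains in the statement.

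For the ``in particular'' chains, combine $\sfn(t;A^{(N)})=\sfn_{+}(t;A^{(N)})+\sfn_{-}(t;A^{(N)})$ with the two finite-$N$ inequalities just obtained and the trivial threshold monotonicity $\sfn_{\pm}(t-s;A^{(N)})\ge\sfn_{\pm}(t;A^{(N)})\ge\sfn_{\pm}(t+s;A^{(N)})$. One reads off, for $0<s<t$,
\[
2\,\sfn_{\pm}(t;A^{(N)})-Cs^{-p}\le\sfn(t-s;A^{(N)}),\qquad\sfn(t+s;A^{(N)})\le2\,\sfn_{\pm}(t;A^{(N)})+Cs^{-p};
\]
dividing by $\log N$, taking $\limsup$ (resp. $\liminf$) and sending $s\downarrow0$ then yields $\uld_{\tau}(t+0;A)\le2\uld_{\tau}^{\pm}(t;A)\le\uld_{\tau}(t-0;A)$ and the corresponding chain for $\lld_{\tau}$.

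The argument is entirely soft: it needs no compactness or decay input beyond what is already at hand. The one point requiring attention is to verify that $\tilde K_{N}$ is genuinely self-adjoint, so that one may invoke \eqref{eqn:weylsa} rather than only \eqref{eqn:weyl}; this is immediate from $\tilde K_{N}=UA^{(N)}U^{\ast}+A^{(N)}$ together with $A^{(N)\ast}=A^{(N)}$. The only other, purely routine, nuisance is the bookkeeping of the one-sided limits $t\pm0$, which is handled throughout by the monotonicity in $t$ of the four functionals $\uld_{\tau}^{\pm},\lld_{\tau}^{\pm}$ and by the freedom to apply the finite-$N$ inequalities at shifted thresholds.
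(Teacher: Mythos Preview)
Your proof is correct and follows essentially the same route as the paper's: define a self-adjoint perturbation (you use $\tilde K_{N}=UA^{(N)}U^{\ast}+A^{(N)}$, the paper uses $K_{N}=A^{(N)}+U^{\ast}A^{(N)}U$, which are unitary conjugates of one another), observe that its $\Sp$-norm is uniformly bounded because unitary multiplication preserves singular values, feed the identity $A^{(N)}=-U^{(\ast)}A^{(N)}U^{(\ast)}+K_{N}$ into the self-adjoint Weyl inequality \eqref{eqn:weylsa}, and conclude via Lemma~\ref{lemma:lemma2.1} after dividing by $\log N$. Your write-up is slightly more explicit than the paper's (you spell out the self-adjointness check for $\tilde K_{N}$ and the derivation of the ``in particular'' chain from $\sfn=\sfn_{+}+\sfn_{-}$), but the underlying argument is identical.
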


\begin{proof}[Proof of Theorem \ref{thm:thm2.3}]
Write $$K_{N}=A^{(N)}+U^{\ast}A^{(N)}U.$$ By assumption $\sup_{N\geq 1}\|K_{N}\|_{\Sp}$ is finite. Furthermore, by Weyl inequality \eqref{eqn:weylsa}
\begin{align*}
\sfn_{\pm}(t; A^{(N)})&=\sfn_{\pm}(t; -U^{\ast}A^{(N)}U+K_{N})\\
									  &\leq \sfn_{\pm}(t-s; -U^{\ast}A^{(N)}U)+\sfn_{\pm}(s,K_{N})\\
									  &=\sfn_{\mp}(t-s; A^{(N)})+\sfn_{\pm}(s,K_{N}),
\end{align*}
where $0<s<t.$ In particular, this gives that
\begin{gather*}
    \sfn_{-}(t+s; A^{(N)})-\sfn_{-}(s,K_{N})\leq \sfn_{+}(t; A^{(N)})\leq \sfn_{-}(t-s; A^{(N)})+\sfn_{+}(s,K_{N}).
\end{gather*}
The result follows once we divide through by $\log(N)$, send $N \to \infty$ and use Lemma \ref{lemma:lemma2.1}.
\end{proof}
\end{subsection}
\end{section}
\begin{section}{Hankel operators and the Abel summation method}
\begin{subsection}{Hankel operators}
In the Introduction, we defined Hankel matrices acting on $\ltwo$, equivalently they can also be defined as integral operators acting on $\Ltwo$.

\noindent Let $\bbT$ be the unit circle in the complex plane, and $\m$ the Lebesgue measure normalised to 1, i.e $d\m(z)=(2\pi iz)^{-1}dz$. Define the Riesz projection as 
\begin{gather}\label{eqn:eqn1.1}
P_{+}:\Ltwo\longrightarrow \Ltwo\cr
(P_{+}f)(v)=\lim_{\eps\to 0}\int_{\bbT}{\frac{f(z)z}{z-(1-\eps)v}d\m(z)},\quad v \in \bbT.
\end{gather}
For a symbol $\omega$, the \textit{Hankel operator} $H(\omega)$ is:
\begin{gather}
H(\omega ):\Ltwo \to \Ltwo\cr\label{defn:defn1.1}
H(\omega)f= P_{+}\omega JP_{+}f
\end{gather} 
where $J$ is the involution $Jf(v)=f(\overline{v})$ and, by a slight abuse of notation, $\omega$ denotes both the symbol and the induced  operator of multiplication on $\Ltwo$. We can immediately see that if $\omega$ satisfies \eqref{eqn:symmcond}, $H(\omega)$ is self-adjoint. Furthermore, it is easy to see that
\begin{equation}\label{eqn:normestimate}
\|H(\omega)\|\leq \|\omega\|_{\infty}.
\end{equation}

\noindent For any non-negative integers $j,k$, one has 
\begin{align*}
\left(H(\omega)z^{j}, z^{k}\right)_{\Ltwo}&=\left(P_{+}\omega JP_{+}z^{j}, z^{k}\right)_{\Ltwo}\\
&=\left(\omega\cdot z^{-j}, z^{k}\right)_{\Ltwo}=\widehat{\omega}(j+k),
\end{align*}
and so the matrix representation of $H(\omega)$ in the basis $\{z^{n}\}_{n\in \bbZ}$ is the block-matrix
$$\begin{pmatrix}
0&0\\
0&\Gam(\widehat{\omega})
\end{pmatrix},$$  with respect to the orthogonal decomposition $\Ltwo=H^{2}\oplus(H^{2})^{\perp}$, where $H^{2}$ is the closed linear span in $\Ltwo$ of the monomials $\{z^{n}\}_{n\geq 0}$. In other words, $H(\omega)$ and $\Gam(\widehat{\omega})$ are unitarily equivalent (modulo kernels) under the Fourier transform.

\noindent For $0<r<1$, let $P_{r}$ be the Poisson kernel, defined as
\begin{gather*}
P_{r}(v)=\sum_{j=-\infty}^{\infty}{r^{\abs{n}}v^{n}}=\frac{1-r^{2}}{\abs{1-rv}^{2}},\ \ \ v\in \bbT.
\end{gather*}
For $\omega_{r}=P_{r}\ast \omega$, we have the identity 
\begin{equation}\label{eqn:split}
    H(\omega_{r})=C_{r}H(\omega)C_{r},
\end{equation}
 where $C_{r}$ is the operator of convolution by $P_{r}$ on $\Ltwo$. Furthermore, it is unitarily equivalent (modulo kernels) to the Hankel matrix 
\begin{gather}\label{eqn:poissontruncation}
\Gam^{(r)}(\widehat{\omega})=\left\{r^{j+k}\widehat{\omega}(j+k)\right\}_{j,k\geq 0}.
\end{gather}
\noindent Note that for $r=e^{-1/N}$, the above reduces to the truncation considered in \eqref{eqn:poistrun}. For $0<r<1$, the map $H(\omega)\mapsto H(\omega_{r})$ has the following properties 
\begin{enumerate}[label=(\roman*)]
\item for any bounded Hankel operator $H(\omega)$, $H(\omega_{r})\in \frakS_{0}$. Furthermore, \eqref{eqn:split} and H\"older's inequality for Schatten classes (see \cite[Thm. 2, Ch. 11.4]{BirSol1}) give  for $1\leq p\leq \infty$ $$\|H(\omega_{r})\|_{\Sp}\leq \frac{1}{(1-r^{2p})^{1/p}}\|H(\omega)\|;$$
\item if $H(\omega) \in \Sp$ for some $1\leq p\leq \infty$, then \eqref{eqn:split} implies $\|H(\omega_{r})\|_{\Sp}\leq \|H(\omega)\|_{\Sp}.$
\end{enumerate}
\end{subsection}
\begin{subsection}{Almost Orthogonal and Almost Symmetric Hankel operators}
Recall that for a function  $\eta:\bbT\to \bbC$,  its \textit{singular support}, denoted $\singsupp{\eta}$, is defined as the smallest closed subset, $\M$, of $\bbT$ such that $\eta \in C^{\infty}(\bbT\backslash \M)$. 
\begin{lemma}\label{lemma:lemma4.3}
The following statements hold
\begin{enumerate}[label=(\roman*)]
\item \label{item:item4.3.1} Let $\omega_{1}, \omega_{2}\, \in L^{\infty}(\bbT)$ have disjoint singular supports. Set $(\omega_{i})_{r}=P_{r}\ast \omega_{i}, i=1,2$. Then
\begin{align*}
&\sup_{r<1}\|H((\omega_{1})_{r})^{\ast}H((\omega_{2})_{r})\|_{\frakS_{1}}<\infty,\quad \sup_{r<1}\|H((\omega_{1})_{r})H((\omega_{2})_{r})^{\ast}\|_{\frakS_{1}}<\infty;
\end{align*}
\item \label{item:item4.3.2} Suppose $\omega \in L^{\infty}(\bbT)$ be such that $\pm 1 \notin \singsupp(\omega).$ Let $\fraks(v)=\sign(\Im(v)),\, v\in \bbT$. Then $$\sup_{r<1}\|\fraks H(\omega_r)+H(\omega_r)\fraks\|_{\frakS_{1}}<\infty.$$ 
\end{enumerate}
\end{lemma}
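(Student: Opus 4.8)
The plan is to handle (i) and (ii) by the same ``localisation'' mechanism. In all cases I would reduce, via a smooth partition of unity, to symbols concentrated near the singular points; discard the smooth remainders using property (ii) of the map $H(\omega)\mapsto H(\omega_{r})$ above, together with the fact that $H(\eta)\in\frakS_{0}$ for $\eta\in C^{\infty}(\bbT)$ (since $\|H(\eta)\|_{\frakS_{1}}\leq\sum_{n\geq 0}(n+1)|\wh{\eta}(n)|<\infty$); and exploit the disjointness of the relevant singular supports by inserting a fixed cutoff $\chi\in C^{\infty}(\bbT)$ and appealing to the two estimates, both uniform in $s\in(0,1)$,
\[
\|[P_{+},M_{\chi}]\|_{\frakS_{1}}\leq C\sum_{d}|d|\,|\wh{\chi}(d)|,\qquad \|[C_{s},M_{\chi}]\|_{\frakS_{1}}\leq C\sum_{d}|d|\,|\wh{\chi}(d)|,
\]
which follow from the crude bound $\|A\|_{\frakS_{1}}\leq\sum_{j,k}|A_{jk}|$ applied to the off-diagonal matrices with entries $\wh{\chi}(j-k)(\1[j\geq0]-\1[k\geq0])$ and $\wh{\chi}(j-k)(s^{|j|}-s^{|k|})$ (using $\sum_{j}|s^{|j|}-s^{|j+d|}|\leq C|d|$ for the latter). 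Throughout, $P_{-}:=I-P_{+}$, $M_{g}$ is multiplication by $g$ on $\Ltwo$, $C_{s}$ convolution by $P_{s}$, $Jf(v)=f(\overline{v})$, and $[A,B]=AB-BA$.

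For (i) I would first write $\omega_{i}=a_{i}+\eta_{i}$, where $a_{i}=\phi_{i}\omega_{i}$ is supported in an arc $\Delta_{i}$ ($\phi_{i}\in C^{\infty}$ equal to $1$ near $\singsupp\omega_{i}$, the arcs $\Delta_{1},\Delta_{2}$ disjoint) and $\eta_{i}\in C^{\infty}(\bbT)$. Since $\sup_{r}\|H((\eta_{i})_{r})\|_{\frakS_{1}}\leq\|H(\eta_{i})\|_{\frakS_{1}}<\infty$ and $\|H((a_{j})_{r})\|\leq\|a_{j}\|_{\infty}$, every term of $H((\omega_{1})_{r})^{\ast}H((\omega_{2})_{r})$ involving an $\eta_{i}$ is uniformly trace class by H\"older, and it remains to bound $\|H((a_{1})_{r})^{\ast}H((a_{2})_{r})\|_{\frakS_{1}}$ uniformly. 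The key move is to use $H(\omega_{r})=C_{r}H(\omega)C_{r}$ with $\|C_{r}\|\leq 1$ and $C_{r}^{2}=C_{r^{2}}$: this reduces the task to $\|H(a_{1})^{\ast}C_{r^{2}}H(a_{2})\|_{\frakS_{1}}$, and now $a_{1},a_{2}$ are the \emph{unsmeared} symbols, with disjoint supports. Since $H(a_{1})^{\ast}=H(\overline{\widetilde{a_{1}}})$, a computation on $\Ltwo$ using $JP_{+}J=P_{-}+E_{0}$ ($E_{0}$ the projection onto the constants) and $C_{s}E_{0}=E_{0}$ gives, modulo a rank-one term of norm $\leq\|a_{1}\|_{\infty}\|a_{2}\|_{\infty}$,
\[
H(a_{1})^{\ast}C_{s}H(a_{2})\;=\;P_{+}\,M_{b}\,C_{s}\,P_{-}\,M_{\widetilde{a_{2}}}\,P_{+},\qquad b:=\overline{\widetilde{a_{1}}},
\]
with $\supp b$ and $\supp\widetilde{a_{2}}$ disjoint arcs. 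Choosing $\chi$ equal to $1$ near $\supp b$ and $0$ near $\supp\widetilde{a_{2}}$ and writing $b=b\chi$, $\widetilde{a_{2}}=(1-\chi)\widetilde{a_{2}}$, one expands $M_{\chi}C_{s}P_{-}M_{1-\chi}$ into: a term proportional to $M_{\chi(1-\chi)}$, annihilated on the left by $M_{b}$ because $b(1-\chi)\equiv 0$; and two commutator terms ($M_{\chi}[C_{s},M_{1-\chi}]P_{-}$ and $M_{\chi}C_{s}[P_{-},M_{1-\chi}]$, with $[P_{-},M_{1-\chi}]=[P_{+},M_{\chi}]$) that are in $\frakS_{1}$ uniformly by the estimates above. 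This yields a bound independent of $s$, proving the first inequality of (i); the second follows upon replacing $\omega_{2}$ by $\overline{\widetilde{\omega_{2}}}$.

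For (ii) I would use that $M_{\fraks}$ is unitary ($|\fraks|\equiv 1$) and $JM_{\fraks}=-M_{\fraks}J$ (as $\fraks(\overline{v})=-\fraks(v)$): commuting $M_{\fraks}$ through $H(\omega_{r})=P_{+}M_{\omega_{r}}JP_{+}$, the Toeplitz-type contributions $\pm P_{+}M_{\fraks\omega_{r}}JP_{+}$ cancel and one is left with
\[
\fraks\,H(\omega_{r})+H(\omega_{r})\,\fraks\;=\;-[P_{+},M_{\fraks}]\,M_{\omega_{r}}\,JP_{+}\;+\;P_{+}M_{\omega_{r}}\,J\,[P_{+},M_{\fraks}].
\]
Since $\singsupp\fraks=\{1,-1\}$ is disjoint from $\singsupp\omega$, I would pick nested $\psi,\psi'\in C^{\infty}(\bbT)$ supported near $\{\pm 1\}$ and away from $\singsupp\omega$: then $(1-\psi)\fraks\in C^{\infty}(\bbT)$, so $[P_{+},M_{\fraks}]$ may be replaced by $[P_{+},M_{\psi\fraks}]$ up to a uniformly $\frakS_{1}$ term, and $\psi'\omega_{r}$ is $C^{\infty}(\bbT)$ with $C^{k}$-norms bounded uniformly in $r$ (pseudo-locality of the Poisson integral, $\omega$ being smooth on $\supp\psi'$). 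Splitting $M_{\omega_{r}}=M_{\psi'\omega_{r}}+M_{(1-\psi')\omega_{r}}$: in the $(1-\psi')$-pieces $\psi\fraks$ and $(1-\psi')$ have disjoint supports, so the cutoff argument of (i) applies directly (no $C_{s}$ present; one uses only $\|M_{(1-\psi')\omega_{r}}\|\leq\|\omega\|_{\infty}$ and $[P_{+},M_{\chi}]\in\frakS_{1}$); in the $\psi'$-pieces, uniform smoothness of $\psi'\omega_{r}$ reduces both terms of the display, modulo uniformly $\frakS_{1}$ errors, to $\mp H(\psi\psi'\fraks\,\omega_{r})$, which cancel. Hence the anticommutator lies in $\frakS_{1}$ uniformly in $r$.

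The step I expect to be the main obstacle is maintaining \emph{uniformity} as $r\to 1$: applied naively, quantities like $\|H((\omega_{1})_{r}(\omega_{2})_{r})\|_{\frakS_{1}}$ diverge because the convolved symbols $(\omega_{i})_{r}$ lose their smoothness. In (i) this forces the detour through the contractions $C_{r}$ — which is precisely why one must establish the genuinely $r$-dependent estimate $\sup_{s}\|[C_{s},M_{\chi}]\|_{\frakS_{1}}<\infty$ by hand — and in (ii) it makes the cancellation between the two terms of the reduced anticommutator essential, since neither term is individually trace class near $\pm 1$.
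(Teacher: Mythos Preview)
Your argument is correct, and for part (ii) it is close to the paper's in spirit (both compute the anticommutator as a sum of commutator terms and localise with cutoffs near $\pm 1$; the paper uses the identity $[\fraks,P_{+}]\zeta=\fraks[P_{+},\zeta]+[\fraks\zeta,P_{+}]$ in place of your explicit cancellation, but the content is the same). For part (i), however, you take a genuinely different route.

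\textbf{Comparison for (i).} The paper never factors out the contractions $C_{r}$. Instead it multiplies the \emph{smeared} symbols by cutoffs, writing $(\omega_{i})_{r}=\zeta_{i}(\omega_{i})_{r}+\widetilde{\zeta}_{i}(\omega_{i})_{r}$, and the $r$-uniformity is obtained from pseudo-locality of the Poisson integral: since $\widetilde{\zeta}_{i}$ vanishes near $\singsupp\omega_{i}$, the function $\widetilde{\zeta}_{i}(\omega_{i})_{r}$ is uniformly $C^{2}$, so $\|H(\widetilde{\zeta}_{i}(\omega_{i})_{r})\|_{\frakS_{1}}$ is bounded by Lemma~\ref{lemma:lemma4.1}(ii). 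The remaining ``bad$\times$bad'' term is handled by noting that $\overline{\zeta}_{1}P_{+}\zeta_{2}$ has smooth integral kernel (disjoint supports), hence is trace class independently of $r$. Your approach instead pushes the $r$-dependence into a single operator $C_{r^{2}}$ sandwiched between \emph{unsmeared} Hankel factors, and isolates the uniformity into the clean estimate $\sup_{s}\|[C_{s},M_{\chi}]\|_{\frakS_{1}}<\infty$. This is more modular and avoids any pseudo-locality argument in (i); the paper's version is slightly shorter because it reuses the same $C^{2}$ mechanism in both parts.

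\textbf{Two minor wrinkles.} First, the second inequality in (i) does not literally follow by replacing $\omega_{2}$ with $\overline{\widetilde{\omega_{2}}}$: that reflects $\singsupp\omega_{2}$, which may then meet $\singsupp\omega_{1}$. Rather, redo the computation for $H(a_{1})C_{s}H(a_{2})^{\ast}$ directly; it produces $P_{+}M_{a_{1}}P_{-}C_{s}P_{-}M_{\overline{a_{2}}}P_{+}$ (no tilde), with $\supp a_{1}$ and $\supp\overline{a_{2}}=\supp a_{2}$ still disjoint, so your cutoff argument applies unchanged. Second, the cancellation of the $\psi'$-pieces in (ii) uses $\widetilde{\psi\fraks}=-\psi\fraks$, which requires $\psi(\overline{v})=\psi(v)$; this should be stated (the paper makes the same symmetry choice for its cutoff $\zeta$).
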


\begin{remark}
Similar results are already known in the literature, but only from a qualitative standpoint. In fact, under the same assumptions of \ref{item:item4.3.1}, it is known that both $H(\omega_{1})^{\ast}H(\omega_{2})$ and $H(\omega_{2})^{\ast}H(\omega_{1}) \in \frakS_{0}$. Similarly for \ref{item:item4.3.2}, it is also known that $\fraks H(\omega)+H(\omega)\fraks \in \frakS_{0}$. For a proof of both facts see \cite[Lemma 2.5]{P-Y-Localization} and \cite[Lemma 4.2]{P-Y-Asymptotics} respectively, even though both facts are already mentioned in \cite{Pow-Hankel}.
\end{remark}
\noindent To prove the statements in Lemma \ref{lemma:lemma4.3}, we use the following:
\begin{lemma}\label{lemma:lemma4.1}\ \begin{enumerate}[label=(\roman*)]
\item\label{item:item4.1.1} if $K$ is an operator on $\Ltwo$ with integral kernel $k \in C^{\infty}(\bbT^{2})$, then $K \in \frakS_{1}$;
\item\label{item:item4.1.2} $H(\omega) \in \frakS_{1}$ if $\omega \in C^{2}(\bbT)$ and furthermore there exists $C>0$ such that:$$\|H(\omega)\|_{\frakS_{1}}\leq \|\omega\|_{\infty}+C\|\omega''\|_{2}.$$
\item\label{item:item4.1.3} if $\omega \in C^{2}(\bbT)$, the commutator $[P_{+}, \omega]$ is trace-class.
\end{enumerate}
\end{lemma}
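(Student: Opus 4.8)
The plan is to establish the three statements in order, deducing the third from the second. For the first statement, I would expand the kernel in a double Fourier series $k(z,w)=\sum_{m,n\in\bbZ}a_{m,n}z^{m}\bar w^{n}$ with $a_{m,n}=(k,z^{m}\bar w^{n})_{L^{2}(\bbT^{2})}$; repeated integration by parts in each variable shows that $k\in C^{\infty}(\bbT^{2})$ forces $|a_{m,n}|\le C_{M}(1+|m|+|n|)^{-M}$ for every $M\ge1$. Since the associated integral operator is $K=\sum_{m,n}a_{m,n}\,(\,\cdot\,,w^{n})\,z^{m}$, a sum of rank-one operators each of $\frakS_{1}$-norm $1$, taking $M=3$ gives $\|K\|_{\frakS_{1}}\le\sum_{m,n}|a_{m,n}|<\infty$. (Alternatively one can factor $K=\Lambda^{-1}(\Lambda K\Lambda)\Lambda^{-1}$ with $\Lambda=I-d^{2}/d\theta^{2}$: here $\Lambda^{-1}\in\frakS_{2}$ since its eigenvalues $(1+n^{2})^{-1}$ are square-summable, while $\Lambda K\Lambda$ has a smooth, hence $L^{2}(\bbT^{2})$, kernel and is therefore Hilbert--Schmidt, so $K\in\frakS_{2}\cdot\Sinfty\cdot\frakS_{2}\subseteq\frakS_{1}$ by H\"older's inequality for Schatten classes.)

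For the second statement, recall from the block representation above that $H(\omega)$ has the same non-zero singular values as the Hankel matrix $\Gam(\wh{\omega})$, so it is enough to bound $\|\Gam(\wh{\omega})\|_{\frakS_{1}}$. I would split $\Gam(\wh{\omega})$ along anti-diagonals, $\Gam(\wh{\omega})=\sum_{n\ge0}\wh{\omega}(n)\Gam_{n}$, where $\Gam_{n}$ has entry $1$ at each $(j,k)$ with $j+k=n$ and $0$ otherwise. The operator $\Gam_{n}$ sends $e_{k}\mapsto e_{n-k}$ for $0\le k\le n$ and kills $e_{k}$ for $k>n$, hence is a partial isometry of rank $n+1$ with $\|\Gam_{n}\|_{\frakS_{1}}=n+1$. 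Thus $\|\Gam(\wh{\omega})\|_{\frakS_{1}}\le\sum_{n\ge0}(n+1)|\wh{\omega}(n)|$: the term $n=0$ is at most $\|\omega\|_{\infty}$, and for $n\ge1$ two integrations by parts give $\wh{\omega}(n)=-n^{-2}\wh{\omega''}(n)$, so $(n+1)|\wh{\omega}(n)|\le2n^{-1}|\wh{\omega''}(n)|$, whence by Cauchy--Schwarz and Parseval $\sum_{n\ge1}2n^{-1}|\wh{\omega''}(n)|\le2\big(\sum_{n\ge1}n^{-2}\big)^{1/2}\|\omega''\|_{2}=:C\|\omega''\|_{2}$. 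Since $\omega\in C^{2}$ makes $\sum_{n}(n+1)|\wh{\omega}(n)|$ finite, the anti-diagonal series converges in $\frakS_{1}$ and the stated bound follows.

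For the third statement, write $P_{-}=I-P_{+}$, so that $[P_{+},\omega]=P_{+}\,\omega\,P_{-}-P_{-}\,\omega\,P_{+}$, and it suffices to prove each summand is trace class. In the orthonormal basis $\{z^{n}\}_{n\in\bbZ}$ of $\Ltwo$, multiplication by $\omega$ has matrix $(m,n)\mapsto\wh{\omega}(m-n)$, so $[P_{+},\omega]$ has matrix $(\1_{m\ge0}-\1_{n\ge0})\wh{\omega}(m-n)$, supported on the two off-diagonal blocks $\{m\ge0>n\}$ and $\{n\ge0>m\}$. After the relabellings $n=-k-1$ and $m=-l-1$, the first block is the Hankel matrix $\{\wh{\omega}(m+k+1)\}_{m,k\ge0}=\Gam(\wh{\bar z\omega})$ and the second is $-\overline{\Gam(\wh{\bar z\bar\omega})}$; since $\bar z\omega$ and $\bar z\bar\omega$ lie in $C^{2}(\bbT)$ whenever $\omega$ does, the second statement shows both blocks belong to $\frakS_{1}$, hence so does $[P_{+},\omega]$. (Equivalently, one can conjugate $P_{-}\,\omega\,P_{+}$ by the unitary involution $(Wf)(v)=\bar v f(\bar v)$, which turns $P_{+}$ into $P_{-}$ and multiplication by $\omega(v)$ into multiplication by $\omega(\bar v)$, and then recognise it, up to the unitary $W$, as the Hankel operator with symbol $v\mapsto\bar v\omega(\bar v)$.)

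The only genuinely non-routine point is the exact bookkeeping in the second statement: recognising the anti-diagonal decomposition with $\|\Gam_{n}\|_{\frakS_{1}}=n+1$, and then seeing that the $C^{2}$ hypothesis is precisely what makes $\sum_{n}(n+1)|\wh{\omega}(n)|$ converge, with the $\|\omega''\|_{2}$ on the right produced by Cauchy--Schwarz. The third statement is then a matter of correctly matching the two commutator blocks with Hankel matrices of $C^{2}$ symbols, while the first is standard.
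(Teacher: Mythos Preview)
Your proof is correct. Part (i) matches the paper's hint (approximation by trigonometric polynomials), and part (iii) follows the same reduction to (ii) as the paper, though you identify the two blocks of $[P_+,\omega]$ directly as Hankel matrices in the Fourier basis while the paper instead uses the identity $P_- = JP_+J - P_+JP_+$ to write $[P_+,\omega]=H(\omega)J-JH(\overline{\omega})^{\ast}+\text{(rank one terms)}$; the content is the same.

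The genuine difference is in part (ii). You decompose $\Gam(\wh{\omega})$ along anti-diagonals as $\sum_{n\ge0}\wh{\omega}(n)\Gam_n$ with $\|\Gam_n\|_{\frakS_1}=n+1$, obtaining $\|\Gam(\wh{\omega})\|_{\frakS_1}\le\sum_{n\ge0}(n+1)|\wh{\omega}(n)|$, and then control this sum by Cauchy--Schwarz and Parseval. The paper instead uses the approximation-number characterisation $s_N(A)=\inf\{\|A-B\|:\operatorname{rank}B\le N-1\}$: since the partial sum $\omega_{N-1}$ gives $\operatorname{rank}H(\omega_{N-1})\le N-1$ and $\|\omega-\omega_{N-1}\|_\infty\le C\|\omega''\|_2\,(N-1)^{-3/2}$, one gets $s_N(H(\omega))\le C\|\omega''\|_2\,(N-1)^{-3/2}$ for $N\ge2$ and then sums. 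Your argument is slightly more elementary (no need for the min--max principle) and exploits the specific anti-diagonal structure of Hankel matrices; the paper's argument gives, as a byproduct, pointwise control of each singular value $s_N$, not just their sum, and would generalise more readily to other approximation-rate hypotheses on $\omega$. Both yield the same inequality with an explicit constant.
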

\begin{proof}[Proof of Lemma \ref{lemma:lemma4.1}]
\ref{item:item4.1.1} is folklore. It can be proved by approximating the kernel $k$ by trigonometric polynomials.

\noindent Let us prove \ref{item:item4.1.2}. First, recall two facts:
\begin{enumerate}[label=(\alph*)]
\item any $\omega \in C^{2}(\bbT)$ is the uniform limit of the sequence $$\omega_{N}(v)=\sum_{\abs{j}\leq N-1}{\widehat{\omega}(j)v^{j}},\ \ \ v \in \bbT.$$ Thus for any $N$ and $v \in \bbT$, the Cauchy-Schwarz inequality together with Plancherel Identity give:
\begin{align*}
\left|\omega(v)-\omega_{N}(v)\right|&\leq\|\omega''\|_{2}\left(2\sum_{j\geq N}j^{-4}\right)^{1/2}\leq C  \|\omega''\|_{2} N^{-3/2}.
\end{align*}
\item for $A \in \Sinfty$ one has $s_{N}(A)=\inf\{\|A-B\|\  |\ rank(B)\leq N-1\},\ N\geq 1,$ and, in particular, $s_{1}(A)=\|A\|.$
\end{enumerate}
Putting these two facts together and noting that $rank\left(H(\omega_{N})\right)\leq N$, we have that for $N\geq 2$:
\begin{align*}
s_{N}(H(\omega))&=\inf\{\|H(\omega)-B\|\ |\ rank(B)\leq N-1\}\\
				&\leq \|H(\omega)-H(\omega_{N-1})\|\\
				&\leq \|\omega-\omega_{N-1}\|_{\infty}\\
				&\leq C\frac{\|\omega''\|_{2}}{(N-1)^{3/2}}.
\end{align*}
Thus we see that $H(\omega) \in \frakS_{1}$ and, furthermore, 
\begin{gather*}
\|H(\omega)\|_{\frakS_{1}}=s_{1}(H(\omega))+\sum_{n\geq 2}{s_{n}(H(\omega))}\leq \|\omega\|_{\infty}+C \|\omega''\|_{2}.
\end{gather*}
\ref{item:item4.1.3}. Write $P_{-}=I-P_{+}$, where $I$ is the identity operator. Since $P_{+}$ is a projection and $P_{+}P_{-}=P_{-}P_{+}=0$, one has $$[P_{+}, \omega]=[P_{+},(P_{+}+P_{-}) \omega](P_{+}+P_{-})=P_{+}\omega P_{-}-P_{-}\omega P_{+}.$$ Using the identity $P_{-}=JP_{+}J-P_{+}JP_{+}$, it follows that $$[P_{+}, \omega]=H(\omega)J-JH(\overline{\omega})^{\ast}-P_{+}\omega P_{+}JP_{+}+P_{+}JP_{+}\omega P_{+}.$$
Since $P_{+}JP_{+}$ is a rank-one operator (projection onto constants), $[P_{+}, \omega]$ is trace-class if and only if $H(\omega)J-JH(\overline{\omega})^{\ast}$ is, which follows immediately from \ref{item:item4.1.2}.
\end{proof}
With these facts at hand, we are now ready to prove Lemma\ref{lemma:lemma4.3}.
\begin{proof}[Proof of Lemma \ref{lemma:lemma4.3}]
\ref{item:item4.3.1}: we will only show the first inequality, as the second can be proved in the same way.  From the assumptions on $\omega_{1}, \omega_{2}$, we can find $\zeta_{1}, \zeta_{2}\ \in\ C^{\infty}(\bbT)$ such that $\supp\zeta_{1}\cap \supp\zeta_{2}=\varnothing$ and such that $(1-\zeta_{i})\omega_{i}$ vanishes identically in a neighbourhood of $\singsupp\omega_{i}$. We will repeatedly use the following two facts:
\begin{enumerate}[label=(\alph*)]
\item \label{item:facta} for any $\phi \in L^\infty(\bbT)$, Young's inequality holds, i.e one has the estimate:
\begin{gather}\label{eqn:youngs}
\|P_{r}\ast\phi\|_{\infty}\leq \|\phi\|_{\infty};
\end{gather}
\item \label{item:factb} one has that $P_{r}\ast \omega \in C^{\infty}(\bbT)$ and furthermore $(P_{r}\ast \omega) \to \omega$ as $r\to 1-$ locally uniformly on $\bbT \setminus \singsupp{\omega}$. The same is true for its derivatives $(P_{r}\ast \omega)^{(n)}$. 
\end{enumerate}

\noindent We set $\widetilde{\zeta}_{i}=1-\zeta_{i},\, i=1,2$ and use the triangle inequality to obtain
\begin{align*}
\|H((\omega_{1})_{r})^{\ast}H((\omega_{2})_{r})\|_{\frakS_{1}}&\leq\|H(\widetilde{\zeta}_{1}(\omega_{1})_{r})^{\ast}H(\widetilde{\zeta}_{2}(\omega_{2})_{r})\|_{\frakS_{1}}
														 +\|H(\widetilde{\zeta}_{1}(\omega_{1})_{r})^{\ast}H(\zeta_{2}(\omega_{2})_{r})\|_{\frakS_{1}}\\
														 &+\|H(\zeta_{1}(\omega_{1})_{r})^{\ast}H(\widetilde{\zeta}_{2}(\omega_{2})_{r})\|_{\frakS_{1}}
														 +\|H(\zeta_{1}(\omega_{1})_{r})^{\ast}H(\zeta_{2}(\omega_{2})_{r})\|_{\frakS_{1}},
\end{align*}
from which we see that it is sufficient to find uniform bounds for each summand above.

\noindent Recall that $H(\omega_{i})=P_{+}\omega_{i}JP_{+}$ and $P_{+}$ is a projection, thus: $$H(\zeta_{1}(\omega_{1})_{r})^{\ast}H(\zeta_{2}(\omega_{2})_{r})=P_{+}J\overline{\zeta_{1}(\omega_{1})_{r}}P_{+}\zeta_{2}(\omega_{2})_{r}JP_{+}.$$ Since $\zeta_{1}$ and $\zeta_{2}$ have disjoint supports, the operator $\overline{\zeta}_{1}P_{+}\zeta_{2}$ has a $C^{\infty}(\bbT^{2})$ integral kernel given by $$\frac{\overline{\zeta}_{1}(z)\zeta_{2}(v)}{v-z}v,\quad v,\, z \in \bbT .$$ Lemma \ref{lemma:lemma4.1}-\ref{item:item4.1.1} shows that $\overline{\zeta}_{1}P_{+}\zeta_{2} \in \frakS_{1}$. Furthermore, using H\"older inequality for the Schatten classes and \eqref{eqn:youngs}, we deduce that
\begin{align*}
\sup_{r<1}\|H(\zeta_{1}(\omega_{1})_{r})^{\ast}H(\zeta_{2}(\omega_{2})_{r})\|_{\frakS_{1}}\leq\|\omega_{1}\|_{\infty}\|\omega_{2}\|_{\infty}\|\overline{\zeta}_{1}P_{+}\zeta_{2}\|_{\frakS_{1}}<\infty.
\end{align*} 

\noindent By Lemma \ref{lemma:lemma4.1}-\ref{item:item4.1.2}, we also have that $H(\widetilde{\zeta}_{1}(\omega_{1})_{r}) \in \frakS_{1}$ and furthermore: 
\begin{align}\notag
\|H(\widetilde{\zeta}_{1}(\omega_{1})_{r})^{\ast}H(\zeta_{2}(\omega_{2})_{r})\|_{\frakS_{1}}&\leq\|H(\widetilde{\zeta}_{1}(\omega_{1})_{r})\|_{\frakS_{1}}\|H(\zeta_{2}(\omega_{2})_{r})\| \\ \label{eqn:estimate}
&\leq  \|\zeta_{2}\|_{\infty}\|\omega_{2}\|_{\infty}(C\|(\widetilde{\zeta}_{1}(\omega_{1})_{r})''\|_{2}+\|\omega_{1}\|_{\infty}),
\end{align}
for some $C>0$ independent of $r$. In \eqref{eqn:estimate} we used once more the H\"older inequality for Schatten classes together with the estimates \eqref{eqn:normestimate} and \eqref{eqn:youngs}. 

From \ref{item:factb} and the fact that $\widetilde{\zeta_{i}}\omega_{i}$ vanishes identically on $\singsupp{\zeta_{i}}$, we conclude that $(\widetilde{\zeta}_{i}(\omega_{i})_{r})''\to (\widetilde{\zeta}_{i}\omega_{i})''$ uniformly on the whole of $\bbT$, and so
\begin{align}\label{eqn:uniformderivative}
\sup_{r<1}\|(\widetilde{\zeta}_{i}(\omega_{i})_{r})''\|_{2}<\infty.
\end{align}
Using \eqref{eqn:uniformderivative} in \eqref{eqn:estimate} finally gives $$\sup_{r<1}{\|H(\widetilde{\zeta}_{1}(\omega_{1})_{r})^{\ast}H(\zeta_{2}(\omega_{2})_{r})\|_{\frakS_{1}}}<\infty.$$
Similarly one can show that
\begin{align*}
\sup_{r<1}{\|H(\widetilde{\zeta_{1}}(\omega_{1})_{r})^{\ast}H(\widetilde{\zeta}_{2}(\omega_{2})_{r})\|_{\frakS_{1}}}<\infty,\quad \sup_{r<1}{\|H(\zeta_{1}(\omega_{1})_{r})^{\ast}H(\widetilde{\zeta}_{2}(\omega_{2})_{r})\|_{\frakS_{1}}}<\infty.
\end{align*}
\ref{item:item4.3.2} Since $\pm 1 \notin \singsupp{\omega}$, then we can write $\omega=\phi+\eta$ for some $\eta \in C^{\infty}(\bbT)$ and some $\phi$ vanishing identically in a neighbourhood $U$ of $\pm 1$. With this decomposition of $\omega$, we can see that $$H(\omega_{r})=H(\phi_{r})+H(\eta_{r}).$$ Since $\eta$ is smooth, then $H(\eta) \in \frakS_{1}$ and so the triangle inequality and H\"older inequality for Schatten classes imply that $$\sup_{r<1}\|\fraks H(\omega_{ r})+H(\omega_{r})\fraks\|_{\frakS_{1}}\leq 2 \|H(\eta)\|_{\frakS_{1}}+\sup_{r<1}\|\fraks H(\phi_{ r})+H(\phi_{r})\fraks\|_{\frakS_{1}}.$$ So it is sufficient to consider those symbols $\omega$ vanishing on a neighbourhood, $U$, of $\pm 1$.

\noindent Fix a smooth function $\zeta$ such that $0\leq \zeta\leq 1$,  it vanishes identically on some open $V\subset U$ so that $\pm 1 \in V$, $\zeta \equiv 1$ on $\bbT\backslash U$ and such that $\zeta(v)=\zeta(\overline{v}),\, v\in \bbT$. We can write:
\begin{align}\notag
\fraks H(\omega_r)+H(\omega_r)\fraks&=\fraks H((1-\zeta)\omega_r)+H((1-\zeta)\omega_r)\fraks\\ \label{eqn:decomposition}
									&+\fraks H(\zeta\omega_r)+H(\zeta\omega_r)\fraks
\end{align}
Let us study these operators  more closely.
Using the triangle inequality, we obtain that 
\begin{equation}\label{eqn:Ar1}
\sup_{r<1}\|\fraks H((1-\zeta)\omega_r)+H((1-\zeta)\omega_r)\fraks\|_{\frakS_{1}}\leq 2\sup_{r<1}\|H((1-\zeta)\omega_r)\|_{\frakS_{1}}.
\end{equation}
Using \ref{item:factb} and the fact that $(1-\zeta)\omega \equiv 0$ on $\bbT$, we conclude that $((1-\zeta)\omega_{r})''\to 0$ on $\bbT$ and so Lemma \ref{lemma:lemma4.1}-\ref{item:item4.1.2} gives
\begin{equation}\label{eqn:Ar2}
\sup_{r<1}\|H((1-\zeta)\omega_{r})\|_{\frakS_{1}}\leq \sup_{r<1}(\|\omega\|_{\infty}+C\|((1-\zeta)\omega_{r})''\|_{2})<\infty. 
\end{equation}
For the operators appearing in the second line of \eqref{eqn:decomposition}, write $$\fraks H(\zeta\omega_r)+H(\zeta\omega_r)\fraks=\left(\left[\fraks, P_{+}\right]\zeta\right)\omega_{r}JP_{+}+P_{+}\omega_{r}J\left(\zeta\left[P_{+}, \fraks\right]\right).$$ Let us now prove that the commutators $\left[\fraks, P_{+}\right]\zeta,\, \zeta\left[\fraks, P_{+}\right] \in \frakS_{1}$. By our choice of $\fraks$ and $\zeta$, we have $J\fraks=-\fraks J$ and $J\zeta=\zeta J$, whence it follows that
\begin{align*}
\left[\fraks, P_{+}\right]\zeta&=\fraks P_{+}\zeta-\fraks \zeta P_{+}+\fraks \zeta P_{+}-P_{+}\fraks\zeta=\fraks\left[P_{+}, \zeta\right]+\left[\fraks \zeta, P_{+}\right],\\
\zeta\left[\fraks, P_{+}\right]&=\zeta\fraks P_{+}-P_{+}\fraks \zeta+P_{+}\fraks \zeta -\zeta P_{+}\fraks=\left[\fraks \zeta, P_{+}\right]+\left[P_{+}, \zeta\right]\fraks.
\end{align*}
Furthermore, our choice of $\zeta$ gives that the product $\fraks\zeta \in C^{\infty}(\bbT)$, and Lemma \ref{lemma:lemma4.1}-\ref{item:item4.1.3} together with \eqref{eqn:youngs} implies that
\begin{equation}\label{eqn:Br}
\sup_{r<1}\|\fraks H(\zeta\omega_r)+H(\zeta\omega_r)\fraks\|_{\frakS_{1}}\leq \|\omega\|_{\infty}(\|\left[\fraks, P_{+}\right]\zeta\|_{\frakS_{1}}+\|\zeta\left[P_{+}, \fraks\right]\|_{\frakS_{1}})<\infty.
\end{equation}
Putting together \eqref{eqn:Ar1}, \eqref{eqn:Ar2} and \eqref{eqn:Br} and using the triangle inequality on \eqref{eqn:decomposition} gives the assertion.
\end{proof}

\end{subsection}
\begin{subsection}{Spectral density of our model operator: the Hilbert matrix}
An important ingredient to the proof of all our results is the model operator for which it is possible to explicitly compute the spectral density. Following the ideas of previous works, \cite{Pow-Disc, P-Y-Spectral}, a natural candidate is the Hilbert matrix, given by the symbol $\gamma$ defined in \eqref{eqn:hilbsymb}. Putting together the result of Widom, see \cite[Theorem 5.1]{Widom} and the Invariance Principle \ref{thm:invprinc}, we obtain
\begin{proposition}\label{prop:prop3.1}
 Let $\tau$ satisfy assumptions \ref{(B)} and \ref{(C)}, then one has that $$\uld_{\tau}(t;\Gam(\widehat{\gamma}))=\lld_{\tau}(t;\Gam(\widehat{\gamma}))=\mathsf{c}(t),\ \ \ t>0,$$ where $\mathsf{c}$ has been defined in \eqref{eqn:hilbdensity}. If $\tau(x,y)=\overline{\tau(y,x)}$, then we also have \begin{gather*}
\uld^{+}_{\tau}(t;\Gam(\wh{\gamma}))=\lld^{+}_{\tau}(t;\Gam(\wh{\gamma}))=\mathsf{c}(t),\\ \uld^{-}_{\tau}(t;\Gam(\wh{\gamma}))=\lld^{-}_{\tau}(t;\Gam(\wh{\gamma}))=0.
\end{gather*}
\end{proposition}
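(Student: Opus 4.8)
The plan is to derive the Proposition by combining Widom's explicit evaluation of the spectral density for the square truncation with the Invariance Principle, Theorem~\ref{thm:invprinc}. The first thing to record is that the Hilbert matrix belongs to the class $\calB_{0}$: since $\wh{\gamma}(n)=(\pi(n+1))^{-1}$, we have $\Gam(\wh{\gamma})_{j,k}=(\pi(j+k+1))^{-1}=\bo\bigl((j+k)^{-1}\bigr)$ as $j,k\to\infty$, so $\Gam(\wh{\gamma})\in\calB_{0}$ and Theorem~\ref{thm:invprinc} is applicable to $A=\Gam(\wh{\gamma})$. Moreover $\wh{\gamma}$ is real-valued, so $\Gam(\wh{\gamma})$ is self-adjoint, which is what the $\ld^{\pm}_{\tau}$ statements require.

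Next I would check that the square-truncation multiplier $\tau_{\square}$ of \eqref{eqn:square} satisfies Assumptions~\ref{(B)} and \ref{(C)} (Assumption~\ref{(A)} plays no part in Theorem~\ref{thm:invprinc}). Assumption~\ref{(B)} is immediate, since $\tau_{\square}\equiv1$ on $[0,1)^{2}$, so $\abs{\tau_{\square}(x,y)-1}=0$ for all sufficiently small $x,y$; and Assumption~\ref{(C)} holds because $0\le\tau_{\square}\le1$ is supported in $[0,1)^{2}$, on which $\log(x+y+2)^{-\alpha}\ge\log(4)^{-\alpha}>0$, so one may take $C_{\alpha}=\log(4)^{\alpha}$. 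With this, Widom's theorem, in the corrected form recorded in \eqref{eqn:limit1}--\eqref{eqn:limit2} (the factor of $2\pi$ being reinstated as noted after \eqref{eqn:hilbdensity}), says that the limit $\ld_{\square}(t;\Gam(\wh{\gamma}))$ exists and equals $\mathsf{c}(t)$; in the notation of Section~2 this reads $\uld_{\tau_{\square}}(t;\Gam(\wh{\gamma}))=\lld_{\tau_{\square}}(t;\Gam(\wh{\gamma}))=\mathsf{c}(t)$ for every $t>0$, and, $\tau_{\square}$ being real and symmetric, also $\uld^{+}_{\tau_{\square}}(t;\Gam(\wh{\gamma}))=\lld^{+}_{\tau_{\square}}(t;\Gam(\wh{\gamma}))=\mathsf{c}(t)$ and $\uld^{-}_{\tau_{\square}}(t;\Gam(\wh{\gamma}))=\lld^{-}_{\tau_{\square}}(t;\Gam(\wh{\gamma}))=0$.

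Now I would apply Theorem~\ref{thm:invprinc} with $\tau_{1}=\tau_{\square}$ and $\tau_{2}=\tau$ (using $\tau(x,y)=\overline{\tau(y,x)}$ for the self-adjoint part), which yields
\begin{gather*}
\uld_{\tau_{\square}}(t+0;\Gam(\wh{\gamma}))\le\uld_{\tau}(t;\Gam(\wh{\gamma}))\le\uld_{\tau_{\square}}(t-0;\Gam(\wh{\gamma})),
\end{gather*}
together with the analogous chains for $\lld_{\tau}$ and for $\uld^{\pm}_{\tau},\lld^{\pm}_{\tau}$. It then remains to observe that $\mathsf{c}$ of \eqref{eqn:hilbdensity} is continuous on $(0,\infty)$: it is visibly continuous on $(0,1)$, it vanishes identically on $[1,\infty)$, and $\mathsf{c}(t)\to0=\mathsf{c}(1)$ as $t\to1-$. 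Hence $\mathsf{c}(t\pm0)=\mathsf{c}(t)$, and the constant $0$ is trivially continuous, so feeding the values of the previous paragraph into the inequality chains forces $\uld_{\tau}(t;\Gam(\wh{\gamma}))=\lld_{\tau}(t;\Gam(\wh{\gamma}))=\mathsf{c}(t)$, $\uld^{+}_{\tau}(t;\Gam(\wh{\gamma}))=\lld^{+}_{\tau}(t;\Gam(\wh{\gamma}))=\mathsf{c}(t)$ and $\uld^{-}_{\tau}(t;\Gam(\wh{\gamma}))=\lld^{-}_{\tau}(t;\Gam(\wh{\gamma}))=0$, which are exactly the asserted identities.

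There is no serious obstacle here: the argument is an unwinding of two already-available results. The only points needing a little care are the bookkeeping with the one-sided limits $t\pm0$ appearing in Theorem~\ref{thm:invprinc}, resolved by the continuity of $\mathsf{c}$, and invoking Widom's theorem in its corrected normalisation; one should also confirm that $\tau_{\square}$ is an admissible comparison multiplier in Theorem~\ref{thm:invprinc}, which is precisely the check of Assumptions~\ref{(B)} and \ref{(C)} made above.
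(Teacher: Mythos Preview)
Your proposal is correct and follows essentially the same route as the paper: reduce to the square-truncation multiplier $\tau_{\square}$ via the Invariance Principle (Theorem~\ref{thm:invprinc}), invoke Widom's theorem for that base case, and use the continuity of $\mathsf{c}$ on $(0,\infty)$ to close up the one-sided limits $t\pm0$. The only cosmetic differences are that the paper does not spell out the checks that $\tau_{\square}$ satisfies \ref{(B)}--\ref{(C)} or that $\Gam(\wh{\gamma})\in\calB_{0}$, and for the signed densities it observes that the $N\times N$ truncation of the positive-definite Hilbert matrix is itself positive-definite (so $\sfn_{-}\equiv0$) rather than citing Widom's $\ld^{\pm}_{\square}$ formula directly.
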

\noindent As an immediate consequence of the above, we obtain
\begin{corollary}\label{cor:cor3.3}
Let $z \in \bbT$ be fixed and let $\gamma_{z}(v)=-i\gamma(\overline{z}v)$. Then the same result of Proposition \ref{prop:prop3.1} holds for the operator $\Gam(\widehat{\gamma}_{z})$.
\end{corollary}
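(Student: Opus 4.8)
The plan is to deduce the Corollary directly from Proposition~\ref{prop:prop3.1} by a unitary conjugation, so that no fresh spectral-density computation is needed. First I would record the effect of the rotation $v\mapsto\overline{z}v$ at the level of Fourier coefficients: writing $z=e^{i\alpha}$ and substituting $\theta\mapsto\theta+\alpha$ in the integral defining $\wh{\gamma_z}$, one obtains
$$\wh{\gamma_z}(n)=-i\,\overline{z}^{\,n}\,\wh{\gamma}(n),\qquad n\geq 0,$$
since $\abs{z}=1$. Hence, setting $D=\diag(\overline{z}^{\,j})_{j\geq 0}$, a unitary diagonal operator on $\ltwo$, the Hankel matrix factorises as
$$\Gam(\wh{\gamma}_z)=-i\,D\,\Gam(\wh{\gamma})\,D.$$
The entries of $\Gam(\wh{\gamma}_z)$ have the same moduli as those of $\Gam(\wh{\gamma})$, so $\Gam(\wh{\gamma}_z)\in\calB_0$; consequently, under Assumption~\ref{(C)} the operators $\tau_N\star\Gam(\wh{\gamma}_z)$ lie in $\frakS_2$ and the functionals $\uld_\tau,\lld_\tau$ (and, when self-adjointness is present, $\uld_\tau^\pm,\lld_\tau^\pm$) are well defined for $\Gam(\wh{\gamma}_z)$.

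Next I would invoke the elementary fact that Schur--Hadamard multiplication commutes with conjugation by a diagonal operator: for any bounded $B$ and any $N\geq 1$, both $\tau_N\star(DBD)$ and $D(\tau_N\star B)D$ have $(j,k)$-entry equal to $\overline{z}^{\,j+k}\,\tau(j/N,k/N)\,B_{j,k}$, so they coincide. Taking $B=\Gam(\wh{\gamma})$ gives
$$\tau_N\star\Gam(\wh{\gamma}_z)=-i\,D\,\bigl(\tau_N\star\Gam(\wh{\gamma})\bigr)\,D,\qquad N\geq 1.$$
As $-iD$ and $D$ are unitary, the two sides have identical singular values for every $N$, whence $\sfn(t;\tau_N\star\Gam(\wh{\gamma}_z))=\sfn(t;\tau_N\star\Gam(\wh{\gamma}))$ for all $t>0$. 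Dividing by $\log N$ and passing to $\limsup$ and $\liminf$ transfers $\uld_\tau(t;\Gam(\wh{\gamma}_z))=\lld_\tau(t;\Gam(\wh{\gamma}_z))=\mathsf{c}(t)$ straight from Proposition~\ref{prop:prop3.1}.

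For the statements involving $\sfn_\pm$ (which require $\tau(x,y)=\overline{\tau(y,x)}$) one must remember that these presuppose a self-adjoint operator, whereas $\Gam(\wh{\gamma}_z)$ is itself never self-adjoint; they are therefore to be applied to the self-adjoint combinations of such operators that occur later in the paper — for instance a real scalar multiple of $\Gam(\wh{\gamma})$ at $z=\pm 1$ (which is what $\kappa_{\pm1}(\omega)\Gam(\wh{\gamma}_{\pm1})$ reduces to when $\kappa_{\pm1}(\omega)$ is purely imaginary), or sums $\kappa_z(\omega)\Gam(\wh{\gamma}_z)+\kappa_{\overline z}(\omega)\Gam(\wh{\gamma}_{\overline z})$. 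For any such self-adjoint operator the very same diagonal conjugation carries the counting functions $\sfn_\pm$ over unchanged (since, with $\tau(x,y)=\overline{\tau(y,x)}$, the map $\tau_N\star(\cdot)$ preserves self-adjointness), and the second half of Proposition~\ref{prop:prop3.1} follows the same way. I do not expect a genuine obstacle here: the only thing to keep track of is the bookkeeping of the unimodular scalars and of which operator is being declared self-adjoint — which is exactly why this is recorded as an immediate consequence, the substantive work having already been done in Proposition~\ref{prop:prop3.1} (Widom's theorem together with the Invariance Principle, Theorem~\ref{thm:invprinc}).
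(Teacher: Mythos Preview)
Your argument is correct and essentially identical to the paper's: compute $\wh{\gamma}_z(j)=-i\,\overline{z}^{\,j}\wh{\gamma}(j)$, write $\tau_N\star\Gam(\wh{\gamma}_z)=-i\,U_{\overline{z}}(\tau_N\star\Gam(\wh{\gamma}))U_{\overline{z}}$ with $U_{\overline{z}}=\diag(\overline{z}^{\,j})$, and conclude equality of singular values. Your additional remarks (the explicit check that Schur--Hadamard multiplication commutes with diagonal conjugation, and the caveat that the $\sfn_\pm$ statements only make sense for the self-adjoint combinations used later) are accurate refinements the paper leaves implicit.
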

\begin{proof}[Proof of Proposition] The Invariance Principle, Theorem \ref{thm:invprinc}, shows that it is sufficient for the statement to hold for $\tau_{\square}(x, y)=\1_{\square}(x,y)$ defined in \eqref{eqn:square}. This has already been done in \cite[Theorem 5.1]{Widom} and it has already been discussed in the Introduction in \eqref{eqn:hilbdensity}. Since the Hilbert matrix is a positive-definite operator, it is easy to see that $\tau_{N}\star \Gam(\wh{\gamma})$ is positive-definite and so 
\begin{align*}
\uld_{\tau}(t;\Gam(\wh{\gamma}))=\uld^{+}_{\tau}(t;\Gam(\wh{\gamma})), \quad \uld^{-}_{\tau}(t;\Gam(\wh{\gamma}))=0.
\end{align*}
The statement can be independently proved using the function $\tau_{1}(x,y)=e^{-(x+y)}$ discussed in the Introduction, however we postpone this to the Appendix.
\end{proof}
\begin{proof}[Proof of Corollary \ref{cor:cor3.3}]
Indeed, note that $\widehat{\gamma}_{z}(j)=-i\,\overline{z}^{j}\,\widehat{\gamma}(j),\, j\geq 0$. Hence, for any function $\tau$ one has: $$\tau_{N}\star\Gam(\widehat{\gamma}_{z})=-iU_{\overline{z}}(\tau_{N}\star\Gam(\widehat{\gamma}))U_{\overline{z}}$$ where $U_{\overline{z}}$ is the unitary operator of multiplication by  $\overline{z}^{j},\, j\geq 0.$ From this, we immediately see that $$s_{n}(\tau_{N}\star\Gam(\widehat{\psi}_{z}))=s_{n}(\tau_{N}\star\Gam(\widehat{\gamma})),\quad \forall\,n\geq 1$$ and so the statement follows immediately from Proposition \ref{prop:prop3.1}.
\end{proof}
\end{subsection}
\end{section}
\begin{section}{Proof of Theorem \ref{thm:sd}}
The proof of the result will be broken down in two Steps. For brevity, we denote by $\Gam^{(N)}(\wh{\omega})$ the operator $\tau_{N}\star\Gam(\wh{\omega})$. We also recall that $\Omega$ is the set of jump-discontinuities of the symbol $\omega$ and $\sfc$ is the function in \eqref{eqn:hilbdensity}.

\noindent \textit{Step 1. Finitely many jumps.} Suppose that $\Omega$ is finite. Setting $\gamma_{z}(v)=-i\gamma(\overline{z}v)$, with $\gamma$ being the symbol defined in \eqref{eqn:hilbsymb}, write
\begin{equation}\label{eqn:1}
\omega(v)=\sum_{z \in \Omega}{\kappa_{z}(\omega)\gamma_{z}(v)}+\eta(v)
\end{equation}
 where $\eta$ is continuous on $\bbT$ and let $\Phi$ denote the symbol $$\Phi(v)=\sum_{z \in \Omega}{\kappa_{z}(\omega)\gamma_{z}(v)}.$$ Weyl's inequality \eqref{eqn:weyl} shows that for $0<s<t$ one has
\begin{gather*}
\sfn(t+s;\Gam^{(N)}(\wh{\Phi}))-\sfn(s;\Gam^{(N)}(\wh{\eta}))\leq \sfn(t;\Gam^{(N)}(\wh{\omega}))\leq \sfn(t-s;\Gam^{(N)}(\wh{\Phi}))+\sfn(s;\Gam^{(N)}(\wh{\eta})).
\end{gather*}
Since $\Gam(\wh{\eta})$ is compact, Lemma \ref{lemma:compactperturbations} shows that $\sfn(s;\Gam^{(N)}(\wh{\eta}))=O_{s}(1)$ as $N\to \infty$ and so, using the definition of the functionals $\lld_{\tau}, \uld_{\tau}$ we deduce that for any $t>0$
\begin{gather}\label{eqn2}
\uld_{\tau}(t; \Gam(\wh{\omega}))\leq\uld_{\tau}(t-0; \Gam(\wh{\Phi})), \\\label{eqn3}
\lld_{\tau}(t; \Gam(\wh{\omega}))\geq\lld_{\tau}(t+0; \Gam(\wh{\Phi})). 
\end{gather} 
Integration by parts shows that 
\begin{align}\notag
\wh{\Phi}(j)&=\sum_{z \in \Omega}{\kappa_{z}(\omega)\wh{\gamma}_{z}(j)}\\ \label{eqn:fouriercoeff}
&=\frac{-i}{\pi (j+1)}\sum_{z \in \Omega}{\kappa_{z}(\omega)z^{j}}=O\left(\frac{1}{j+1}\right),\ \ \ j\to \infty
\end{align}
and so by the Invariance Principle \ref{thm:invprinc} applied to the operator $\Gam(\wh{\Phi})$, we obtain
\begin{gather}\label{eqn4}
\uld_{\tau}(t; \Gam(\wh{\Phi}))\leq\uld_{\tau_{1}}(t-0; \Gam(\wh{\Phi})), \\\label{eqn5}
\lld_{\tau}(t; \Gam(\wh{\Phi}))\geq\lld_{\tau_{1}}(t+0; \Gam(\wh{\Phi})). 
\end{gather} 
where $\tau_{1}(x,y)=e^{-(x+y)}$ induces the regularisation in \eqref{eqn:poistrun}. The Fourier transform $\calF$ on $\Ltwo$, defined as $$(\calF f)(j)=\int_{\bbT}f(z)\overline{z}^{j}d\m(z),\quad f \in \Ltwo,\, j\geq 0,$$ implies that modulo kernels, see \eqref{eqn:poissontruncation}, we have
\begin{gather*}
\Gam^{(N)}(\wh{\Phi})=\sum_{z\in \Omega}\kappa_{z}(\omega)\Gam^{(N)}(\wh{\gamma}_{z})=\sum_{z \in \Omega}{\kappa_{z}(\omega)\calF H((\gamma_{z})_{N})\calF^{\ast}},
\end{gather*}
where $(\gamma_{z})_{N}=P_{r}\ast \gamma_{z}$, with $P_{r}$ being the Poisson kernel with $r=e^{-1/N}$. By Lemma \ref{lemma:lemma4.3}-\ref{item:item4.3.1} and unitary equivalence, we have that whenever $z\neq w$
\begin{gather*}
\sup_{N\geq 1}\|\Gam^{(N)}(\wh{\gamma}_{z})^{\ast}\Gam^{(N)}(\wh{\gamma}_{w})\|_{\frakS_{1}}=\sup_{N\geq 1}\|H((\gamma_{z})_{N})^{\ast}H(\gamma_{w})_{N}\|_{\frakS_{1}}<\infty.
\end{gather*}
Using Theorem \ref{thm:thm2.2}, it then follows that for $t>0$
\begin{gather}\label{eqn6}
\uld_{\tau_{1}}(t; \Gam(\wh{\Phi}))\leq\sum_{z \in \Omega}\uld_{\tau_{1}}(t-0; \kappa_{z}(\omega)\Gam(\wh{\gamma}_{z})), \\\label{eqn7}
\lld_{\tau_{1}}(t; \Gam(\wh{\Phi}))\geq\sum_{z\in \Omega}\lld_{\tau_{1}}(t+0; \kappa_{z}(\omega)\Gam(\wh{\gamma}_{z})). 
\end{gather} 
Finally, Corollary \ref{cor:cor3.3} together with \eqref{eqn2}, \eqref{eqn3}, \eqref{eqn4}, \eqref{eqn5}, \eqref{eqn6} and \eqref{eqn7} and the continuity of $\sfc$ at $t\neq 0$ gives that
\begin{gather*}
\uld_{\tau}(t;\Gam(\wh{\omega}))\leq \sum_{z \in \Omega}\mathsf{c}\left(\frac{t}{\abs{\kappa_{z}(\omega)}}\right),\cr
\uld_{\tau}(t;\Gam(\wh{\omega}))\geq \sum_{z\in \Omega}\mathsf{c}\left(\frac{t}{\abs{\kappa_{z}(\omega)}}\right).
\end{gather*}
The obvious inequality $\lld_{\tau}(t;H(\omega))\leq \uld_{\tau}(t;H(\omega))$ proves the assertion.
\begin{remark}
We note that \eqref{eqn6} and \eqref{eqn7} hold if we consider any symbol $\omega$ which is smooth except for a finite set of jumps discontinuities. These two together are yet another instance of the Localisation principle we referred to in the Introduction.
\end{remark}
\noindent\textit{Step 2. From finitely many to infinitely jumps.} Suppose now that $\Omega$ is infinite. Define the sets:
\begin{gather*}
\Omega_{0}=\{z \in \bbT\ |\ \abs{\kappa_{z}(\omega)}\geq 2^{-1} \},\\
\Omega_{n}=\{z \in \bbT\ |\ 2^{-n-1}\leq\abs{\kappa_{z}(\omega)}< 2^{-n} \},\ \ \ n\geq 1.
\end{gather*}
As we mentioned earlier, these are finite. Let $\phi_{n}$ be functions such that $\singsupp\phi_{n}=\Omega_{n}$, $\kappa_{z}(\phi_{n})=\kappa_{z}(\omega)$ for any $z \in \Omega_{n}$ and such that $$\|\phi_{n}\|_{\infty}=\max_{z \in \Omega_{n}}\abs{\kappa_{z}(\omega)}.$$
Let $\Phi=\sum_{n\geq 0}{\phi_{n}} \in L^{\infty}(\bbT)$. Since $\omega - \Phi\in C(\bbT)$, the operator $\Gam(\wh{\omega})-\Gam(\wh{\Phi}) \in \Sinfty$ and so, by Lemma \ref{lemma:compactperturbations} once again we obtain
\begin{gather*}
\uld_{\tau}(t; \Gam(\wh{\omega}))\leq \uld_{\tau}(t-0; \Gam(\wh{\Phi})),\\
\uld_{\tau}(t; \Gam(\wh{\omega}))\geq \uld_{\tau}(t+0; \Gam(\wh{\Phi})).
\end{gather*}
For a fixed $s>0$, let $M$ be so that $\|\Phi-\Phi_{M}\|_{\infty}< s$, where $\Phi_{M}=\sum_{n=0}^{M}{\phi_{n}}$. The uniform boundedness of $\tau$ then gives $$\|\tau_{N}\star(\Gam(\wh{\Phi})-\Gam(\wh{\Phi}_{M}))\|\leq \left(\sup_{N\geq 1}\|\tau_{N}\|_{\frakM}\right)\|\Phi-\Phi_{M}\|_{\infty}< \left(\sup_{N\geq 1}\|\tau_{N}\|_{\frakM}\right) s:=s'.$$ Letting $\widetilde{\Omega}_{M}=\cup_{n=0}^{M}\Omega_{n}$, we then obtain that:
\begin{gather*}
\uld_{\tau}(t; H(\omega))\leq \uld_{\tau}(t-s'; H(\Phi_{M}))=\sum_{z\in \widetilde{\Omega}_{M}}\sfc\left(\frac{t-s'}{\abs{\kappa_{z}(\omega)}}\right),\\
\uld_{\tau}(t; H(\omega))\geq \uld_{\tau}(t+s'; H(\Phi_{M}))=\sum_{z\in \widetilde{\Omega}_{M}}\sfc\left(\frac{t+s'}{\abs{\kappa_{z}(\omega)}}\right).
\end{gather*}
The equalities above follow from the \textit{Step 1.}, since $\Phi_{M}$ has finitely many jumps. Finally, sending $s \to 0$ and noting that there are only finitely many $z \in \Omega$ such that $t\leq \abs{\kappa_{z}(\omega)}$, one obtains 
\begin{equation*}
\lld_{\tau}(t; H(\omega))=\uld_{\tau}(t; H(\omega)).
\end{equation*}
\end{section}
\begin{section}{Proof of Theorem \ref{thm:sdsa}}
Just as in the proof of Theorem \ref{thm:sd}, we break the argument into two steps, and use the same notation as before for the operator $\tau_{N}\star\Gam(\wh{\omega})$ and for the symbols $\gamma_{z}$. We also set $\Omega^{+}=\{z \in \Omega\ \vert\ \im z>0\}$.

\noindent\textit{Step 1. Finitely many jumps.} Just as before, suppose that the symbol $\omega$ has finitely-many jump-discontinuities. Write
\begin{equation}\label{eqn5.1}
\omega(v)=\left(\kappa_{1}(\omega)\gamma_{1}(v)+\kappa_{-1}(\omega)\gamma_{-1}(v)+\sum_{z \in \Omega^{+}}{\kappa_{z}(\omega)\gamma_{z}(v)+\overline{\kappa_{z}(\omega)}\gamma_{\overline{z}}}(v)\right)+\eta(v),
\end{equation}
where $\eta$ is continuous on $\bbT$. If $\omega$ has no jump at $\pm 1$, the corresponding quantities do not appear in the above. Denoting by $\Phi$ the sum in the brackets, Weyl inequality \eqref{eqn:weylsa} gives for $0<s<t$
\begin{gather*}
\sfn_{\pm}(t+s;\Gam^{(N)}(\wh{\Phi}))-\sfn(s;\Gam^{(N)}(\wh{\eta}))\leq \sfn_{\pm}(t;\Gam^{(N)}(\wh{\omega}))\leq \sfn_{\pm}(t-s;\Gam^{(N)}(\wh{\Phi}))+\sfn_{\pm}(s;\Gam^{(N)}(\wh{\eta})).
\end{gather*}
By Lemma \ref{lemma:compactperturbations}, we obtain that $\sfn_{\pm}(s;\Gam^{(N)}(\wh{\eta}))=O_{s}(1)$, and so, for any $t>0$, it follows that
\begin{gather*}
\uld_{\tau}^{\pm}(t; \Gam(\wh{\omega}))\leq\uld_{\tau}^{\pm}(t-0; \Gam(\wh{\Phi})), \\
\uld_{\tau}^{\pm}(t; \Gam(\wh{\omega}))\geq\uld_{\tau}^{\pm}(t+0; \Gam(\wh{\Phi})). 
\end{gather*} 
Integration by parts once again shows that 
\begin{align*}
\wh{\Phi}(j)=O\left(\frac{1}{j+1}\right),\quad j\to \infty.
\end{align*}
Applying Theorem \ref{thm:invprinc} to $\Gam(\wh{\Phi})$, we see that it is sufficient to prove the result for the multiplier $\tau(x,y)=e^{-(x+y)}.$ Since the symbols $$\kappa_{1}(\omega)\gamma_{1},\ \kappa_{-1}(\omega)\gamma_{-1},\ \kappa_{z}(\omega)\gamma_{z}+\overline{\kappa_{z}(\omega)}\gamma_{\overline{z}}$$ have mutually disjoint singular supports for $z \in \Omega^{+}$, Lemma \ref{lemma:lemma4.3}-\ref{item:item4.3.2} and Theorem \ref{thm:thm2.2} imply that for $t>0$
\begin{align}\notag
\uld^{\pm}_{\tau}(t; \Gam(\wh{\Phi}))&\leq\uld^{\pm}_{\tau}(t-0; \kappa_{1}(\omega)\Gam(\wh{\gamma_{1}}))+
\uld^{\pm}_{\tau}(t-0; \kappa_{1}(\omega)\Gam(\wh{\gamma_{1}}))\\ \label{eqn:upperdensity}
&+\sum_{z \in \Omega^{+}}\uld^{\pm}_{\tau}(t-0; \kappa_{z}(\omega)\Gam(\wh{\gamma}_{z})+\overline{\kappa_{z}(\omega)}\Gam(\wh{\gamma}_{\overline{z}})) \\\notag
\uld^{\pm}_{\tau}(t; \Gam(\wh{\Phi}))&\geq\uld^{\pm}_{\tau}(t+0; \kappa_{1}(\omega)\Gam(\wh{\gamma_{1}}))+
\uld^{\pm}_{\tau}(t+0; \kappa_{1}(\omega)\Gam(\wh{\gamma_{1}}))\\\label{eqn:lowerdensity}
&+\sum_{z \in \Omega^{+}}\uld^{\pm}_{\tau}(t+0; \kappa_{z}(\omega)\Gam(\wh{\gamma}_{z})+\overline{\kappa_{z}(\omega)}\Gam(\wh{\gamma}_{\overline{z}})) .
\end{align}
The operators $\kappa_{\pm 1}(\omega)\Gam(\wh{\gamma}_{\pm 1})$ are sign definite, and furthermore one has that 
\begin{gather*}
\kappa_{\pm 1}(\omega)\Gam(\wh{\gamma}_{\pm 1})\geq 0\ (\text{resp.} \leq 0)\  \text{if} \ -i \kappa_{\pm 1}(\omega)\geq 0\ (\text{resp.} \leq 0).
\end{gather*}
In either case, Proposition \ref{prop:prop3.1} gives that
\begin{align}\notag
\uld^{\pm}_{\tau}(t;\kappa_{\pm 1}(\omega)\Gam(\wh{\gamma}_{\pm 1}))&=\1_{\pm}(-i \kappa_{\pm 1}(\omega))\uld_{\tau}(t;\kappa_{\pm 1}(\omega)\Gam(\wh{\gamma}_{\pm 1}))\\\label{eqn:density1}
&=\1_{\pm}(-i \kappa_{\pm 1}(\omega))\mathsf{c}\left(t\abs{\kappa_{\pm 1}(\omega)}^{-1}\right)
\end{align}
where $\1_{\pm}$ is the indicator function of $\bbR_{\pm}=(0, \pm \infty)$.

\noindent From Lemma \ref{lemma:lemma4.3}-\ref{item:item4.3.2}, Theorem \ref{thm:thm2.3} and Theorem \ref{thm:sd} above, we get that for any $z \in \Omega^{+}$
\begin{align}\notag
\uld^{\pm}_{\tau}\left(t; \kappa_{z}(\omega)\Gam(\wh{\gamma}_{z})+\overline{\kappa_{z}(\omega)}\Gam(\wh{\gamma}_{\overline{z}})\right)&=\frac{1}{2}\uld_{\tau}\left(t; \kappa_{z}(\omega)\Gam(\wh{\gamma}_{z})+\overline{\kappa_{z}(\omega)}\Gam(\wh{\gamma}_{\overline{z}})\right)\\\label{eqn:densityzeta}
&=\mathsf{c}\left(t\abs{\kappa_{z}(\omega)}^{-1}\right).
\end{align}
Using \eqref{eqn:density1} and \eqref{eqn:densityzeta} in \eqref{eqn:upperdensity} and \eqref{eqn:lowerdensity}, the continuity of $\mathsf{c}$ at $t\neq 0$ gives that 
\begin{gather*}
    \lld^{\pm}_{\tau}(t; \Gam(\wh{\omega}))=\uld^{\pm}_{\tau}(t; \Gam(\wh{\Phi}))
\end{gather*}
and so we arrive at \eqref{eqn:lsdsa}.

\begin{remark}
As we wrote earlier in the Introduction, if the symbol has a pair of complex conjugate jumps, then \eqref{eqn:densityzeta} shows that the upper and lower logarithmic spectral density of $\Gam(\wh{\omega})$ contribute equally to the logarithmic spectral density of $\abs{\Gam(\wh{\omega})}$. This is  an effect of the Symmetry Principle we referred to in the Introduction.
\end{remark}

\noindent\textit{Step 2. From finitely many to infinitely many jumps.} For fixed $s > 0$, define the set $$\Omega_{s}^{+}=\{z \in \Omega\ \vert\ \abs{\kappa_{z}(\omega)}>s \text{ and } \Im z> 0\}.$$ Just as in \textit{Step 2.} in the proof of Theorem \ref{thm:sd}, we can find a symbol $\omega_{s} \in PC(\bbT)$ so that $\|\omega-\omega_{s}\|_{\infty}< s$, the set of its discontinuities is precisely $\Omega_{s}^{+}\cup\{\pm 1\}$ and $$\kappa_{z}(\omega)=\kappa_{z}(\omega_{s}),\quad \forall z \in \Omega_{s}^{+}\cup\{\pm 1\}.$$ The set $\Omega_{s}^{+}\cup\{\pm 1\}$ is finite, thus from Weyl inequality \eqref{eqn:weylsa} and \textit{Step 1.} we obtain
\begin{align*}
\uld^{\pm}_{\tau}(t; \Gam(\wh{\omega}))&\leq \uld^{\pm}_{\tau}(t-s'; \Gam(\wh{\omega}_{s}))\\
\uld^{\pm}_{\tau}(t; \Gam(\wh{\omega}))&\geq \uld^{\pm}_{\tau}(t+s'; \Gam(\wh{\omega}_{s})),
\end{align*}
where $s'=\left(\sup_{N\geq 1}\|\tau_{N}\|_\frakM\right) s.$ Finally, sending $s \to 0$ and using the continuity of $\mathsf{c}$ at $t\neq 0$ establishes the result in its generality. \qed

\begin{proof}[Proof of Proposition \ref{prop:schattenld}]
The same reasoning of \textit{Step 1.} in both proofs above applies in this case, with only one minor change. Since we assume that $\tau$ induces a uniformly bounded multiplier on $\Sp,\, p>1$, i.e. that \eqref{eqn:spuniform} holds, in \eqref{eqn:1} and \eqref{eqn5.1} we need to assume that $\eta$ is a symbol so that $\Gam(\wh{\eta}) \in \Sp$. Then Lemma \ref{lemma:compactperturbations} shows that $\sfn(s;\Gam^{(N)}(\wh{\eta}))=O_{s}(1)$ and, in the self-adjoint case $\sfn_{\pm}(s;\Gam^{(N)}(\wh{\eta}))=O_{s}(1)$. The rest follows immediately.
\end{proof}

\begin{proof}[Proof of Proposition \ref{prop:hankelld}]
Exactly the same reasoning of the proofs of Theorems \ref{thm:sd} and \ref{thm:sdsa} above applies in this case, with the only difference being that in this case $\tau$ is no longer inducing a uniformly bounded multiplier on the whole space of bounded operators, just on Hankel matrices. However, all of the terms appearing in the arguments just presented are bounded Hankel operators and so the same arguments apply in this case.
\end{proof}
\end{section}

\begin{appendix}
\section{An independent proof of Proposition \ref{prop:prop3.1}}
By virtue of Theorem \ref{thm:invprinc}, choose the function $\tau_{1}(x,y)=e^{-(x+y)}$, which yields $$((\tau_{1})_{N}\star\Gam(\widehat{\omega}))_{j,k}=e^{-\frac{j+k}{N}}\widehat{\omega}(j+k)=\Gam^{(r)}(\widehat{\omega})_{j,k},\ \ \ r=e^{-1/N},$$ where $\Gam^{(r)}(\wh{\omega})$ is the Poisson truncation in \eqref{eqn:poissontruncation}. We start our proof with the following Lemma, similar to \cite[Lemma 4.1]{F-P-Spectral}:
\begin{lemma}\label{lemma:lemma3.2}
 For any $m \in \bbN$ one has that: $$\Tr \Gam^{(r)}(\widehat{\gamma})^{m}=\frac{\abs{\log(1-r)}}{2\pi}\int_{\bbR}{\left(\frac{1}{r\cosh(\pi\eta)}\right)^{m}d\eta}+o(\abs{\log(1-r)}),\ \ \ r\to 1^{-}.$$
 \end{lemma}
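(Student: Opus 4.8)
The plan is to realise $\Gam^{(r)}(\wh{\gamma})$ as an explicit positive integral operator, rewrite $\Tr\Gam^{(r)}(\wh{\gamma})^{m}$ as a ``necklace'' integral, localise it near its singularity and reduce it to $\Tr G_{L}^{m}$ for a single model operator $G_{L}$ on $L^{2}(0,L)$, and finally extract the leading term of $\Tr G_{L}^{m}$ by a Mellin‑type change of variables turning $G_{L}$ into a truncated convolution operator. For the first part, using $\frac{1}{j+k+1}=\int_{0}^{1}x^{j+k}\,dx$ one has $\Gam^{(r)}(\wh{\gamma})=\pi^{-1}W_{r}^{\ast}W_{r}$ with $W_{r}\colon\ltwo\to\LTwo$, $(W_{r}e_{j})(x)=(rx)^{j}$; since $\Gam^{(r)}(\wh{\gamma})\in\frakS_{0}$ the operator $T_{r}:=W_{r}W_{r}^{\ast}$ (the integral operator on $\LTwo$ with kernel $K_{r}(x,y)=(1-r^{2}xy)^{-1}$) and its powers are trace class, and
\begin{equation*}
\Tr\Gam^{(r)}(\wh{\gamma})^{m}=\pi^{-m}\Tr T_{r}^{m}=\pi^{-m}\int_{[0,1]^{m}}\prod_{a=1}^{m}\frac{dx_{a}}{1-r^{2}x_{a}x_{a+1}},\qquad x_{m+1}:=x_{1}.
\end{equation*}

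For the localisation, write $\beta=1-r$ and $p=1-x$; an elementary estimate gives the two–sided bound $1-r^{2}xy\asymp\beta+p+q$ uniformly on $(0,1)^{2}$, together with the sharper $1-r^{2}xy=(2\beta+p+q)\bigl(1+O(\delta+\beta)\bigr)$ uniformly for $p,q\in(0,\delta)$. Splitting $T_{r}=A_{\delta}+B_{\delta}$, where $A_{\delta}$ is the compression of $T_{r}$ to $\{x>1-\delta\}$ and $B_{\delta}$ carries the complementary kernel (supported on a set of measure $\le1$ on which $K_{r}\le1/\delta$, hence $\|B_{\delta}\|_{\frakS_{2}}\le1/\delta$ uniformly in $r$), I would expand $\Tr T_{r}^{m}=\Tr A_{\delta}^{m}+(\text{terms containing }B_{\delta})$ and bound the mixed terms by H\"older's inequality for Schatten norms together with the crude bound $\Tr A_{\delta}^{2k}=O(\abs{\log\beta})$, obtaining $\Tr T_{r}^{m}=\Tr A_{\delta}^{m}+o(\abs{\log(1-r)})$. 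On $\{x>1-\delta\}$ the sharp linearisation and positivity of the kernels yield $(1-C\delta)^{m}\Tr\widetilde{A}_{\delta}^{m}\le\Tr A_{\delta}^{m}\le(1+C\delta)^{m}\Tr\widetilde{A}_{\delta}^{m}$, where $\widetilde{A}_{\delta}$ is the integral operator on $L^{2}(0,\delta)$ with kernel $(2\beta+p+q)^{-1}$; rescaling $p=2\beta t$ identifies $\Tr\widetilde{A}_{\delta}^{m}$ with $\Tr G_{L}^{m}$, $L=\delta/(2\beta)$, where $G_{L}$ is the integral operator on $L^{2}(0,L)$ with kernel $(1+x+y)^{-1}$.

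For the last part, substituting $v=1+x$ and then $v=e^{u}$ (carrying the unitary weight $e^{u/2}$) identifies $G_{L}$ with the truncated convolution operator on $L^{2}(0,\Lambda)$, $\Lambda=\log(1+L)$, with convolution kernel $c(t)=\bigl(2\cosh(t/2)\bigr)^{-1}$, whose Fourier transform is $\wh{c}(\xi)=\pi\bigl(\cosh\pi\xi\bigr)^{-1}$. The classical one–dimensional Szeg\H{o}/Kac asymptotics for traces of powers of truncated Wiener--Hopf operators (obtained by comparison with convolution on the whole line, the remainder being trace class and localised near the two endpoints) then gives
\begin{equation*}
\Tr G_{L}^{m}=\frac{\Lambda}{2\pi}\int_{\bbR}\wh{c}(\xi)^{m}\,d\xi+o(\Lambda)=\frac{\pi^{m-1}}{2}\Bigl(\int_{\bbR}\frac{d\xi}{\cosh^{m}\pi\xi}\Bigr)\Lambda+o(\Lambda).
\end{equation*}
Since $\Lambda=\abs{\log(1-r)}+O_{\delta}(1)$, combining the above and letting $\delta\to0$ yields $\Tr\Gam^{(r)}(\wh{\gamma})^{m}=\frac{\abs{\log(1-r)}}{2\pi}\int_{\bbR}(\cosh\pi\eta)^{-m}\,d\eta+o(\abs{\log(1-r)})$; finally, $r^{-m}\to1$ allows replacing $(\cosh\pi\eta)^{-m}$ by $(r\cosh\pi\eta)^{-m}$ at the cost of an $o(\abs{\log(1-r)})$ term, which is the assertion.

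The main obstacle is the last step: seeing that the logarithmic growth rate of $\Tr G_{L}^{m}$ is $\frac{1}{2\pi}\int\wh{c}^{\,m}$ with $\wh{c}=\pi\sech(\pi\cdot)$ — this is precisely where the $\sech$ of the statement originates, and it rests on the Mellin reduction to a half‑line convolution together with the Szeg\H{o}‑type limit. The localisation is more routine, though it does require care with the uniform‑in‑$r$ Schatten‑norm estimates and with the auxiliary ``path'' bound $\int_{[0,1]^{k}}\prod_{a=1}^{k-1}(\beta+p_{a}+p_{a+1})^{-1}\,dp=O(1)$ used to discard the region away from $(1,\dots,1)$.
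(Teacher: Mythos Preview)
Your argument is sound and arrives at the right constant, but it takes a noticeably different route from the paper's. The paper's key simplification is to absorb the $r$-dependence into a projection rather than into the kernel: from $\Gam^{(r)}(\wh{\gamma})=r^{-1}(L\mathbb{1}_{r})(L\mathbb{1}_{r})^{*}$ with $L$ independent of $r$ one gets $r^{m}\Tr\Gam^{(r)}(\wh{\gamma})^{m}=\Tr(\mathbb{1}_{r}L^{*}L\mathbb{1}_{r})^{m}$, and $L^{*}L$ has the clean kernel $\pi^{-1}(1-ts)^{-1}$. A single \emph{exact} substitution $t=\tanh x$ then converts $L^{*}L$ into the convolution operator on $L^{2}(\mathbb{R}_{+})$ with kernel $\pi^{-1}\sech(x-y)$, so the $\sech$ appears identically rather than after a linearisation; the Szeg\H{o} step is then a one-line application of the Laptev--Safarov inequality $|\Tr\phi(PBP)-\Tr P\phi(B)P|\le\|\phi''\|_{\infty}\|PB(I-P)\|_{\mathfrak{S}_{2}}^{2}$, with the right-hand side shown to be $O(1)$ uniformly in $r$. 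This avoids entirely the localisation near $(1,\dots,1)$, the auxiliary parameter $\delta$, the H\"older bookkeeping for the cross terms, and the final double limit $\delta\to0$ after $r\to1$ that your approach requires; it also gives a sharper $O(1)$ remainder. Your route is more hands-on and perfectly workable, but more laborious. One small slip: the shift $v=1+x$ turns the kernel $(1+x+y)^{-1}$ into $(v+w-1)^{-1}$, which is not a Mellin kernel; the correct shift is $x\mapsto x+\tfrac12$, giving $(x+y)^{-1}$ on $(\tfrac12,L+\tfrac12)$, after which your substitution $x=e^{u}$ indeed produces the convolution kernel $(2\cosh(t/2))^{-1}$ on an interval of length $\log(2L+1)=\abs{\log(1-r)}+O_{\delta}(1)$.
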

 
 \begin{proof}[Proof of Lemma] 
Let us define the operator $L:\LTwo\to \ltwo$ as follows
\begin{align*}
(L f)(j)&=\frac{1}{\sqrt{\pi}}\int_{0}^{1}{f(s)s^{j}ds},\ \ \ j\geq 0.
\end{align*}
 Its boundedness can be established using the Schur test. A simple calculation yields the identity $\Gam(\widehat{\gamma})=L L^{\ast},$ from which if follows that, with $\Gam^{(r)}(\widehat{\gamma})=\Gam(\widehat{\gamma}_{r})$ 
\begin{align*}
\Gam^{(r)}(\widehat{\gamma})&=\frac{1}{r}L\1_{r}L^{\ast}\\
								&=\frac{1}{r}(L\1_{r})(L\1_{r})^{\ast}
\end{align*}
where $\1_{r}$ is the characteristic function of the interval $(0,r)$ and so one obtains \begin{gather}\label{eqn:A1}
r^{m}\Tr\Gam^{(r)}(\wh{\gamma})^{m}=\Tr\left(\1_{r}L^{\ast}L\1_{r}\right)^{m},
\end{gather}
therefore we only need to compute the latter trace. Recall now that for any bounded operator $X$, there is a unitary equivalence between $XX^{\ast}|_{\ker(XX^{\ast})^{\perp}}$ and $X^{\ast}X|_{\ker(X^{\ast}X)^{\perp}}$. Hence, the trace of $\left(\1_{r}L^{\ast}L\1_{r}\right)^{m}$ and that of $\left(\1_{r}L^{\ast}L\1_{r}\right)^{m}$ coincide. Note however that the operator $L^{\ast}L$ is an operator acting on $\LTwo$ whose integral kernel is: $$k(t,s)=\frac{1}{\pi(1-ts)},\ \ \ t,\,s \in (0,1).$$ Following the procedure described in \cite{Widom}, define the unitary transformation: 
\begin{align*}
U&:\LTwo\to L^{2}(\Rplus)\\
(U f)(x)&=\frac{1}{\cosh(x)}f(\tanh(x)),\ \ \ x>0.
\end{align*}
Then we have $B=U L^{\ast}LU^{\ast}:L^{2}(\Rplus)\to L^{2}(\Rplus)$ is the convolution operator
\begin{align*}
(Bf)(x)&=\int_{\Rplus}{\frac{f(y)}{\pi\cosh(x-y)}dy},\ \ \ x>0.
\end{align*}
In this way, we have reduced our problem to evaluating the trace of the integral operator $(\widetilde{\1}_{r}B\widetilde{\1}_{r})^m$ , where $\widetilde{\1}_{r}$ is the characteristic function of the interval $(0, \arctanh(r))$. By adding $0$ to its spectrum, we also consider $\widetilde{\1}_{r}B\widetilde{\1}_{r}$ as an integral operator acting on $L^{2}(\bbR)$, with integral kernel $$\frac{\widetilde{\1}_{r}(s)\widetilde{\1}_{r}(t)}{\pi\cosh(s-t)},\ \ \ s,t \in \bbR.$$ We now use the following result:
\begin{theorem}[\cite{L-S}]\label{thm:LS}
Let $P$ be an orthogonal projection and $B$ be a bounded operator such that $PB \in \frakS_{2}$. Let $\phi$ be such that $\phi(0)=0$ and $\phi'' \in L^{\infty}(\spec{B})$, then:
\begin{align}\label{eqn:LS}
\abs{\Tr\phi(PBP)-\Tr P\phi(B)P}\leq \|\phi''\|_{L^{\infty}(\spec{B})}\|PB(I-P)\|_{\frakS_{2}}^{2}.
\end{align}
\end{theorem}
Note that $\1_{r}L^{\ast}L \in \frakS_{2}$ for any $r<1$ so by unitary equivalence $\widetilde{\1}_{r}B \in \frakS_{2}$. Furthermore, the operator $B$ is unitarily equivalent, under the Fourier Transform, to the operator of multiplication on $L^{2}(\bbR)$ by the function $$\frac{1}{\cosh(\pi\xi/2)},\ \ \ \xi \in \bbR.$$
Whence we can estimate $\Tr (\widetilde{\1}_{r}B\widetilde{\1}_{r})^m$ by:
\begin{align*}
\Tr\widetilde{\1}_{r}B^m\widetilde{\1}_{r}&=\frac{1}{2\pi}\int_{\bbR}{\widetilde{\1}_{r}(x)dx}\int_{\bbR}{\left(\frac{1}{\cosh(\pi\xi/2)}\right)^{m}d\xi}\\
											  &=\frac{\arctanh(r)}{\pi}\int_{\bbR}{\left(\frac{1}{\cosh(\pi\xi)}\right)^{m} d\xi}\\
											  &=\frac{\abs{\log(1-r)}}{2\pi}\int_{\bbR}{\left(\frac{1}{\cosh(\pi\xi)}\right)^{m}d\xi}+o(\abs{\log(1-r)}),\ \ \ r \to 1^{-}.
\end{align*}
We also have that:
\begin{align*}
\|\widetilde{\1}_{r}B(1-\widetilde{\1}_{r})\|_{\frakS_{2}}^{2}&=\|(\widetilde{\1}_{r}B-B\widetilde{\1}_{r})(1-\widetilde{\1}_{r})\|_{\frakS_{2}}^{2}\leq \|[\widetilde{\1}_{r},B]\|_{\frakS_{2}}^{2}, 
\end{align*}
thus we need to find an estimate for the Hilbert-Schmidt norm of the integral operator $[\widetilde{\1}_{r},B]$, which has integral kernel given by: 
\begin{equation*}
k(t,s)=\frac{\widetilde{\1}_{r}(t)-\widetilde{\1}_{r}(s)}{\pi\cosh(\pi(t-s))},\ \ \ t, s\in \bbR.
\end{equation*}
It follows that
\begin{align*}
\|[\widetilde{\1}_{r},B]\|_{\frakS_{2}}^{2}=\iint_{\bbR^{2}}{k^{2}(t,s)dtds}=\int_{\bbR}{\frac{\phi(z)}{\pi^{2}\cosh^{2}(\pi z)}dz},
\end{align*}
with
\begin{align*}
\phi(z)&=\int_{\bbR}{(\widetilde{\1}_{r}(z+y)-\widetilde{\1}_{r}(y))^{2}dy}\\
	   &= 2 \min\left\{\abs{z}, \arctanh{r}\right\}\leq 2\abs{z}.
\end{align*}
Whereby obtaining that
\begin{align*}
\|[\widetilde{\1}_{r},B]\|_{\frakS_{2}}^{2}\leq C\int_{\bbR}{\frac{\abs{z}}{\cosh^{2}(z)}dz}<\infty.
\end{align*}
Using \eqref{eqn:A1} and \eqref{eqn:LS}, we obtain: $$\Tr (\Gam^{(r)}(\widehat{\gamma}))^{m}=\frac{\abs{\log(1-r)}}{2\pi r^{m}}\int_{\bbR}{\left(\frac{1}{r\cosh(\pi \xi)}\right)^{m}d\xi}+o(\abs{\log(1-r)}),$$ as $r\to 1^{-}$.
\end{proof}
\noindent Proposition \ref{prop:prop3.1} now follows from a two-step approximation argument. In the first stage, using the Weierstarss Approximation theorem and the Lemma \ref{lemma:lemma3.2}, we prove that for any function $\phi \in C^{\infty}_{c}(\Rplus)$ one has that 
\begin{gather}\label{eqn:A3}
\lim_{r\to 1^{-}}\frac{\Tr \phi(\Gam^{(r)}(\widehat{\gamma}))}{\abs{\log(1-r)}}=\frac{1}{2\pi}\int_{\bbR}{\phi\left(\frac{1}{\cosh(\pi \eta)}\right)d\eta}.
\end{gather}
In the second, we set $r=e^{-1/N}$ and we note that we can replace $\abs{\log(1-e^{-1/N})}$ with $\log(N)$ in the limits above and that we can write $$\sfn(t; \Gam^{(N)}(\widehat{\gamma}))=\Tr\1_{(t, 1)}(\Gam^{(N)}(\widehat{\gamma})).$$ Choose sequences $\phi_{n}^{\pm} \in C^{\infty}_{c}(\Rplus)$ for which we have 
\begin{gather*}
0\leq \phi^{-}_{n}(x)\leq \1_{(t, 1)}(x)\leq \phi^{+}_{n}(x)\leq 1,\ \ \ \forall x,
\end{gather*}
and  $\phi_{n}^{\pm}(x)\to \1_{(t,1)}(x)$ pointwise in $x$ as $n \to \infty$. From the properties of $\Tr$ and \eqref{eqn:A3} it follows
 \begin{align*}
 \uld_{\tau}(t; \Gam(\widehat{\gamma}))&\leq \frac{1}{2\pi}\int_{\bbR}{\phi_{n}^{+}\left(\frac{1}{\cosh(\pi \eta)}\right)d\eta},\\
 \uld_{\tau}(t; \Gam(\widehat{\gamma}))&\geq \frac{1}{2\pi}\int_{\bbR}{\phi_{n}^{-}\left(\frac{1}{\cosh(\pi \eta)}\right)d\eta}.
 \end{align*}
 Finally, an application of the Dominated Convergence Theorem gives the result.
\end{appendix}
\section*{Acknowledgement}{The content of this paper forms part of the author's PhD thesis, completed at King's College London under the supervision of Prof. A.B. Pushnitski. The author is also grateful to both Dr. M. Gebert and Dr. N. Miheisi for the numerous helpful discussions and numerous remarks on earlier versions of this paper.}
\bibliographystyle{acm}
\bibliography{biblio}
\end{document}